\newtheorem{lemma}{Lemma}%[section]
\newtheorem{proposition}[lemma]{Proposition}
\newtheorem{remark}[lemma]{Remark}
\newtheorem{theorem}[lemma]{Theorem}
\newcommand{\e}{\varepsilon} 
\title{Higher-order non-local gradient theory\\ of phase-transitions}
\author{Margherita Solci\footnote{email: {\tt margherita@uniss.it}}\\ 
{\small DADU, Universit\`a di Sassari,} {\small piazza Duomo 6, 07041 Alghero (Italy)}}
\date{}
\begin{document} 

\maketitle
\begin{abstract} 
We study the asymptotic behaviour of double-well energies perturbed by a higher-order fractional term, which, in the one-dimensional case, take the form
$$
\frac1\e\int_I W(u(x))dx+\e^{2(k+s)-1}\frac{s(1-s)}{2^{1-s}}\int_{I\times I} \frac{|u^{(k)}(x)-u^{(k)}(y)|^2}{|x-y|^{1+2s}} dx\,dy
$$
defined on the higher-order fractional Sobolev space $H^{k+s}(I)$,
where $W$ is a double-well potential, $k\in \mathbb N$ and $s\in(0,1)$ with $k+s>\frac12$. 
We show that these functionals $\Gamma$-converge to a sharp-interface functional with domain $BV(I;\{-1,1\})$ of the form $m_{k+s}\#(S(u))$, with $m_{k+s}$ given by the optimal-profile problem
\begin{eqnarray*}
&&m_{k+s} =\inf\Big\{\int_{\mathbb R} W(v)dx+\frac{s(1-s)}{2^{1-s}}\int_{\mathbb R^2}\frac{|v^{(k)}(x)-v^{(k)}(y)|^2}{|x-y|^{1+2s}} dx\,dy :\\
&& \hskip5cm v\in H^{k+s}_{\rm loc}(\mathbb R), \lim_{x\to\pm\infty}v(x)=\pm1\Big\}.
\end{eqnarray*}
The normalization coefficient $\frac{s(1-s)}{2^{1-s}}$ is such that $m_{k+s}$ interpolates continuously the corresponding $m_k$ defined on standard higher-order Sobolev space $H^k(I)$, obtained by Modica--Mortola in the case $k=1$ \cite{MM}, Fonseca--Mantegazza in the case $k=2$ \cite{FM} and Brusca, Donati and Solci  for $k\ge 3$ \cite{BDS}. The results also extends previous works by Alberti, Bouchitt\'e and Seppecher \cite{ABS}, Savin--Valdinoci \cite{SV} and Palatucci--Vincini \cite{PaVi} in the case $k=0$ and $s\in(\frac12,1)$.
\smallskip

{\bf MSC codes:} 49J45, 35B25, 82B26, 35R11.

{\bf Keywords:} $\Gamma$-convergence, non-convex energies, higher-order fractional Sobolev spaces, phase-transition problems.

\end{abstract}

\section{Introduction}
Variational models of phase transitions consider integrals depending on double-well potentials $W$ with two (or more) minimizers. These simple models, as such admit highly discontinuous unphysical solutions; a selection of minimizers can be ensured by a singular-perturbation approach in which a higher-order term is added, the form of which is sometimes dictated by modeling considerations. In the Cahn--Hilliard theory of phase separations \cite{CH} (but earlier models leading to similar formulations date back to Landau \cite{Landau}, and even to van der Waals \cite{vdW}) the corresponding energy is of the form
$$
\int_\Omega W(u)dx+\e^2\int_\Omega|\nabla u|^2dx,
$$
with a first-order perturbation depending on the small parameter $\e$. Analytically, the description of the solutions for $\e$ small translates in an asymptotic analysis, which leads to a sharp-interface model in which the interaction of the two integral terms gives rise to an interfacial energy separating two phases characterized by the minimization of $W$, and minimum problems for $\e>0$ are approximated by corresponding minimum problems for interfacial energies. This {\em minimal-interface criterion} was conjectured e.g.~by Gurtin  \cite{gurtin}. Its rigorous statement was made possible by the development of the theory of sets of finite perimeter and of functions of bounded variation, which allowed to rigorously define the limit interfacial functionals. The proof of the minimal-interface criterion was then achieved by Modica \cite{Modica}, following earlier results in a seminal paper by Modica and Mortola \cite{MM}, showing the $\Gamma$-convergence of the scaled energies
$$
\frac1\e\int_\Omega W(u)dx+\e\int_\Omega|\nabla u|^2dx
$$
to an interfacial energy, which is simply given by
$$
m_W {\rm Per}(E,\Omega),
$$
where $E$ is a set of finite perimeter, and ${\rm Per}(E,\Omega)$ denotes the perimeter of $E$ in $\Omega$. In the case where the minimizers of $W$ are $-1$ and $1$ (which can be assumed up to a change of variables), minimal functions $u_\e$ can be described as approximating  $u=-1+2\chi_E$, where $E$ is a minimal set.
In this process, an interesting observation is that functions $u_\e$ approximating $\chi_E$ tend to follow an optimal profile in the direction orthogonal to the interface.
This remark is highlighted by the fact that the constant $m_W$ can be expressed as a one-dimensional {\em optimal-profile problem}
$$
m_W=\inf\Big\{\int_{-\infty}^{+\infty}(W(v)+ |v'|^2)dt: v\in H^1_{\rm loc}(\mathbb R),  \lim_{t\to\pm\infty} v(t)=\pm1\Big\}.
$$
The Modica result has been generalized and revisited in many ways and in different contexts, leading also to more complex non one-dimensional descriptions (see e.g.~\cite{MR1051228,MR1286918,MR1036589,MR1097327}). However, even when dealing with higher-dimensional problems, a reduction to the one-dimensional case is often useful to ensure compactness by a slicing argument (see e.g.~\cite{alberti,bln}). We note moreover that the analysis of perturbed double-well energies is the building block for many ``phase-field'' approximation procedures of free-discontinuity problems starting from the seminal work by Ambrosio and Tortorelli \cite{AT} (see \cite{bln} for a presentation of different approximations).

\smallskip
The higher-order perturbation term of the double-well energy can be taken of different forms involving {\em higher-order derivatives}, justified either by the necessity of higher regularity (useful for the approximation procedures mentioned above), or by modeling assumptions (e.g.~from atomistic theories with long-range interactions as in \cite{BCST}). In this case the energies take the form
$$
\int_\Omega W(u)dx+\e^{2k}\int_\Omega\|\nabla^{(k)} u\|^2dx,
$$
the second term denoting a norm of the $k$-th order tensor of derivatives of order $k$. Such functionals have been considered by Fonseca and Mantegazza \cite{FM} for $k=2$ and by Brusca, Donati, and Solci \cite{BDS} for arbitrary $k$. They showed that, taking $\|\cdot\|$ the operator norm, the scaled functionals 
$$
\frac1\e\int_\Omega W(u)dx+\e^{2k-1}\int_\Omega\|\nabla^{(k)} u\|^2dx,
$$
still $\Gamma$-converge to a sharp-interface energy $m_k{\rm Per}(E,\Omega)$, where now the (one-dimensional) optimal-profile problem giving $m_k$ is
\begin{equation}\label{mint-k}
m_k=\inf\Big\{\int_{-\infty}^{+\infty}(W(v)+ |v^{(k)}|^2)dt: v\in H^k_{\rm loc}(\mathbb R),  \lim_{t\to\pm\infty} v(t)=\pm1\Big\}.
\end{equation}
We note that in this case even the proof of the inequality $m_k>0$ is not trivial if $k\neq 1$. 

In a different direction, it is also natural in some problems to consider perturbations of the form 
$$
\int_\Omega W(u)dx+\e^{2s}\int_{\Omega\times\Omega} \frac{|u(x)-u(y)|^2}{|x-y|^{d+2s}} dx\,dy,
$$
where $s\in(0,1)$ and $\Omega\subset \mathbb R^d$. The additional term is a Gagliardo {\em fractional $s$-seminorm}, which can appear as a boundary term from a higher-dimensional problem (see e.g.~\cite{ABS-2}). Under different scaling, these problems have been studied by Savin and Valdinoci \cite{SV} (see also Palatucci and Vincini \cite{PaVi}, earlier results by Alberti, Bouchitt\'e and Seppecher \cite{ABS} and Alberti and Bellettini \cite{AB}, and a recent review paper by Dipierro and Valdinoci \cite{DipVal}). In particular if $2s>1$ they note that the scaled energies
$$
\frac1\e\int_\Omega W(u)dx+\e^{2s-1}\int_{\Omega\times\Omega} \frac{|u(x)-u(y)|^2}{|x-y|^{d+2s}} dx\,dy,
$$
still define a singularly-perturbed functional, and show their convergence to a sharp-interface functional. 

\smallskip
In this paper we give a complete description which comprises both the higher-order and the fractional cases. We restrict our analysis to the one-dimensional case, which contains the relevant new technical ideas for the proofs. The treatment of the $d$-dimensional case is postponed to future work, since it requires additional techniques such as slicing (see \cite{bln,alberti}), which allows one to obtain coerciveness from the one-dimensional analysis, and blow-up (see \cite{FMu}), which leads to a description of the surface energy density as a limit of minimum problems on cubes of diverging size (see in particular the work by Dal Maso, Fonseca and Leoni \cite{MR3748585} where the representation formula is obtained for a similar problem). The higher-dimensional issues are overall standard but rather complex and technically challenging, and need the use of some adaptations to the fractional case (see for example the fractional slicing result \cite[Theorem 6.47]{leofrac}). Note that in the higher-dimensional case in the limit we will have an anisotropic perimeter functional 
depending on the norm that we choose in the terms of the form $\|D^{(k)}u(x)-D^{(k)}u(y)\|$.

The main result of the paper is the following.
We consider energies defined in the higher-order fractional Sobolev space $H^{k+s}$ (see e.g.~the recent monograph \cite{leofrac} and the `guide' \cite{DPV}). We suppose that $W$ is a non-negative continuous function with minima in $-1$ and $1$, and larger than a constant at infinity.
For fixed $k\in \mathbb N$ and $s\in(0,1)$ with $k+s>\frac12$ (so that the non-local part is still a singular perturbation after scaling by $1/\e$) and a bounded interval $I$, we consider $F_\e$ defined by
$$
F_\e(u)=\frac1\e\int_I W(u(x))dx+\e^{2(k+s)-1}\int_{I\times I} \frac{|u^{(k)}(x)-u^{(k)}(y)|^2}{|x-y|^{1+2s}} dx\,dy,
$$
and show that the $\Gamma$-limit of $F_\e$ as $\e\to 0$ with respect to the convergence in measure and in $L^1(0,1)$ is the sharp-interface functional given by 
$F(u)= m_{k+s}\# S(u)$, where $S(u)$ is the set of discontinuity points of $u$ defined on $BV(I;\{-1,1\})$, with
\begin{eqnarray}\label{emme}\nonumber
&&m_{k+s} =\inf\Big\{\int_{\mathbb R} W(v)dx+\int_{\mathbb R^2}\frac{|v^{(k)}(x)-v^{(k)}(y)|^2}{|x-y|^{1+2s}} dx\,dy :\\
&& \hskip4.5cm v\in H^{k+s}_{\rm loc}(\mathbb R), \lim_{x\to\pm\infty}v(x)=\pm1\Big\}
\end{eqnarray}
defining a strictly positive constant.

\smallskip
The proofs of the result mainly rely on the interpolation techniques first developed in \cite{solci} to treat the more complex case of singular perturbations producing free-discontinuity problems, and subsequently adapted to phase-transition problems in \cite{BDS}. Since the technical results in those papers are tailored for local integral energies, they must be modified for the present situation with non-trivial changes. The techniques mainly focus on estimating the size of intervals where functions are close to $\pm 1$, and at the same time have large derivatives of all orders between $1$ and $k$, in terms of the energy on the corresponding interval. Since this energy contains the $s$-seminorm of the $k$-th derivative and an integral that can be estimated by the $L^2$-distance from a well, suitable interpolation inequalities must be proved in terms of these quantities. These estimates allow to provide bounds in terms of the whole $H^{k+s}$-norm, showing that either a function oscillates, and hence contains points where derivatives of each order vanish, or has small derivative of all orders at some point close to the wells. In both cases, we can use a Poincar\'e-inequality argument to bound the total $H^{k+s}$-norm (and hence ensure that compactness properties hold). This allows one to bound the number of transitions between the wells along sequences $u_\e$ for which  $F_\e(u_\e)$ is equibounded, and hence to prove the equicoerciveness of the energies. Note that, differently from \cite{BDS}, where the existence of points with small derivatives and close to the wells was used to extend functions outside an interval to prove a lower bound of the energies, in the non-local case such an argument is not sufficient, and we use instead a cut-off argument together with an extensive use of interpolation inequalities. 

We have taken care to use minimal hypotheses on $W$; namely only that it has a quadratic behaviour at $\pm1$ and is bounded at infinity. The same arguments can be applied if $W(z)$ behaves as $||z|-1|^q$ close to $\pm1$ with $q>1$, by modifying the interpolation inequalities. We have limited our analysis to $q=2$ for ease of notation. Moreover, the compactness result is trivially improved to convergence in some $L^p$ if $W$ has a polynomial growth at infinity.

\medskip
If we compare the fractional case as stated above with the integer case, we note a divergence issue at integer points,
highlighted by the fact that  for each $k$ the function $s\mapsto m_{k+s}$, even though continuous on $(0,1)$, diverges as $s$ tends to $0^+$ or $1^-$. This is a consequence of results by Maz'ya--Shaposhnikova \cite{MS} (see also \cite[Section 1]{CKP}) and by Bourgain, Brezis and Mironescu \cite{BBM} (see also Ponce \cite{ponce}) which, reformulated for $k$-th derivatives, state that
the functionals 
$$
G_s(u)=s(1-s) \int_{I\times I}\frac{|u^{(k)}(x)-u^{(k)}(y)|^2}{|x-y|^{1+2s}} dx\,dy
$$
$\Gamma$-converge to $2\int_I |u^{(k)}|^2dx$ and to $\int_I |u^{(k+1)}|^2dx$ in the $L^2$-topology, as $s\to 0^+$ and $s\to 1^-$, respectively. With this remark in mind, we can correct this dependence by modifying  $F_\e$ above  for $s\in(0,1)$, setting
$$
F^{k,s}_\e(u)=\frac1\e\int_I W(u(x))dx+\frac{s(1-s)}{2^{1-s}}\e^{2(k+s)-1}\int_{I\times I}\frac{|u^{(k)}(x)-u^{(k)}(y)|^2}{|x-y|^{1+2s}} dx\,dy.
$$ 
These functionals are of the same form as $F_\e$ and their $\Gamma$-limit has the coefficient
\begin{eqnarray}\label{emmeks}\nonumber
&& m_{k}(s) =\inf\Big\{\int_{\mathbb R} W(v)dx+\frac{s(1-s)}{2^{1-s}}\int_{\mathbb R^2}\frac{|u^{(k)}(x)-u^{(k)}(y)|^2}{|x-y|^{1+2s}} dx\,dy,
 :\\
&&\hskip3cm v\in H^{k+s}_{\rm loc}(\mathbb R), \lim_{x\to\pm\infty}v(x)=\pm1\Big\}.
\end{eqnarray}
With this correction, the above-mentioned results by Maz'ya--Shaposhnikova \cite{MS} and Ponce \cite{ponce}, together with a compactness argument in which we again use the interpolation techniques, ensure that $m_{k}(s)\to m_k$ as $s\to 0^+$ and $m_{k}(s)\to m_{k+1}$ as $s\to 1^-$, respectively, where $m_k$ is defined by \eqref{mint-k} on integer $k$, thus obtaining a continuous function on $(\frac12,+\infty)$. We note that the same correction argument for $m_s$ in the particular case $k=0$ and $s\to 1^-$ can be found in \cite{PaVi}.

\section{Setting of the problem}
We preliminarily introduce some notation. We will use standard notation for  Sobolev and Lebesgue spaces, and for spaces of differentiable functions. In particular we will use $\|u\|_{L^p(I)}$ and $\|u\|_{C^k(I)}$ for the norm of $u$ in the corresponding spaces. The space $BV(I;\{-1,1\})$ is the set of piecewise-constant function on $I$ taking only the values $-1$ and $1$. For such a function $S(u)$ will denote its set of discontinuity points.
The letter $C$ will denote a generic strictly positive constant depending only on fixed parameters.

\smallskip
We will deal with singular-perturbation problems defined in fractional Sobo\-lev spaces $H^{r}$, with $r>0$ for whose definition, notation and main properties we refer to \cite{leofrac}. For ease of notation we will write $r=k+s$ with $k\in\mathbb N$ and $s\in[0,1)$. We recall that if $I$ is an interval and $s\in(0,1)$, a function $u$ belongs to $H^{k+s}(I)$ if $u$ belongs to the classical Sobolev space $H^k(I)$ and $[u]^2_{k+s}(I) <+\infty$, where
$$
[u]^2_{k+s}(I) = \int_{I\times I} \frac{|u^{(k)}(x)-u^{(k)}(y)|^2}{|x-y|^{1+2s}}dxdy.
$$
Note that $[u]^2_{k+s}(I)= [u^{(k)}]_s(I)$. If $s=0$ we set $[u]^2_k(I)=\int_I|u^{(k)}|^2dx$. 
\bigskip

Let $W\colon\mathbb R\to [0,+\infty)$ be a usual {\em double-well potential}; that is, a non-negative continuous function with $W(z)=0$ only if $z\in\{-1,1\}$.  As growth conditions are concerned, we only require that there exist $\alpha_W,\beta_W>0$ such that
\medskip 

\centerline{\hfill $W(z)\geq \alpha_W\min\{(z+1)^2,(z-1)^2, \beta_W\}$\hfill(H1) }

\medskip
\noindent  for all $z\in\mathbb R$, and that there exists $\overline \eta>0$ and $\gamma_W$ such that\medskip 

\centerline{\hfill $W(z)\le \gamma_W \min\{(z+1)^2,(z-1)^2\}$ \hfill(H2) }

\medskip\noindent  if $||z|-1|\le \overline \eta$.

Conditions (H1) and (H2) are satisfied if there exist $W''(1), W''(-1)>0$, and $W$ is larger than a positive constant at infinity.

\medskip
Our main result is the following. 
Note that the condition $k+s>\frac12$ ensures that the coefficient of the
perturbation tends to $0$ as $\e\to0$. 

\begin{theorem}\label{main} Let $k+s>\frac12$ and let $F_\e$ be defined by
$$
F_\e(u)=\frac1\e\int_I W(u(x))dx+\e^{2(k+s)-1}[u]^2_{k+s}(I).
$$
Then the $\Gamma$-limit of $F_\e$ as $\e\to 0$ with respect to the convergence in measure and in $L^1(0,1)$ is the sharp-interface functional
$F(u)= m_{k+s}\# S(u)$ defined on $BV(I;\{-1,1\})$, with
\begin{equation}\label{emme}
m_{k+s} =\inf\Big\{\int_{\mathbb R} W(v)dx+[v]^2_{k+s}(\mathbb R) : v\in H^{k+s}_{\rm loc}(\mathbb R), \lim_{x\to\pm\infty}v(x)=\pm1\Big\}.
\end{equation}
\end{theorem}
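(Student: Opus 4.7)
The plan is the standard $\Gamma$-convergence tripartite: a liminf inequality, a limsup construction, and an equi-coerciveness (compactness) result. The condition $\int_I W(u_\e)\,dx \le C\e$ immediately forces any $L^1$-cluster point of a bounded-energy sequence to be $\{-1,1\}$-valued a.e., so the nontrivial content is sharpening the energy expansion to the constant $m_{k+s}$ per transition and controlling the number of transitions.

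For the $\Gamma$-liminf, I would localize around each jump point $x_i$ of the limit $u \in BV(I;\{-1,1\})$: on disjoint intervals $J_i = (x_i-\delta, x_i+\delta)$ introduce the natural rescaling $v^i_\e(y) := u_\e(x_i + \e y)$. A direct change of variables (using $u_\e^{(k)}(x) = \e^{-k}(v^i_\e)^{(k)}((x-x_i)/\e)$) yields the exact scaling identity
\begin{equation*}
F_\e(u_\e, J_i) = \int_{J_i/\e} W(v^i_\e)\,dy + [v^i_\e]^2_{k+s}(J_i/\e).
\end{equation*}
Since $u$ jumps between $-1$ and $+1$ at $x_i$, one must exhibit points $y^\pm_\e \in J_i/\e$ with $y^\pm_\e \to \pm\infty$ at which $v^i_\e$ is close to $\pm 1$ and each intermediate derivative $(v^i_\e)^{(j)}$, $1 \le j \le k$, is small; this selection is the place where the fractional Gagliardo--Nirenberg-type interpolation inequalities enter. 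A cut-off then extends $v^i_\e$ to a competitor $\tilde v_\e \in H^{k+s}_{\rm loc}(\mathbb R)$ matching $\pm 1$ at infinity with only $o(1)$ additional energy; the cost of the cut-off in the non-local seminorm is what makes this step more delicate than in \cite{BDS}. Comparison with \eqref{emme} gives $\liminf_\e F_\e(u_\e, J_i) \ge m_{k+s}$, and summing over jumps and letting $\delta\to 0$ closes the lower bound.

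For the $\Gamma$-limsup, fix $\eta>0$ and choose a near-optimal profile $w_\eta \in H^{k+s}_{\rm loc}(\mathbb R)$ for $m_{k+s}$. A further cut-off (again controlled by interpolation between $W$ and $[\cdot]_{k+s}$) produces $\tilde w_\eta$ equal to $\pm 1$ outside a fixed compact $[-R_\eta, R_\eta]$ with total energy at most $m_{k+s}+2\eta$. Place a rescaled copy of $\tilde w_\eta$, with the correct orientation, at each jump $x_i$, and set $u_\e\equiv\pm 1$ on the plateaus in between; for $\e$ small the transitions are disjoint. The qualitatively new ingredient is the cross-interaction term in the non-local seminorm between two distinct rescaled transitions, but since the transitions are separated by an order-one distance while living on scale $\e R_\eta$, this contribution is $O(\e^{2k+2s-1} R_\eta^{?})$ and vanishes as $\e \to 0$ at fixed $\eta$. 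A diagonal argument on $\eta \to 0$ completes the construction.

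Compactness is the principal obstacle and carries the main technical novelty. The energy only controls the $L^2$-distance of $u_\e$ from the wells (through $W$) and the fractional seminorm $[u_\e^{(k)}]_s$ of the top derivative; every intermediate derivative $u_\e^{(j)}$, $1\le j\le k-1$, must be bounded in terms of these two quantities. I would develop fractional interpolation inequalities (adapting the local ones of \cite{solci,BDS} to the Gagliardo seminorm via its intrinsic Littlewood--Paley/mollification structure) showing that on any subinterval $J$ either $u_\e$ has an interior point at which all derivatives $u_\e^{(j)}$, $j=1,\dots,k$, are small and $u_\e$ is close to a well, or a Poincaré-type inequality on $J$ already forces $F_\e(u_\e, J) \ge c_0 > 0$. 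An extraction/covering argument then shows each full well-to-well transition costs energy at least $m_{k+s}-o(1)$, bounding the number of transitions uniformly in $\e$ and upgrading the a.e.\ convergence to $L^1$- and measure-convergence with limit in $BV(I;\{-1,1\})$.
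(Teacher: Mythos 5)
Your plan reproduces the paper's architecture: fractional interpolation inequalities to control intermediate derivatives (Proposition~\ref{interpolazione}, Lemmas~\ref{stimaseminorma}--\ref{datibordo}), a transition-counting compactness argument (Proposition~\ref{numbound}), a cut-off construction around each jump for the liminf, and rescaled compactly-supported near-optimal profiles for the limsup. Two technical points, however, are more delicate than your sketch suggests. First, the cut-off does not add a mere $o(1)$ to the localized energy $F_\e(u_\e;J_i)$: applying the Leibniz rule to $(u_\e\mp1)\phi_\e$ in the Gagliardo seminorm inevitably produces a multiplicative $(1+\delta)$ loss together with cross terms of the form $u_\e^{(\ell)}\phi_\e^{(k-\ell)}$; the paper controls these by taking the cut-off at scale $\e^s$ (not $\e$), choosing the cut-off window among $N$ candidate subintervals so that the error terms carry a factor $C/N$, and only then sending $\delta\to0$ and $N\to\infty$. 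Your phrase ``$o(1)$ additional energy'' elides exactly the part where the non-locality bites. Second, the cross-interaction between distinct rescaled transitions in the recovery sequence is only directly negligible when $s>\tfrac12$ (where $\e^{2s-1}\|v^{(k)}\|_\infty^2/\delta^{2s-1}\to0$); for general $k+s>\tfrac12$ with $s\le\tfrac12$ one must split the double integral and use that $v^{(k)}$ vanishes off a compact set, so your $O(\e^{2k+2s-1}R_\eta^{?})$ bound needs to be worked out case by case. Finally, the paper packages the cut-off step by introducing the auxiliary constant $\widetilde m_{k+s}$ (optimal profiles equal to $\pm1$ outside a compact) and proving separately that $\widetilde m_{k+s}=m_{k+s}$ --- a cleaner organization of the ``further cut-off'' you place in the limsup. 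None of this changes your strategy, but these are precisely the places where the non-local setting departs from the integer-order case of Brusca--Donati--Solci.
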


This theorem is complemented by the corresponding {\em equi-coerciveness result} Theorem \ref{compth}. 

\begin{remark}[convergence of minimum problems]\rm
If $W(z)\ge c_1|z|^p-c_2$ for some positive constants $c_1,c_2$ and $p\ge1$, then from the coerciveness in measure we deduce that the functionals $F_\e$ are equi-coercive in $L^1(I)$. As a consequence, we deduce the convergence of minimum problems of the form
$$
\min
\bigg\{F_\e(u)-\int_Ifu\,dt\bigg\}\qquad \hbox { and }\qquad\min\bigg\{F_\e(u): \int_Iu\,dt=m\bigg\},
$$
with $f\in L^{p'}(I)$ and $m\in(0,|I|)$, to the corresponding problems for $F$.  Analogously, one obtains the convergence with suitably defined boundary data (see \cite{SV}).
\end{remark}

\section{Preliminary results} 
 In this section we gather some results that will be used in the proofs. The first part contains interpolation results for fractional Sobolev spaces, the second part contains properties of sequences of functions with bounded energies $F_\e$  deriving from those interpolation results.

\subsection{Interpolation results in fractional Soboles spaces}

The following two propositions are derived from known results in the theory of fractional Sobolev spaces.

\begin{proposition}[Interpolation inequality]\label{interpolazione} 
Let $k\in \mathbb N$, $k\geq 1$ and $s\in(0,1)$. 
There exists a constant $R_{k,s}>0$ such that for all 
$I\subset \mathbb R$ bounded interval, and $u\in H^{k+s}(I)$, the interpolation inequality 
\begin{equation}\label{interpolazione-eq} 
\|u^{(\ell)}\|_{L^2(I)}\leq R_{k,s}\big(|I|^{-\ell}\|u\|_{L^2(I)}+\|u\|^\theta_{L^2(I)}[u]^{1-\theta}_{k+s}(I) \big) 
\end{equation} 
holds for all $\ell\in \{1,\dots, k\}$, where $\theta=1-\frac{\ell}{k+s}$.  
\end{proposition}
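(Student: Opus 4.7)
My approach to this Gagliardo--Nirenberg type interpolation inequality is to reduce first to the unit interval by scaling, then to the whole line by extension, and finally to invoke the Fourier-analytic Gagliardo--Nirenberg inequality on $\mathbb R$.

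\emph{Step 1 (scaling).} Write $I=(a,a+|I|)$ and set $v(y)=u(a+|I|y)$ for $y\in I_0:=(0,1)$. A direct computation, changing variables also in the double integral defining $[\cdot]_{k+s}$, gives
\[
\|v\|_{L^2(I_0)}=|I|^{-1/2}\|u\|_{L^2(I)},\quad \|v^{(\ell)}\|_{L^2(I_0)}=|I|^{\ell-1/2}\|u^{(\ell)}\|_{L^2(I)},\quad [v]_{k+s}(I_0)=|I|^{k+s-1/2}[u]_{k+s}(I).
\]
The defining relation $\theta=1-\ell/(k+s)$ is equivalent to $(1-\theta)(k+s)=\ell$, and a short exponent count shows that both terms on the right-hand side of (\ref{interpolazione-eq}) then scale with the same power of $|I|$ as the left-hand side. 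It therefore suffices to prove the inequality on $I_0$ with the prefactor $|I|^{-\ell}$ replaced by $1$.

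\emph{Step 2 (extension and Fourier estimate).} Fix an extension operator $E\colon H^{k+s}(I_0)\to H^{k+s}(\mathbb R)$, whose existence for a bounded interval is classical (see \cite{leofrac}), satisfying the two separate bounds
\[
\|Eu\|_{L^2(\mathbb R)}\le C\|u\|_{L^2(I_0)},\qquad [Eu]_{k+s}(\mathbb R)\le C\bigl(\|u\|_{L^2(I_0)}+[u]_{k+s}(I_0)\bigr).
\]
For $w\in H^{k+s}(\mathbb R)$, Plancherel gives $\|w^{(\ell)}\|_{L^2(\mathbb R)}^2=\int|\xi|^{2\ell}|\widehat w(\xi)|^2\,d\xi$ and $[w]_{k+s}^2(\mathbb R)=c_s\int|\xi|^{2(k+s)}|\widehat w(\xi)|^2\,d\xi$. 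Since $2\ell=2(1-\theta)(k+s)$, H\"older's inequality with conjugate exponents $1/\theta$ and $1/(1-\theta)$ yields
\[
\|w^{(\ell)}\|_{L^2(\mathbb R)}\le C\|w\|_{L^2(\mathbb R)}^{\theta}\,[w]_{k+s}^{1-\theta}(\mathbb R).
\]

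\emph{Step 3 (conclusion).} Applying this to $w=Eu$, restricting the left-hand side from $\mathbb R$ to $I_0$, inserting the extension bounds, and using the subadditivity $(a+b)^{1-\theta}\le a^{1-\theta}+b^{1-\theta}$ (valid since $0<1-\theta<1$) yield
\[
\|u^{(\ell)}\|_{L^2(I_0)}\le C\|u\|_{L^2(I_0)}^{\theta}\bigl(\|u\|_{L^2(I_0)}^{1-\theta}+[u]_{k+s}^{1-\theta}(I_0)\bigr)=C\bigl(\|u\|_{L^2(I_0)}+\|u\|_{L^2(I_0)}^{\theta}[u]_{k+s}^{1-\theta}(I_0)\bigr),
\]
and undoing the rescaling of Step 1 recovers (\ref{interpolazione-eq}). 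The main point requiring care is the existence of an extension operator satisfying the two separate bounds above, in particular preserving the $L^2$ norm up to a constant independent of $[u]_{k+s}$; once this is granted, the remainder is essentially a one-line H\"older estimate in frequency space together with scaling bookkeeping. An alternative route that bypasses the explicit extension would be to invoke real interpolation $(L^2,H^{k+s})_{1-\theta,2}=H^\ell$ on bounded intervals, but tracking the $|I|^{-\ell}$ dependence is less transparent than through the scaling argument above.
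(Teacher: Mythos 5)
Your proposal takes a genuinely different route from the paper's: the paper simply invokes a parametrised interpolation inequality (Theorem 5.8 and Exercise 5.9 in \cite{leofrac}), of the form $\|u^{(\ell)}\|_{L^2(I)}\le C(t^{-\ell}\|u\|_{L^2(I)}+t^{k+s-\ell}[u^{(k)}]_s(I))$ for $0<t\le|I|$, and then optimises over $t$; you instead give a self-contained scaling--extension--Fourier argument. Your scaling bookkeeping and the Plancherel/H\"older step on $\mathbb R$ are correct (the exponent identity $(1-\theta)(k+s)=\ell$ makes the frequency-side H\"older go through cleanly, and $(a+b)^{1-\theta}\le a^{1-\theta}+b^{1-\theta}$ handles the sum in the extension bound).

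However, there is a real gap in Step 2 that you yourself flag but do not resolve: the claimed second bound on the extension operator,
\[
[Eu]_{k+s}(\mathbb R)\le C\bigl(\|u\|_{L^2(I_0)}+[u]_{k+s}(I_0)\bigr),
\]
is \emph{not} what the standard extension theorem (Stein, Hestenes, or the one in \cite{leofrac}) delivers. What those results give is $\|Eu\|_{H^{k+s}(\mathbb R)}\le C\|u\|_{H^{k+s}(I_0)}$, where the right-hand side is the \emph{full} norm containing $\sum_{j=0}^{k}\|u^{(j)}\|_{L^2(I_0)}$ in addition to $[u]_{k+s}(I_0)$. Passing from the full norm to $\|u\|_{L^2}+[u]_{k+s}$ is precisely the equivalence-of-norms theorem on a bounded interval, which is equivalent to the interpolation inequality you are proving; cited as you did, the argument is circular. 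Concretely, a Hestenes-type extension or a cutoff argument inevitably produces cross terms involving $\|u^{(j)}\|_{L^2(I_0)}$ for $1\le j\le k$ in the seminorm estimate, and there is no way to eliminate them a priori.

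The gap is fixable without changing the structure. Use the standard extension bound $[Eu]_{k+s}(\mathbb R)\le C\bigl(\sum_{j=0}^k\|u^{(j)}\|_{L^2(I_0)}+[u]_{k+s}(I_0)\bigr)$, feed it into your Fourier/H\"older estimate to obtain, for every $\ell\in\{1,\dots,k\}$,
\[
\|u^{(\ell)}\|_{L^2(I_0)}\le C\|u\|_{L^2(I_0)}^{\theta}\Bigl(\sum_{j=0}^k\|u^{(j)}\|_{L^2(I_0)}+[u]_{k+s}(I_0)\Bigr)^{1-\theta},
\]
then apply Young's inequality $a^\theta b^{1-\theta}\le\varepsilon b+C_\varepsilon a$ and sum over $\ell$ to absorb $\sum_{j=1}^k\|u^{(j)}\|_{L^2(I_0)}$ into the left-hand side (choosing $\varepsilon$ small depending only on $k$). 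Plugging the resulting bound $\sum_{j=1}^k\|u^{(j)}\|_{L^2(I_0)}\le C(\|u\|_{L^2(I_0)}+[u]_{k+s}(I_0))$ back into the displayed estimate then yields \eqref{interpolazione-eq} on $I_0$, and your scaling step finishes the proof. With this absorption step included, the argument is complete and nicely illuminates why the exponents $\theta$, $1-\theta$ appear; without it, the proof as written quietly assumes what it sets out to prove.
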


\begin{proof} Theorem 5.8 and Exercise 5.9 in \cite{leofrac} show that 
$$
\|u^{(\ell)}\|_{L^2(I)}\leq C\big(t^{-\ell}\|u\|_{L^2(I)}+t^{k-\ell}[u^{(k)}]_{s}(I) \big)
$$
for all $0<t\le |I|$.
An algebraic manipulation of this inequality (see e.g.~\cite[Exercise 1.26(ii)]{leofrac}) gives \eqref{interpolazione-eq}.
\end{proof}

\begin{lemma}[Estimate of $\|u\|_{L^2}$ with the $s$-seminorm]
\label{stimaseminorma}
There exists $C>0$ such that 
for any $I\subset \mathbb R$ open bounded interval and $u\in H^s(I)$ the following inequality holds: 
\begin{equation}\label{stimaseminormaeq} 
\|u\|_{L^2(I)}\leq C\Big(|I|^{-\frac{1}{2}}\bigg|\int_J u(x)\, dx\bigg| +|I|^s [u]_{s}(I)\Big),  
\end{equation} 
where $J\subseteq I$ is any interval such that $2|J|\geq |I|$.  
\end{lemma}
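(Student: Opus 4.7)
The plan is to reduce to a fractional Poincaré-type inequality by splitting $u$ into its average over $J$ and the oscillation around it. Let $\bar u_J=\frac{1}{|J|}\int_J u\,dx$. By the triangle inequality,
$$
\|u\|_{L^2(I)}\le \|u-\bar u_J\|_{L^2(I)}+|I|^{1/2}|\bar u_J|.
$$
The second term is controlled directly by the hypothesis $2|J|\ge|I|$: we have $|\bar u_J|\le \frac{1}{|J|}|\int_J u\,dx|\le \frac{2}{|I|}|\int_J u\,dx|$, hence $|I|^{1/2}|\bar u_J|\le 2|I|^{-1/2}|\int_J u\,dx|$, which is the first term on the right-hand side of \eqref{stimaseminormaeq}.

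For the oscillation term I would use the standard averaging trick: for $x\in I$, write
$$
u(x)-\bar u_J=\frac{1}{|J|}\int_J\bigl(u(x)-u(y)\bigr)dy,
$$
and apply Jensen's inequality to get $|u(x)-\bar u_J|^2\le \frac{1}{|J|}\int_J|u(x)-u(y)|^2\,dy$. Integrating over $I$ and using Fubini,
$$
\|u-\bar u_J\|_{L^2(I)}^2\le \frac{1}{|J|}\int_{I\times J}|u(x)-u(y)|^2\,dy\,dx.
$$
Since $J\subseteq I$ and $|x-y|\le|I|$ for $x,y\in I$, the factor $|x-y|^{1+2s}$ can be inserted and bounded by $|I|^{1+2s}$, giving
$$
\|u-\bar u_J\|_{L^2(I)}^2\le \frac{|I|^{1+2s}}{|J|}\,[u]_s^2(I)\le 2|I|^{2s}[u]_s^2(I),
$$
where in the last step I again used $2|J|\ge|I|$. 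Taking square roots and summing the two estimates yields \eqref{stimaseminormaeq} with $C=2$ (or any constant $\ge\max\{\sqrt 2,2\}$).

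There is no real obstacle here: the argument is just the standard fractional Poincaré inequality combined with the elementary bound on the mean value. The only point to keep in mind is that the hypothesis $2|J|\ge|I|$ is used twice, once to estimate the mean $\bar u_J$ in terms of $|I|$ and once to absorb the factor $|I|/|J|$ coming from Jensen's inequality.
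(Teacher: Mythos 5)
Your proof is correct and takes a genuinely different, and in fact more elementary, route than the paper. The paper proves \eqref{stimaseminormaeq} by first invoking a fractional Sobolev embedding (Corollary~2.3 of \cite{leofrac}) to bound the $L^p$ norm of $u$ with $p=\frac{2}{1-2s'}$ for an auxiliary exponent $s'<\min\{\frac12,s\}$, then applying H\"older's inequality to pass from $L^p$ to $L^2$, and finally using a monotonicity lemma (Lemma~2.6 in \cite{leofrac}) for Gagliardo seminorms, $[u]_{s'}(I)\le |I|^{s-s'}[u]_s(I)$, to trade the auxiliary exponent $s'$ for $s$. Your argument avoids all of this machinery: you split $u$ into its mean over $J$ and the oscillation, control the mean directly from $2|J|\ge |I|$, and estimate the oscillation by the standard fractional Poincar\'e argument (Jensen, Fubini, and the trivial bound $|x-y|\le |I|$ inserted into the Gagliardo kernel). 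This is shorter, self-contained, yields an explicit constant $C=2$, and does not require any citation to the Sobolev embedding theory; it is simply the $L^2$-case of the fractional Poincar\'e inequality, which does not need the detour through $L^p$. The paper's route would be the natural one if a scaling-invariant $L^p$ conclusion were wanted, but for the $L^2$ statement as given here your approach is preferable.
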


\begin{proof} 
Let $s^\prime<\min\{\frac{1}{2},s\}$ be fixed, and let $p=p(s^\prime)=\frac{2}{1-2s^\prime}$. 
Then (see \cite[Corollary 2.3]{leofrac}) there exists a positive constant $C$ such that for any $I\subset \mathbb R$ an open bounded interval and $u\in H^s(I)$ we can estimate 
\begin{equation}\label{stimaseminormacor} 
\|u\|_{L^{p}(I)}\leq C\Big(|I|^{\frac{1}{p}-1}\Big|\int_J u(x)\, dx\Big| + [u]_{s^\prime}(I)\Big),  
\end{equation} 
where $J\subseteq I$ is any interval such that $2|J|\geq |I|$.  
By H\"older's inequality we get 
$$\|u\|^2_{L^2(I)}\leq |I|^{2s^\prime}\|u\|^2_{L^{p}(I)},$$ 
so that 
\begin{eqnarray*}%\label{stimaseminormacor2} 
\|u\|_{L^2(I)}&\leq&C|I|^{s^\prime}\bigg(|I|^{\frac{1}{p}-1}\Big|\int_J u(x)\, dx\Big| + [u]_{s^\prime}(I)\bigg)\\
&=& C\bigg(|I|^{-\frac{1}{2}}\Big|\int_J u(x)\, dx\Big| +|I|^{s^\prime} [u]_{s^\prime}(I)\bigg). 
\end{eqnarray*} 
Lemma 2.6 in \cite{leofrac} ensures that 
$[u]_{s^\prime}(I)\leq |I|^{s-s^\prime}[u]_{s}(I)$,
and the proof of the claim is complete. 
\end{proof}

\subsection{Analysis of functions with values close to the wells}
This section contains the main technical results that will be used in the sequel.
For ease of notation, we will consider functions defined on $(0,1)$, for which we drop the dependence on the interval, simply writing $[u]_{k+s}$ in the place of $[u]_{k+s}(0,1)$. All results will be independent of this choice.

The letter $I$ will be then used for a generic interval $I\subset \mathbb R$, for which we set
$$
F_\e(u;I)=\frac1\e\int_I W(u(x))dx+\e^{2(k+s)-1}[u]^2_{k+s}(I).
$$

\smallskip
Let $\{u_\e\}$ be a sequence of functions in $H^{k+s}(0,1)$ such that 
\begin{equation}\label{defS}
\sup_{\e>0}F_\e(u_\e)=S<+\infty. 
\end{equation}
We can suppose that $u_\e\in H^{k+s}(0,1)\cap C^\infty([0,1])$ by the density of such functions with respect to the convergence in $H^{k+s}(0,1)$ (see e.g.~\cite[Theorem 1.64]{leofrac}). 

\smallskip
With fixed $\eta\in(0,\min\{1,\sqrt{\beta_W}\})$ we set 
\begin{equation}\label{defAe}
A_\e^\eta=\{t\in(0,1): \big||u_\e(t)|-1\big|<\eta\}. 
\end{equation}
Hypothesis (H1) implies that we have 
$$W(z)\geq \alpha_W(z-1)^2 \ \hbox{\rm in }[1-\eta,1+\eta] \ \  \hbox{\rm and} \ \ W(z)\geq \alpha_W(z+1)^2 \ \hbox{\rm in }[-1-\eta,-1+\eta]$$
for all $\eta\in (0,\min\{1,\sqrt{\beta_W}\})$.
Hence also
\begin{equation}\label{mis} 
|\{t\in(0,1): ||u_\e(t)|-1|\geq \eta \}|\leq \frac{S\e }{\alpha_W}\Big(\frac{1}{\eta^2}+\frac{1}{\beta_W}\Big)<\frac{2S\e }{\alpha_W\eta^2}.
\end{equation}

\begin{lemma}[Bound on the derivatives]\label{stimaintervallo} 
Let $k\neq 0$ and $\eta\in(0,\min\{1,\sqrt{\beta_W}\})$ be fixed. 
%and let $N\geq 1$ be fixed. 
Then there exists a constant $R=\widehat R_{k,s}>0$ such that for any interval $I\subset A_\e^\eta$ with $|I|\geq R\e$ we have 
$$\Big|\Big\{t\in I: |u_\e^{(\ell)}(t)|<\frac{1}{\e^{\ell}} \ \hbox{\rm for all } \ell\in\{1,\dots, k\}\Big\}\Big|>0.$$
\end{lemma}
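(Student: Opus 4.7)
My plan is to apply the interpolation inequality from Proposition~\ref{interpolazione} to $u_\e$ translated to vanish at the relevant well, use the hypothesis (H1) and the energy bound to control both the $L^2$-norm and the $(k+s)$-seminorm in powers of $\e$, and then finish with Chebyshev's inequality to estimate the measure of the ``bad'' set.

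Since $u_\e$ is continuous and $I\subset A_\e^\eta$ with $\eta<1$, the function $u_\e$ cannot vanish on $I$, so by the intermediate value theorem it keeps a constant sign there. Assume without loss of generality that $u_\e(t)\in(1-\eta,1+\eta)$ for all $t\in I$ (the case near $-1$ is identical). Set $v_\e=u_\e-1$, which has the same derivatives as $u_\e$ of any order $\ell\ge 1$ and the same $(k+s)$-seminorm. Hypothesis (H1) gives $W(u_\e)\ge \alpha_W v_\e^2$ on $I$, while the assumption $F_\e(u_\e)\le S$ controls the seminorm, so
$$
\|v_\e\|_{L^2(I)}^2\le \frac{S\,\e}{\alpha_W},\qquad [v_\e]_{k+s}^2(I)\le S\,\e^{1-2(k+s)}.
$$

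Now, for $|I|\ge R\e$ and $\ell\in\{1,\dots,k\}$, Proposition~\ref{interpolazione} applied to $v_\e$ with $\theta=1-\ell/(k+s)$, combined with $(a+b)^2\le 2(a^2+b^2)$, yields
$$
\|u_\e^{(\ell)}\|_{L^2(I)}^2\le 2R_{k,s}^2\Big(|I|^{-2\ell}\|v_\e\|_{L^2(I)}^2+\|v_\e\|_{L^2(I)}^{2\theta}[v_\e]_{k+s}^{2(1-\theta)}(I)\Big).
$$
The key algebraic identity $\theta+(1-\theta)(1-2(k+s))=1-2\ell$ (substitute $1-\theta=\ell/(k+s)$) makes both summands homogeneous of order $\e^{1-2\ell}$. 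Explicitly, the first is bounded by $(S/\alpha_W)R^{-2\ell}\e^{1-2\ell}$ and the second by $(S/\alpha_W)^\theta S^{1-\theta}\e^{1-2\ell}$. Hence there exists a constant $C_\ell=C_\ell(k,s,S,\alpha_W)$, independent of $R$ as soon as $R\ge 1$, such that
$$
\|u_\e^{(\ell)}\|_{L^2(I)}^2\le C_\ell\,\e^{1-2\ell}.
$$

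To conclude, Chebyshev's inequality gives, for every $\ell\in\{1,\dots,k\}$,
$$
\big|\{t\in I:|u_\e^{(\ell)}(t)|\ge \e^{-\ell}\}\big|\le \e^{2\ell}\|u_\e^{(\ell)}\|_{L^2(I)}^2\le C_\ell\,\e,
$$
so the set of $t\in I$ failing at least one of the $k$ conditions has measure at most $(\sum_{\ell=1}^k C_\ell)\,\e$. Setting $\widehat R_{k,s}:=\max\{1,\,1+\sum_{\ell=1}^k C_\ell\}$ forces this upper bound to be strictly less than $|I|\ge\widehat R_{k,s}\,\e$, so the complementary set (which is exactly the set in the statement) has positive Lebesgue measure. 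The only delicate point is the self-referential issue that the constants from interpolation depend a priori on $R$ through the $R^{-2\ell}$ factor; this is resolved by first requiring $R\ge 1$ so that $R^{-2\ell}\le 1$, which makes $C_\ell$ a function of $k,s,S,\alpha_W$ alone and allows $R$ to be fixed in terms of it.
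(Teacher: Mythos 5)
Your proof is correct and uses essentially the same ingredients as the paper's: Proposition~\ref{interpolazione} together with (H1) and the energy bound to get $\|u_\e^{(\ell)}\|_{L^2(I)}^2\le C_\ell\,\e^{1-2\ell}$, followed by Chebyshev. The paper phrases it as a contradiction argument (if the good set were null, some bad set would have measure $\ge |I|/k$, leading to $|I|<R\e$), whereas you run the union bound directly and handle the potential circularity by first imposing $R\ge 1$ to absorb the $|I|^{-2\ell}$ term; this is a clean reorganization of the same estimate rather than a different method.
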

\begin{proof} 
It is not restrictive to assume that $I\subset A_\e^\eta$ is such that  $|u_\e(t)-1|<\eta$ for all $t\in I$, the case  $|u_\e(t)+1|<\eta$ being completely analogous. Then, hypothesis (H1) on $W$ ensures that 
\begin{equation}\label{l2w}
\|u_\e-1\|^2_{L^2(I)}\leq \frac{1}{\alpha_W}\int_I W(u_\e)\, dt. 
\end{equation}
Fixing  $p=\frac{1}{\theta}=\frac{k+s}{k+s-\ell}$ and $q=\frac{p}{p-1}=\frac{k+s}{\ell}$, we can apply the convexity inequality 
$AB\leq \frac{1}{p}A^p+\frac{1}{q}B^q$ with the choice 
$$A=\e^{-\theta} \|u_\e-1\|^{2\theta}_{L^2(I)}, \quad B=\e^{2\ell-1+\theta} [u_\e]_{k+s}^{2(1-\theta)}(I),$$ 
obtaining 
\begin{equation}\label{stconv}
\e^{2\ell-1}\|u_\e-1\|^{2\theta}_{L^2(I)}[u_\e]^{2(1-\theta)}_{k+s}(I)\leq \frac{1}{\e}\|u_\e-1\|^2_{L^2(I)}+\e^{2(k+s)-1}[u_\e]^2_{k+s}(I).
\end{equation} 

By Proposition \ref{interpolazione} applied with $u=u_\e-1$, estimates \eqref{l2w} and assumption \eqref{defS}, this inequality allows us to deduce that 
\begin{eqnarray}\label{interpolationue}\nonumber
\|u_\e^{(\ell)}\|^2_{L^2(I)}&\leq &
R_{k,s}^2\Big(|I|^{-\ell}\|u_\e-1\|_{L^2(I)}+\|u_\e-1\|^\theta_{L^2(I)}[u_\e]^{1-\theta}_{k+s} (I)\Big)^2 
\\ \nonumber
&\leq &
2R_{k,s}^2\Big(|I|^{-2\ell}\|u_\e-1\|^2_{L^2(I)}+\|u_\e-1\|^{2\theta}_{L^2(I)}[u_\e]^{2(1-\theta)}_{k+s}(I) \Big) 
\\
&\le &2R^2_{k,s} \max\{1,\alpha_W^{-1}\} S (|I|^{-2\ell}+
\e^{-2\ell}) \e 
\end{eqnarray}
for all $\ell\in\{1,\dots, k\}$, where $R_{k,s}$ is the constant in \eqref{interpolazione-eq}. 

We now suppose that 
\begin{equation}\label{contr}
\Big|\Big\{t\in I: |u_\e^{(\ell)}(t)|<\frac{1}{\e^{\ell}} \ \hbox{\rm for all } \ell\in\{1,\dots, k\}\Big\}\Big|=0.  
\end{equation} 
Then, there exists $\ell\in \{1,\dots, k\}$ such that 
$$\Big|\Big\{t\in I: |u_\e^{(\ell)}(t)|\geq\frac{1}{\e^{\ell}}\Big\} \Big|\geq \frac{|I|}{k},$$ 
so that by \eqref{interpolationue} we get 
 \begin{equation*}%\label{interpolationue2}
\frac{|I|}{k} \frac{1}{\e^{2\ell}}\leq 2R^2_{k,s} \max\{1,\alpha_W^{-1}\} S (|I|^{-2\ell}+
\e^{-2\ell})  \e,  
\end{equation*}
which implies that either 
\begin{equation}\label{est1}
|I|^{1+2\ell} \leq 4R^2_{k,s} k \max\{1,\alpha_W^{-1}\} S\e^{1+2\ell}
\end{equation}  
or 
\begin{equation}\label{est2}
|I|\leq 4R^2_{k,s} k 
\max\{1,\alpha_W^{-1}\} S\e.
\end{equation}

We have then shown that if \eqref{contr} holds, there exists a positive constant $R=\widehat R_{k,s}$ independent of $\eta$, $I$ and $\e$ such that $|I|<R\e$, proving the claim.  
\end{proof} 

\begin{remark}\label{dernulla}\rm Let $k\neq 0$, and 
let $I$ be an open interval containing $2k+3$ disjoint maximal intervals $(c_n,d_n)$ such that $(c_n,d_n)\subset A_\e^\eta$ for each $n$. Then, for any $\ell\in\{1,\dots, k\}$ there exists a point $t_\e^\ell\in I$ with $u_\e^{(\ell)}(t_\e^\ell)=0$. 
Indeed, in each interval $(c_n,c_{n+2})$ there exists at least one critical point for $u_\e$; that is, there exist $k+1$ points in $I$ such that $u_\e^\prime$ vanishes. The claim follows by an iterated use of the Mean Value Theorem (or Rolle's Theorem). 
\end{remark}

\begin{lemma}[Bound of the length of intervals with ``large'' derivatives]\label{datibordo} 
Let $k\neq 0$ and $\eta\in(0,\min\{1,\sqrt{\beta_W}\})$ 
%and let $N\geq 1$ 
be fixed. Let $\widehat R_{k,s}$ be the positive constant given by Lemma {\rm\ref{stimaintervallo}}, and let 
\begin{equation}\label{defL}
L=L_{k,s}(\eta, S)=\frac{2S}{\alpha_W\eta^2}+(2k+3)\widehat R_{k,s}.
\end{equation} 
Then, for all $\e_r\to 0$ as $r\to+\infty$ and $\{I_r\}$ sequence of intervals satisfying 

\smallskip 

{\rm(i)} $|I_r|\geq 4L\e_r$ for any $r$,

{\rm(ii)} $\lim\limits_{r\to+\infty}F_{\e_r}(u_{\e_r};I_r)=0$

\smallskip 

\noindent there exists $r_0\in \mathbb N$ such that for all $r\geq r_0$ 
$$\Big|\Big\{t\in I_r\cap A_{\e^r}^\eta: |u^{(\ell)}_{\e_r}(t)|<\frac{1}{\e_r^\ell}\ \hbox{\rm for all }\ \ell\in\{1,\dots, k\}\Big\}\Big|>0.$$
\end{lemma}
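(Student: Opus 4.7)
The approach is a dichotomy on the lengths of the maximal sub-intervals of $I_r\cap A_{\e_r}^\eta$: if one of them is sufficiently long, I would reduce directly to Lemma \ref{stimaintervallo}; otherwise, there must be many short ones, and Remark \ref{dernulla} combined with the strong local assumption (ii) and the interpolation results above should force pointwise control on all derivatives simultaneously.

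First I would quantify the good set. The global bound $F_{\e_r}(u_{\e_r})\le S$ together with \eqref{mis} applied on $I_r$ yields $|I_r\setminus A_{\e_r}^\eta|\le 2S\e_r/(\alpha_W\eta^2)$; combined with (i) and the definition of $L$ in \eqref{defL},
$$
|I_r\cap A_{\e_r}^\eta|\ge 4L\e_r-\frac{2S\e_r}{\alpha_W\eta^2}=\bigl(3L+(2k+3)\widehat R_{k,s}\bigr)\e_r.
$$
I would then decompose $I_r\cap A_{\e_r}^\eta$ as a disjoint union of maximal open sub-intervals $(c_n,d_n)$; by continuity, $u_{\e_r}$ stays close to a single well on each. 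In \emph{Case A}, some $(c_n,d_n)$ has length $\ge \widehat R_{k,s}\e_r$, and Lemma \ref{stimaintervallo} applied to this sub-interval directly yields the conclusion. In \emph{Case B}, every $(c_n,d_n)$ has length strictly less than $\widehat R_{k,s}\e_r$, so the lower bound above forces the number of maximal sub-intervals to exceed $3L/\widehat R_{k,s}+(2k+3)>2k+3$.

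In Case B I would select a sub-interval $I^{*}\subset I_r$ containing exactly $2k+3$ consecutive maximal sub-intervals. A straightforward length count (sub-intervals of length $<\widehat R_{k,s}\e_r$ plus gaps of total length $\le L\e_r$) gives $|I^{*}|\le 2L\e_r$. Remark \ref{dernulla} then provides points $t^\ell\in I^{*}$ with $u_{\e_r}^{(\ell)}(t^\ell)=0$ for every $\ell\in\{1,\dots,k\}$. The seminorm decay $[u_{\e_r}]_{k+s}^2(I_r)=o(\e_r^{1-2(k+s)})$ from (ii), fed into Lemma \ref{stimaseminorma} applied to $u_{\e_r}^{(k)}$ on $I^{*}$ (with $J$ chosen between two zeros of $u_{\e_r}^{(k-1)}$ in $I^{*}$ so the mean term vanishes), gives $\|u_{\e_r}^{(k)}\|_{L^2(I^{*})}\le C|I^{*}|^{s}[u_{\e_r}^{(k)}]_{s}(I^{*})=o(\e_r^{1/2-k})$. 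Iterating the fundamental theorem of calculus backward through the zeros $t^\ell$, for $\ell=k-1,\dots,1$, produces $\|u_{\e_r}^{(\ell)}\|_{L^\infty(I^{*})}\le |I^{*}|^{k-\ell-1/2}\|u_{\e_r}^{(k)}\|_{L^2(I^{*})}=o(\e_r^{-\ell})$, hence strictly below $\e_r^{-\ell}$ for large $r$. For the top order $\ell=k$, Chebyshev's inequality applied to the $L^2$-bound shows $|\{t\in I^{*}:|u_{\e_r}^{(k)}(t)|\ge \e_r^{-k}\}|\le \e_r^{2k}\|u_{\e_r}^{(k)}\|^2_{L^2(I^{*})}=o(\e_r)$, which (after enlarging the chosen group of maximal sub-intervals to ensure $|I^{*}\cap A_{\e_r}^\eta|$ dominates this $o(\e_r)$ bad set) yields a positive-measure subset of $I_r\cap A_{\e_r}^\eta$ with all derivatives bounded by $\e_r^{-\ell}$.

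The main obstacle lies entirely in Case B. Unlike the setting of Lemma \ref{stimaintervallo}, $I^{*}$ is not contained in $A_{\e_r}^\eta$ and $u_{\e_r}$ may switch wells between adjacent sub-intervals, so one cannot translate $u_{\e_r}$ by a single constant and feed it into the interpolation inequality \eqref{interpolazione-eq}. The assumption $F_{\e_r}(u_{\e_r};I_r)\to 0$ (rather than just the uniform bound $S$) is used essentially to make the Poincar\'e/interpolation chain, started from the vanishing points $t^\ell$, close with sharp exponents all the way down to order one. A further technical subtlety is the selection of $J$ in Lemma \ref{stimaseminorma}: to annihilate the mean term one must place $J$ between two zeros of $u_{\e_r}^{(k-1)}$ while still having $2|J|\ge|I^{*}|$, which may require passing to a cluster of maximal sub-intervals strictly larger than the minimal $2k+3$ in $I^{*}$.
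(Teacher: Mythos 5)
Your proposal follows a genuinely different route from the paper's in the core (Case~B) argument. Where the paper rescales $u_{\e_r}$ on a window of length $4L\e_r$ to the unit interval, proves equiboundedness in $H^k(0,1)$, extracts a uniform limit $v$, and then uses the $W$-part of the energy (which vanishes by (ii)) to force $v\equiv\pm1$ --- contradicting $||v_r(x_r^0)|-1|=\eta$ --- you instead try to produce the conclusion \emph{directly}: you bound $\|u_{\e_r}^{(k)}\|_{L^2(I^*)}=o(\e_r^{1/2-k})$ via Lemma~\ref{stimaseminorma} with vanishing mean term, control $u_{\e_r}^{(\ell)}$ in $L^\infty$ for $\ell<k$ by iterated FTC through the zeros supplied by Remark~\ref{dernulla}, and finish with Chebyshev for $\ell=k$. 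That is a legitimate and in principle more elementary quantitative argument, which avoids the compactness/Arzel\`a--Ascoli step and the $W$-integral contradiction. Your Case~A reduction matches the paper's first step exactly.

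However, the proposal leaves the decisive geometric step unresolved, and there is one genuinely missing idea. (1) You correctly flag that $J$ in Lemma~\ref{stimaseminorma} must satisfy $2|J|\ge|I^*|$ while the endpoints of $J$ must be zeros of $u^{(k-1)}_{\e_r}$, and that $|I^*\cap A_{\e_r}^\eta|$ must beat the $o(\e_r)$ Chebyshev bad set; you propose ``enlarging the cluster'' but do not say how. The single resolution to both is precisely the paper's combinatorial setup: fix a window $J_r\subset I_r$ with $|J_r|=4L\e_r$ and apply Remark~\ref{dernulla} separately to the two end pieces $(\alpha_r,\alpha_r+L\e_r)$ and $(\beta_r-L\e_r,\beta_r)$ --- each of which, by \eqref{mis} and the definition of $L$, contains $2k+3$ maximal sub-intervals of $A_{\e_r}^\eta$ --- to place the two zeros of $u^{(k-1)}_{\e_r}$ at distance $\ge2L\e_r=|J_r|/2$, while $|J_r\cap A_{\e_r}^\eta|\ge(3L+(2k+3)\widehat R_{k,s})\e_r$. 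Without this, your argument is incomplete. (2) More seriously, the scheme fails as stated for $k=1$: there $u^{(k-1)}_{\e_r}=u_{\e_r}$ itself, which has no zeros on $A_{\e_r}^\eta$ (indeed $|u_{\e_r}|>1-\eta>0$ there), so ``$J$ between two zeros of $u^{(k-1)}$'' is not available. The paper instead chooses $\alpha^1_r,\beta^1_r$ among the endpoints of maximal intervals in $A_{\e_r}^\eta$, which only yields $|\int_J v_r'|\le 2(1+\eta)$ --- enough for the paper's qualitative compactness argument, but not for your quantitative one: the mean term then contributes $O(\e_r^{-1/2})$ (not $o$) to $\|u'_{\e_r}\|_{L^2(J_r)}$, Chebyshev gives only an $O(\e_r)$ bad set, and whether it is smaller than $|J_r\cap A_{\e_r}^\eta|$ becomes a delicate comparison of the constants in Lemmas~\ref{stimaintervallo} and \ref{stimaseminorma} that is not addressed and is not automatically true. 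The paper's compactness route is specifically insensitive to such constant comparisons, which is why it works uniformly for all $k\ge1$.
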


\begin{proof} 
Let $J_r=(\alpha_r,\beta_r)\subset I_r$ be such that $|J_r|=4L\e_r$.  
We argue by contradiction, and, in view of Lemma \ref{stimaintervallo}, we suppose that (up to subsequences) $J_r$ does not contain a subinterval of $\{t:||u_{\e_r}|-1|<\eta\}$ with length strictly greater than $\widehat R_{k,s}\e_r$. 
Since the measure of the set 
$\{t:||u_{\e_r}|-1|\geq\eta \}$ is estimated by \eqref{mis},  the definition of $L$ and the assumption by contradiction imply that
both the intervals $(\alpha_r, \alpha_r+L\e_r)$ and $(\beta_r-L\e_r, \beta_r)$ contain $2k+3$ disjoint maximal intervals where $||u_{\e_r}|-1|<\eta$. 
If $k\geq 2,$ by Remark~\ref{dernulla} we deduce that there exist $\alpha^k_r\in(\alpha_r, \alpha_r+L\e_r)$
and $\beta^k_r\in (\beta_r-L\e_r, \beta_r)$ such that $u^{(k-1)}_{\e_r}(\alpha^k_r)=u^{(k-1)}_{\e_r}(\beta^k_r)=0$, while for $k=1$ we simply choose
$\alpha^1_r\in(\alpha_r, \alpha_r+L\e_r)$
and $\beta^1_r\in (\beta_r-L\e_r, \beta_r)$ in the set of the endpoints of the maximal intervals in $A_{\e_r}^\eta$.

 Note that for all $k$ we have that $\beta^k_r-\alpha^k_r\geq 2L\e_r=\frac{|J_r|}{2}$. 
Moreover, again by Remark \ref{dernulla}, for any $\ell\in\{1,\dots, k\}$ there exist $t^\ell_r\in J_r$ such that $u^{(\ell)}_{\e_r}(t^\ell_r)=0$. Let $t^0_r\in J_r$ be a point such that $||u_{\e_r}(t^0_r)|-1|=\eta$.  
Now, we set $v_r(x)=u_{\e_r}(\alpha_r+4L\e_r x)$, so that  
$v^{(\ell)}_r(x_r^\ell)=0$ and $||v_r(x_r^0)|-1|=\eta$ with $x_r^\ell=\frac{t^\ell_r-\alpha_r}{4L\e_r}$. An iterated use of the Fundamental Theorem of Calculus then gives the estimate   
\begin{equation}\label{stimahk}
\|v_r\|_{H^k(0,1)}\leq C(L) \big(1+\|v_r^{(k)}\|_{L^2(0,1)}\big).
\end{equation} 
Setting $y_r=\frac{\alpha^k_r-\alpha_r}{4L\e_r}$ and 
$z_r=\frac{\beta^k_r-\alpha_r}{4L\e_r}$, since $|z_r-y_r|\geq \frac{1}{2}$, we can apply Lemma~\ref{stimaseminorma} to the function $v_r^{(k)}$, with $I=(0,1)$ and  $J=(y_r,z_r)$, obtaining 
$$\|v_r^{(k)}\|_{L^2(0,1)}\leq C\bigg(\Big|\int_{y_r}^{z_r} v_r^{(k)}(x)\, dx\Big| + [v_r]_{k+s}\bigg).$$ 
Since by construction $v^{(k-1)}_r(y_r)=v^{(k-1)}_r(z_r)=0$, and, in the case $k=1$, $|v_r(y_r)|$ and $|v_r(z_r)|$ are bounded by $1+\eta$, recalling \eqref{stimahk} it follows that 
$$\|v_r\|_{H^k(0,1)}\leq C (1+[v_r]_{k+s}).$$
where $C$ is a possibly different positive constant, independent of $r$. 
Changing variable we have
$$[v_r]^2_{k+s}=(4L\e_r)^{2(k+s)-1}[u_r]^2_{k+s}(J_r),$$
so that 
$$\|v_r\|_{H^k(0,1)}\leq C \big(1+ (4L)^{2(k+s)-1}F_{\e_r}(u_{\e_r};J_r)\big).$$ 
Since $F_{\e_r}(u_{\e_r};J_r)\to 0$ as $r\to+\infty$,  
we then deduce 
the equiboundedness of $v_r$ in $H^k$, so that, up to subsequences,  there exists $v\in H^k$ such that $v_r\to v$ uniformly. Hence, 
\begin{eqnarray*}
&&0\ =\lim_{r\to+\infty}F_{\e_r}(u_{\e_r};I_r)\ \geq\ \liminf_{r\to+\infty}\frac{1}{\e_r}\int_{J_r} W(u_{\e_r})\, dx\\ &&\ \ \ =\ 4L\liminf_{r\to+\infty}\int_0^1 W(v_r)\, dx\ 
\geq\  4L\int_0^1 W(v)\, dx.
\end{eqnarray*}
This implies that either $v=-1$ in $(0,1)$, or $v=1$ in $(0,1)$, which is a contradiction since 
for all $r$ we have $||v_r(x_r^0)|-1|=\eta\neq 0$. 
\end{proof}

\begin{remark}\rm
The claim of Lemma \ref{datibordo}  holds without requiring condition (ii) if we assume that  
$$\lim_{r\to+\infty}\frac{|I_{r}|}{\e_r}=+\infty.$$ 
Indeed, we set $N_r=\lfloor \frac{|I_r|}{4L\e_r}\rfloor,$ and consider a family of $N_r$ disjoint subintervals of $I_r$ with length $4L\e_r$. Then, there exists an interval $J_r=(a_r,b_r)$ such that $|J_r|=4L\e_r$ such that 
$$F_{\e_r}(u_{\e_r};J_r)\leq \frac{S}{N_r},$$ and we can apply Lemma \ref{datibordo}.  
\end{remark}

\section{Compactness} 
In this section we prove that the sequence $F_\e$ is equicoercive; more precisely, that sequences $u_\e$ with equibounded energies are pre-compact in measure. We recall that we consider functions defined on $(0,1)$ for ease of notation.

\begin{theorem}[Compactness]\label{compth} 
Let $\{u_\e\}$ be a family in $H^{k+s}(0,1)$ such that $\sup_{\e>0} F_\e(u_\e)\le S<+\infty$. Then, there exist $u\in BV((0,1);\{-1,1\})$ and a subsequence $\e_r\to 0$ such that $u_{\e_r}\to u$ in measure.  Furthermore, if  $W$ satisfies $W(z)\ge c_1|z|^p-c_2$ for some $c_1,c_2$ and $p\ge 1$, then $u_{\e_r}\to u$ in $L^p(0,1)$.
\end{theorem}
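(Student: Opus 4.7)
The plan is to construct a $\{-1,1\}$-valued surrogate $\tilde u_\e$ capturing the essential phase structure of $u_\e$, show its jump set has uniformly bounded cardinality, and then transfer compactness back to $u_\e$. I fix $\eta\in(0,\min\{1,\sqrt{\beta_W}\})$. By (H1) and the energy bound, the bad set $B^\eta_\e=\{t:||u_\e(t)|-1|\ge\eta\}$ satisfies $|B^\eta_\e|\le 2S\e/(\alpha_W\eta^2)$, as in~\eqref{mis}. On each connected component of $A^\eta_\e=(0,1)\setminus B^\eta_\e$, the function $u_\e$ is continuous and $|u_\e|>1-\eta>0$, so $\mathrm{sign}(u_\e)$ is locally constant; I set $\tilde u_\e=\mathrm{sign}(u_\e)$ on $A^\eta_\e$ and extend by left-continuity to $B^\eta_\e$. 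Then $|u_\e-\tilde u_\e|\le\eta$ outside a set of measure $\le 2S\e/(\alpha_W\eta^2)$. The crux will be to bound $N_\e:=\#S(\tilde u_\e)$ uniformly in $\e$.

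The key estimate I would establish is the following sub-claim: there exist $\delta,\e_0>0$ such that for every $\e\le\e_0$ and every interval $I\subset(0,1)$ of length $4L\e$ (with $L=L_{k,s}(\eta,S)$ as in Lemma~\ref{datibordo}) in which $u_\e$ attains values both in $(1-\eta,1+\eta)$ and in $(-1-\eta,-1+\eta)$, one has $F_\e(u_\e;I)\ge\delta$. I would argue by contradiction: if the sub-claim failed, a sequence $\e_r\to 0$ and intervals $I_r$ of length $4L\e_r$ with $F_{\e_r}(u_{\e_r};I_r)\to 0$ would exist; rescaling $v_r(x):=u_{\e_r}(\inf I_r+4L\e_r x)$ on $(0,1)$ would yield $\int_0^1 W(v_r)\,dx\to 0$ and $[v_r]^2_{k+s}(0,1)\to 0$ while $v_r$ still attains both phases. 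Following the strategy of the proof of Lemma~\ref{datibordo}, I would distinguish the case in which the maximal $A^\eta$-components of $v_r$ are numerous and short (so Remark~\ref{dernulla} provides, in each half of $(0,1)$, points $y_r,z_r$ with $v_r^{(k-1)}(y_r)=v_r^{(k-1)}(z_r)=0$ and $|z_r-y_r|\ge 1/2$) from the case in which a long component exists (so Lemma~\ref{stimaintervallo} together with Proposition~\ref{interpolazione} supplies analogous interior bounded-derivative points). In either case, applying Lemma~\ref{stimaseminorma} to $v_r^{(k)}$ together with iterated applications of the Fundamental Theorem of Calculus yields $\|v_r\|_{H^k(0,1)}\le C(1+[v_r]_{k+s})$, so $v_r$ is equibounded in $H^k\hookrightarrow C^0$ and converges uniformly along a subsequence to some $v$. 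Continuity of $W$ then forces $W(v)\equiv 0$, so $v\in\{-1,1\}$; by continuity $v$ must be constant, contradicting the presence of both phases in the uniform limit.

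With the sub-claim in hand, I enclose each jump $t_j$ of $\tilde u_\e$ in $I_j:=[t_j-2L\e,t_j+2L\e]$. A greedy/clustering argument — selecting pairwise-disjoint $I_j$'s for well-separated transitions and absorbing clusters of transitions at pairwise distance $<4L\e$ into a single interval that still contains both phases and to which the same scaling argument applies — yields $N_\e\le\bar C$ uniformly. Thus $\tilde u_\e$ is equibounded in $BV(0,1)$ and hence precompact in $L^1$; extracting a subsequence, $\tilde u_{\e_r}\to u$ a.e.~for some $u\in BV((0,1);\{-1,1\})$. Combining $|u_\e-\tilde u_\e|\le\eta$ on $A^\eta_\e$ with $|B^\eta_\e|\to 0$ and a diagonal argument on $\eta\to 0^+$ gives $u_{\e_r}\to u$ in measure. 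For the $L^p$ improvement under $W(z)\ge c_1|z|^p-c_2$, the bound $c_1\int|u_\e|^p\le S\e+c_2$ gives uniform $L^p$-boundedness, and the tail estimate $\int_{\{|u_\e|\ge M\}}|u_\e|^p\le c_1^{-1}\bigl(\int W(u_\e)+c_2|\{|u_\e|\ge M\}|\bigr)$ together with $\int W(u_\e)\le S\e\to 0$ and $|\{|u_\e|\ge M\}|\le\|u_\e\|_{L^p}^p/M^p$ yields uniform integrability; Vitali's theorem then upgrades convergence in measure to $L^p$-convergence.

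The main obstacle will be the sub-claim: one must control the rescaled $v_r$ in $H^k(0,1)$ knowing only $\int W(v_r)\to 0$ and $[v_r]^2_{k+s}\to 0$. Unlike in Lemma~\ref{datibordo}, where the assumed failure of the conclusion directly forces the ``short-components, hence many'' regime that furnishes critical points of $v_r^{(k-1)}$, here both regimes of Remark~\ref{dernulla} and Lemma~\ref{stimaintervallo} must be handled jointly to produce the vanishing-derivative points that close the Poincar\'e-type estimate. The clustering step in the transition-counting argument has similar non-local subtleties.
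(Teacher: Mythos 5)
Your construction of $\tilde u_\e$ and the sub-claim (energy $\ge\delta$ on length-$4L\e$ windows containing both phases) are in the spirit of the paper's Lemma \ref{datibordo} and provable by the techniques you cite, but the clustering step that is supposed to yield $N_\e\le\bar C$ has a genuine gap. Disjointness of the windows $I_j=[t_j-2L\e,t_j+2L\e]$ around \emph{well-separated} jumps bounds the number of \emph{clusters} — at most $S/\delta$ of them — but says nothing about the number of jumps inside a single cluster. If $K$ jumps of $\tilde u_\e$ occur at pairwise distances smaller than $4L\e$, your sub-claim only yields that the enclosing interval carries energy $\ge\delta$, independently of $K$, and the ``same scaling argument'' you invoke on the merged cluster interval does not close: on a cluster of length $\ell_r$ with $\ell_r/\e_r\to\infty$ one has $[v_r]^2_{k+s}(0,1)=\ell_r^{2(k+s)-1}[u_{\e_r}]^2_{k+s}(I_r)\le(\ell_r/\e_r)^{2(k+s)-1}S$, which is no longer forced to be small, so the rescaled functions are not driven to a constant. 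What is actually needed is a bound on the number of transitions \emph{per se}, not on the number of disjoint bi-phase windows.

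This is exactly what the paper's Proposition \ref{numbound} supplies, and its mechanism is different from a per-window energy lower bound: arguing by contradiction on a sequence with $\widehat M_r\to\infty$ transitions between $-1+\eta$ and $1-\eta$, one partitions the transitions into $M_r\sim\widehat M_r/(6(k+2))$ consecutive blocks of $6(k+2)$ transitions each, pigeonholes to find a block $I_r$ with $F_{\e_r}(u_{\e_r};I_r)\le S/M_r\to 0$, and then splits into the cases $|I_r|\lesssim\e_r$ and $|I_r|/\e_r\to\infty$ to rescale to a uniform limit with $W\equiv 0$ while retaining a transition — a contradiction. The count $6(k+2)$ per block is essential: it provides, in \emph{every} block, enough Rolle points with $u_{\e_r}^{(\ell)}=0$ for $\ell\le k$ and two well-separated points with $|u_{\e_r}^{(k-1)}|$ controlled, which is what closes the Poincar\'e-type estimate through Lemma \ref{stimaseminorma}. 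Your sub-claim is quantified only over windows containing \emph{at least one} transition, which is why it cannot control clustering. The remaining steps of your argument — the $\pm 1$-valued proxy, $BV$-precompactness and convergence in measure via $\eta\to 0^+$, and the $L^p$ upgrade by uniform integrability — are essentially the paper's (which uses the proxy $u_r^\eta$, estimates $\|u_r-u_r^\eta\|_{L^2(\{|u_r|<2\})}^2\le\e_r S/\alpha_W$ directly from $\int W(u_r)$, and bounds the tail $\int_{\{|u_\e|\ge\bar z\}}|u_\e|^p$ with the growth of $W$, so Vitali is not needed). Replacing your clustering step by the paper's Proposition \ref{numbound}, or strengthening your sub-claim so that its conclusion scales with the number of transitions in the window, would repair the proof.
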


Before proving Theorem \ref{compth}, we give the definition of transition intervals (between two values $\lambda_1<\lambda_2$) of the sequence $u_\e$. Let $v$ be a continuous function; we say that an open interval $I\subset(0,1)$ is a {\em transition interval for $v$ between $\lambda_1$ and $\lambda_2$} if $v(t)\in(\lambda_1,\lambda_2)$ for $t\in I$, and $\{v(\inf I), v(\sup I)\}=\{\lambda_1,\lambda_2\}$. Note that different transition intervals are disjoint.
With given a sequence $u_\e$, the set of the transition intervals for $u_\e$ between $\lambda_1$ and $\lambda_2$ is denoted by $\mathcal I_\e(\lambda_1,\lambda_2)$. 

We will prove a bound on the number of transition intervals ``not crossing the wells''; that is, between two values $\lambda_1<\lambda_2$ such that $[\lambda_1,\lambda_2]\cap \{-1,1\}=\emptyset$. In particular, with fixed
$\eta\in(0,\min\{1,\sqrt{\beta_W}\})$, we will consider transition intervals between $-1+\eta$ and $1-\eta$, which will be called $\eta${\em-transition intervals}.
With given a sequence $u_\e$, the set of the $\eta$-transition intervals for $u_\e$ is denoted by $\mathcal I_\e^\eta$.

In the following result we prove that the number of transitions between two values as above is equibounded. In particular, we will prove the compactness theorem by using the fact the number of the $\eta$-transitions is equibounded, while the general case will be used in the proof of the lower-bound estimate. 

\begin{proposition}[Bound on the number of transitions]\label{numbound}  
Let $k+s>\frac12$ and let $\{u_\e\}\subset H^{k+s}(0,1)\cap C^\infty([0,1])$ with $F_\e(u_\e)\le S<+\infty$. Let $\lambda_1<\lambda_2$ be such that 
$[\lambda_1,\lambda_2]\cap\{-1,1\}=\emptyset$, and let $\e_r\to 0$. Then,
$\sup_r \#\mathcal I_{\e_r}(\lambda_1,\lambda_2)<+\infty$.
\end{proposition}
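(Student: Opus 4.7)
I would argue by contradiction: suppose $\#\mathcal{I}_{\e_r}(\lambda_1, \lambda_2) \to +\infty$. Since $[\lambda_1, \lambda_2]$ is compact and disjoint from $\{-1, 1\}$, $W_0 := \min_{[\lambda_1, \lambda_2]} W > 0$, so on each transition interval $I$ one has $W(u_{\e_r}) \ge W_0$ and
\[
S \ge F_{\e_r}(u_{\e_r}) \ge \frac{W_0}{\e_r}\sum_{I \in \mathcal{I}_{\e_r}}|I|,
\]
forcing $\sum_I|I| = O(\e_r)$. In particular, at most $S/W_0$ transitions have length $\ge \e_r$, so after passing to a subsequence one may assume $\mu_r := |I_r|/\e_r \to 0$ for the transitions $I_r = (a_r, b_r)$ under consideration.

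The core idea is a blow-up around each short transition. Define $\hat J_r := (a_r - \e_r, b_r + \e_r)$ and $w_r(z) := u_{\e_r}(a_r - \e_r + |\hat J_r|z)$ for $z \in (0, 1)$; with $\nu_r := |\hat J_r|/\e_r = \mu_r + 2 \in (2, 3)$, the usual scaling computation gives
\[
F_{\e_r}(u_{\e_r}; \hat J_r) = \nu_r \int_0^1 W(w_r)\,dz + \nu_r^{1-2(k+s)}[w_r]_{k+s}^2(0, 1).
\]
Since $|\hat J_r| \le 3\e_r$ and $N_r \to \infty$, a pigeonhole argument on the $\hat J_r$'s, possibly after restricting to a maximal $3\e_r$-separated subfamily, produces a further subsequence with $F_{\e_r}(u_{\e_r}; \hat J_r) \to 0$; the boundedness of $\nu_r$ away from $0$ and $\infty$ then yields both $\int_0^1 W(w_r) \to 0$ and $[w_r]_{k+s}^2(0, 1) \to 0$.

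To extract a uniformly convergent subsequence from $w_r$, observe that (H1) gives $|\{z: ||w_r(z)|-1|\ge \eta\}| \to 0$ for any $\eta > 0$. A bootstrap combining Lemma \ref{stimaseminorma} applied to a fixed interval $J \subset (0,1)$ of length $\ge 1/2$ (splitting $\int_J w_r$ on good and bad sets, treating the bad-set contribution by Cauchy--Schwarz, and controlling the appearing $[w_r]_s$ via an interpolation inequality between $\|w_r\|_{L^2}$ and $[w_r]_{k+s}$) then yields a uniform bound on $\|w_r\|_{L^2(0,1)}$. Proposition \ref{interpolazione} in turn bounds $\|w_r\|_{H^{k+s}(0,1)}$ uniformly, and since $k+s>1/2$ Rellich compactness together with the Sobolev embedding $H^{k+s}\hookrightarrow C^0$ furnishes a subsequence $w_r \to w$ uniformly on $[0, 1]$. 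The contradiction is then immediate: by construction $w_r(1/\nu_r) = \lambda_1$ and $w_r((\mu_r + 1)/\nu_r) = \lambda_2$, while both evaluation points converge to $1/2$ as $\mu_r \to 0$ and $\nu_r \to 2$, so $\lambda_1 = w(1/2) = \lambda_2$, absurd.

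The main obstacle will be controlling the overlap/packing of the $\hat J_r$: if many transitions cluster within distance $\e_r$ of each other, the multiplicity of $\bigcup_r \hat J_r$ is unbounded and the naive pigeonhole on $\sum_r F_{\e_r}(u_{\e_r}; \hat J_r)$ fails. When no well-separated subfamily of size tending to infinity can be extracted, one should instead exploit the clustering directly via a convexity estimate on $\sum_I [u_{\e_r}]_{k+s}^2(I)$; in the simpler case $k = 0$, $s > 1/2$, the lower bound $[u_{\e_r}]_s^2(I) \ge c(\lambda_2-\lambda_1)^2|I|^{1-2s}$ combined with the convexity of $x \mapsto x^{1-2s}$ and $\sum_I|I|=O(\e_r)$ gives the result without any blow-up, and the higher-order case should follow similarly using the interpolation inequalities of Proposition \ref{interpolazione} together with the critical-point machinery of Lemma \ref{stimaintervallo} and Remark \ref{dernulla}.
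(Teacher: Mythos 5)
Your opening reduction ($\sum_I|I|=O(\e_r)$, hence most transitions are short) is sound, and your $k=0$ fallback is actually a clean alternative to the paper's $k=0$ argument: the Morrey bound $[u]_s^2(I)\geq c(\lambda_2-\lambda_1)^2|I|^{1-2s}$ for $s>\tfrac12$ together with Jensen applied to the convex map $x\mapsto x^{1-2s}$ and the disjointness of transition intervals gives the bound $\#\mathcal I_{\e_r}\leq C$ with no blow-up at all. But the main route you propose has the gap you yourself flag, and it is fatal for $k\geq 1$: the padded intervals $\hat J_r=(a_r-\e_r,b_r+\e_r)$ can overlap with unbounded multiplicity, so $\sum_r F_{\e_r}(u_{\e_r};\hat J_r)$ need not be $\leq S$ and the pigeonhole does not produce an interval of small energy. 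Restricting to a ``maximal $3\e_r$-separated subfamily'' does not help: precisely when all the bad transitions cluster inside one window of size $O(\e_r)$, that subfamily has size $O(1)$ and the argument gives nothing. Your proposed fix by ``convexity on $\sum_I[u]_{k+s}^2(I)$'' has no analogue for $k\geq 1$ because there is no per-transition Morrey-type lower bound on the order-$(k+s)$ seminorm: a function can ramp linearly from $\lambda_1$ to $\lambda_2$ with $u^{(k)}\equiv 0$ for $k\geq 2$, so a single transition does not force $[u]_{k+s}^2(I)\geq c|I|^{1-2(k+s)}$.

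What the paper does instead, and what is missing from your argument, is to group $6(k+2)$ \emph{consecutive} transition intervals into a single block $I_r^n$. Consecutive blocks are automatically disjoint, so the pigeonhole on energy applies without any packing issue, regardless of clustering. The large number of transitions inside the selected block then does double duty: it guarantees (via Rolle's theorem, Remark \ref{dernulla}) enough points where $u^{(\ell)}_{\e_r}$ vanishes for every $\ell\leq k$, which is exactly what feeds the iterated Fundamental-Theorem-of-Calculus bound on $\|v_r\|_{H^k}$ and the application of Lemma \ref{stimaseminorma} to $v_r^{(k)}$ (not to $v_r$ itself, as in your bootstrap). You could repair your argument by replacing the padded windows $\hat J_r$ with such disjoint blocks of consecutive transitions; as stated, however, the pigeonhole step does not go through.
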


\begin{proof} 
We first treat the case $k=0$ and $s>\frac12$ as it uses a simpler argument. 
Suppose by contradiction that there exist $M_r$ transition intervals for $u_r:=u_{\e_r}$ with $M_r\to+\infty$. Then there exists a transition interval $I_r$ between $\lambda_1$ and $\lambda_2$ such that
\begin{equation}\label{lowen2.1}
F_{\e_r}(u_r; I_r)=\frac{1}{\e_r}\int_{I_r}W(u_r)\, dx+\e_r^{2s-1}[u_r]^2_{k+s}(I_r)\leq \frac{S}{M_r}=o(1).
\end{equation}
Since $[\lambda_1,\lambda_2]\cap\{-1,1\}=\emptyset$ we have $W(u_r)\ge C>0$ on $I_r$, with $C$ depending only on $\lambda_1$ and $\lambda_2$, so that in particular
$$
\frac{|I_r|}{\e_r}\le \frac1{C\e_r}\int_{I_r} W(u_r)\,dx\le \frac1CF_{\e_r}(u_r; I_r)=o(1).
$$
If $I_r=(\alpha_r,\beta_r)$, we set $v_r(t)= u_r(\alpha_r+(\beta_r-\alpha_r)t)-u_r(\alpha_r)$. A change of variables gives
$$
[v_r]^2_s(0,1)= (\beta_r-\alpha_r)^{2s-1}[u_r]^2_s (I_r)\le \Big(\frac{|I_r|}{\e_r}\Big)^{2s-1}F_{\e_r}(u_r; I_r)=o(1).
$$
The compact embedding of $H^s(0,1)$ in $C^{0,s'}(0,1)$ for $s'\le s-\frac12$ and the fact that $v_r(0)=0$ imply the uniform convergence of $v_r$ to $0$, which gives a contradiction since $|v_r(1)|=\lambda_2-\lambda_1$.

\medskip
If $k\geq 1$ an argument using the compact embedding of $H^{k+s}$  is not immediately possible. 
In this case, we suppose by contradiction that there exist $\widehat M_r\to+\infty $,  and
$(a_r^j,b_r^j)\in \mathcal I_{\e_r}(\lambda_1,\lambda_2)$ for $j\in\{1,\ldots, \widehat M_r\}$ transition intervals such that $a_r^j<a_r^{j+1}$ for all $r$ and for all $j\in\{1,\dots, \widehat M_r^\eta-1\}$.  We set
$$
M_r=\bigg\lfloor \frac{\widehat M_r}{6(k+2)}\bigg\rfloor,
$$
and define the intervals 
$$I_r^{n}=(a_r^{1+6n(k+2)}, b_r^{6(n+1)(k+2)}), \ \ n\in\big\{0,\dots, M_r\big\}.$$ 
By construction each $I_r^{n}$ contains at least $6(k+2)$ transition intervals for $u_r$. 

We then choose $n_r\in \{0,\dots, M_r\}$ such that, setting $I_r=I_r^{n_r}$, 
\begin{equation}\label{lowen2}
F_{\e_r}(u_r; I_r)=\frac{1}{\e_r}\int_{I_r}W(u_r)\, dx+\e_r^{2(k+s)-1}[u_r]^2_{k+s}(I_r)\leq \frac{S}{M_r}.
\end{equation} 

The argument by contradiction in the following will consist in a scaling argument producing a sequence of functions with equibounded $H^{k+s}$ norm on a common interval and uniformly converging to a constant. Since the same functions are transition functions this will prove the contradiction. In order to bound the $H^{k+s}$ norm we make an iterated use of the Fundamental Theorem of Calculus, which will be possible since for each $\ell\in\{1,\ldots,k-1\}$ we will find points of the reference intervals where the $\ell$-th derivative vanishes. This in turn is possible by the simple observation that in an interval with $k$ transitions there are at least $k-1$ extremal points, and hence $k-1$ points where the first derivative vanishes. Repeating the argument, the $\ell$-th derivative vanishes in at least $k-\ell$ points.

We note that this argument can be directly applied to the interval $I_r$ if it is of length  
of order $\e_r$, while otherwise it will be applied to suitably chosen subintervals by use of interpolation arguments (for which a higher number of transition intervals is needed, which explains the number $6(k+2)$ above).
We separately treat the two cases.
\medskip \goodbreak

\noindent {\em Case $1$: $\sup_{r}\frac{|I_r|}{\e_r}\leq T<+\infty$.} 

\smallskip
We divide $I_r$ in three subintervals, each one containing (at least) $2k+3$ transition intervals between $\lambda_1$ and $\lambda_2$; we set 
$$I^-_r=(a_r^{1+6n_r(k+2)}, b_r^{1+(6n_r+2)(k+2)}),\ I^+_r=(a_r^{1+(6n_r+4)(k+2)}, b_r^{(6n_r+6)(k+2)}).$$ 
Since both $I^-_r$ and $I^+_r$ contain at least $k$ transition intervals between $\lambda_1$ and $\lambda_2$, 
there exist $\alpha^1_r\in I^-_r$ and $\beta^1_r\in I^+_r$ such that 
$u_r(\alpha^1_r)=u_r(\beta^1_r)$.
Moreover both $I^-_r$ and $I^+_r$ contain at least $k$ critical points for $u_{r}$, and, if $k\ge 2$, there exist $\alpha^k_r\in I^-_r$ and $\beta^k_r\in I^+_r$ such that 
$u_r^{(k-1)}(\alpha^k_r)=u_r^{(k-1)}(\beta^k_r)=0$. Hence, for all $k\ge 1$ in particular $u_r^{(k-1)}(\alpha^k_r)-u_r^{(k-1)}(\beta^k_r)=0$.

Setting $v_r(x)=u_r((\beta^k_r-\alpha^k_r) x+\alpha^k_r)-\lambda_1$, we get 
\begin{eqnarray}\label{stisem}
[v_r]^2_{k+s}(0,1)&=&(\beta^k_r-\alpha^k_r)^{2(k+s)-1}[u_r]^2_{k+s}(\alpha^k_r,\beta^k_r)\nonumber\\ 
&\leq&\Big(\frac{\beta^k_r-\alpha^k_r}{\e_r}\Big)^{2(k+s)-1}\frac{S}{M_r}\leq \frac{{T}^{2(k+s)-1}S}{M_r}, 
\end{eqnarray}
since \eqref{lowen2} holds. 
Noting that by construction $(\alpha^k_r,\beta^k_r)$ contains at least 
$k$ transition intervals between $\lambda_1$ and $\lambda_2$, 
we get that for any $\ell\in\{1,\dots, k\}$ there exists
$t_r^\ell\in (\alpha^k_r,\beta^k_r)$ such that $u_r^{(\ell)}(t_r^\ell)=0$, and hence $x_r^\ell\in (0,1)$ such that $v_r^{(\ell)}(x_r^\ell)=0$. Moreover, 
there exists 
$t_r^0\in (\alpha^k_r,\beta^k_r)$ with $u_r(t_r^0)=\lambda_1$; that is, there exists $x_r^0\in (0,1)$ such that $v_r(x_r^0)=0$. 
An iterated application of the Fundamental Theorem of Calculus then ensures that 
$$\|v_r\|^2_{H^k(0,1)}\leq C(T) \|v_r^{(k)}\|_{L^2(0,1)}^2.$$
We can apply Lemma \ref{stimaseminorma} with $u=v_r^{(k)}$ and $I=J=(0,1)$, so that $\int_J v_r(t)dt =v_r^{(k-1)}(1)-v_r^{(k-1)}(0)=0$. It follows that 
$$\|v_r^{(k)}\|_{L^2(0,1)}^2\leq C [v_r]^2_{k+s}(0,1).$$ 
Since $M_r\to+\infty$, the term in the right-hand side is infinitesimal by \eqref{stisem}, 
so that $v_r$ converges to $0$ in ${H^k(0,1)}$. We then deduce that $v_r$ uniformly converges to $0$. This gives a contradiction, since for all $r$ there exist points $x_r\in(0,1)$ such that $v_r(x_r)=\lambda_2-\lambda_1$. 
\medskip \goodbreak

\noindent {\em Case $2$: $\sup_{r}\frac{|I_r|}{\e_r}=+\infty$.} 

\smallskip
We can suppose, up to subsequences, that $\frac{|I_r|}{\e_r}$ diverges. 
Hence, for $r$ large enough, 
we can choose an interval $J_r=(\alpha^\prime_r, \alpha^\prime_r+4L\e_r)\subset I_r$, where $L=L_{k,s}(\eta, S)$ is the positive constant defined in \eqref{defL}, containing at least a transition interval for $u_r$ between $\lambda_1$ and $\lambda_2$.  
Then, since \eqref{lowen2} holds, and $M_r\to+\infty$, we can apply Lemma \ref{datibordo} and deduce that for $r$ large enough and for all $\ell\in\{1,\dots, k\}$ there exists $t^\ell_r\in J_r$ such that 
$|u_r^{(\ell)}(t^\ell_r)|<\frac{1}{\e_r^{\ell}}$; moreover, since $J_r$ contains a transition interval for $u_r$, there exists $t_r^0\in J_r$ such that   $u_r(t_r^0)=\lambda_2$. 

Now,  either $(\alpha^\prime_r, \alpha^\prime_r+4L\e_r)$ contains $2k+3$ disjoint maximal subintervals of $A_{\e_r}^\eta$, or contains a subinterval of $A_{\e_r}^\eta$ with length strictly greater than $\widehat R_{k,s}\e_r$. The same holds for $(\alpha^\prime_r+3L\e_r, \alpha^\prime_r+4L\e_r)$. Hence, by Remark \ref{dernulla} in the first case, or by Lemma \ref{stimaintervallo} otherwise, there exist $t_r^-\in (\alpha^\prime_r, \alpha^\prime_r+L\e_r)$ and $t_r^+\in (\alpha^\prime_r+3L\e_r, \alpha^\prime_r+4L\e_r)$ such that $|u_r^{(k-1)}(t_r^-)|<\frac{1}{\e_r^{k-1}}$ and $|u_r^{(k-1)}(t_r^+)|<\frac{1}{\e_r^{k-1}}$. 

Setting $v_r(x)=u_r(\alpha_r^\prime+4L\e_r x)-\lambda_1$, recalling \eqref{lowen2} we obtain the estimate 
\begin{equation}\label{stisem2}
[v_r]^2_{k+s}(0,1)
\leq (4L)^{2(k+s)-1}\frac{S}{M_r}. 
\end{equation} 
By the Fundamental Theorem of Calculus it follows that 
$$\|v_r\|^2_{H^k(0,1)}\leq C(L) \|v_r^{(k)}\|_{L^2(0,1)}^2.$$ 
We can then apply Lemma \ref{stimaseminorma} with $u=v_r^{(k)}$, $I=(0,1)$ and  $J=(x_r^-,x_r^+)$, 
where  
$x_r^-=\frac{t_r^--\alpha_r^\prime}{4L\e_r}$ and 
$x_r^+=\frac{t_r^+-\alpha_n^\prime}{4L\e_r}$,  
since $x_r^+-x_r^-\geq \frac{1}{2}$.  
Recalling that both $|v_r^{(k-1)}(x_r^+)|$ and $|v_r^{(k-1)}(x_r^-)|$ are not greater than $1$, it follows that  
$$\|v_r^{(k)}\|_{L^2(0,1)}^2\leq C \Big(1+[v_r]^2_{k+s}(0,1)\Big),$$
and $v_r$ is then equibounded in $H^k(0,1)$. 
Up to subsequences, $\{v_r\}$ uniformly converges to $v\in H^k(0,1)$;  
then, recalling that 
\begin{equation}\label{stiW}
\int_{0}^1 W(v_r+\lambda_1)\, dx=\frac{1}{4L\e_r}\int_{J_r} W(u_r)\, dx\leq \frac{1}{4L}\frac{S}{M_r},  
\end{equation}
we obtain 
\begin{equation}\label{assurdo2}
0=\liminf_{r\to+\infty}\int_{0}^1 W(v_r+\lambda_1)\, dx\geq \int_{0}^1 W(v+\lambda_1)\, dx,  
\end{equation}
which is a contradiction since $v+\lambda_1$ cannot be constantly equal to $1$ or $-1$, since for all $r$ there exist points where $v_r=0$ and  points where $v_r=\lambda_2-\lambda_1$, corresponding to the endpoints of a transition interval for $u_r$.
\end{proof}

\begin{remark}\rm In the following proof we use Proposition \ref{numbound} applied to $\eta$-transitions. Note that in the following it will also be used in the proof of the lower bound to estimate the number of transitions e.g.~between $1+\eta$ and $1+2\eta$.
\end{remark}

\begin{proof}[Proof of Theorem {\rm\ref{compth}}] 
Let $\{u_\e\}$ be such that $u_\e\in H^{k+s}(0,1)$ and 
$\sup_\e F_\e(u_\e)=S<+\infty$. As already noted, we can suppose that $u_\e\in H^{k+s}(0,1)\cap C^\infty([0,1])$ by the density of such functions with respect to the convergence in $H^{k+s}(0,1)$ (see e.g.~\cite[Theorem 1.64]{leofrac}).

Let $\eta\in(0,\min\{1,\sqrt{\beta_W}\})$, and $\e_r\to 0$. 
We can apply Proposition \ref{numbound}  with the choice $\lambda_1=-1+\eta$ and $\lambda_2=1-\eta$, obtaining that the number $\#\mathcal I_{\e_r}^{\eta}$ of the $\eta$-transition intervals is equibounded by a constant $M_\eta$ depending only on $\eta$. Since  for all $r$  we have $\#\mathcal I_{\e_r}^{\eta}\leq \#\mathcal I_{\e_r}^{\eta^\prime}$ if $\eta<\eta^\prime$,  
we obtain that 
$$\# \{\eta\hbox{\rm-transition interval for } u_r\}\leq M_0=M_{\eta_0}$$
for all $r$ and $\eta<\eta_0:=\min\{1,\sqrt{\beta_W}\}$. 
The complementary set of the union of the $\eta$-transition intervals for $u_r$ is then given by the union of an equibounded number of maximal intervals where $u_r(t)>-1+\eta$, denoted by $A^\eta_r(1)$, and the union of an equibounded number of maximal intervals where $u_r(t)<1-\eta$, denoted by $A^\eta_r(-1)$. 
We set 
$$u_{r}^\eta(t)=\begin{cases} 
1 & \hbox{\rm if } t\in A_r^\eta(1)\\
-1 & \hbox{\rm if } t\in (0,1)\setminus A_r^\eta(1). 
\end{cases}$$
Note that the discontinuity points of $u_r^\eta$ are endpoints of $\eta$-transition intervals, and hence they are bounded by $M_0$. Up to subsequences then $u_r^\eta$ converges as $r\to +\infty$ to some $u\colon(0,1)\to \{-1,1\}$ with at most $M_0$ discontinuity points. Noting that
\begin{eqnarray*}
&&\hskip-1.5cm\int_{\{|u_r|<2\}}|u_r-u_r^\eta|^2dt\\
&=& \int_{A_r^\eta(1)\cap\{ |u_r|<2\}}|u_r-1|^2dt+
\int_{A_r^\eta(-1)\cap \{|u_r|<2\}}|u_r+1|^2dt\\
&\le& \frac1{\alpha_W}\int_0^1 W(u_r)dt\le \e_r S
\end{eqnarray*}
and that $|\{|u_r|\ge 2\}|\to 0$ as $r\to+\infty$ we conclude that $u_r$ converges to $u$ in measure.

\smallskip
Finally, we prove the the last claim. To this end, note that there exists $\overline z$ such that $W(z)\ge \frac{c_1}2 |z|^p$ if $|z|\ge \overline z$, so that 
$$
\int_{\{t: |u_\e(t)|\ge \overline z\}}|u_\e|^pdx\le \frac{2\e}{c_1} S.
$$
From this, the convergence in measure of $u_\e$ implies the convergence in $L^p$.
\end{proof}

\section{Lower bound} 
The proof of Theorem \ref{main} will be subdivided into a lower and an upper bound. Here we begin by proving the lower bound. To that end, we define the auxiliary interfacial constant
\begin{eqnarray}\label{emmetilde}\nonumber
&&\widetilde m_{k+s}=\inf_{T>0} \inf\bigg\{\int_{-\infty}^{+\infty} W(v)dt + [v]^2_{k+s}(\mathbb R): v\in H^{k+s}_{\rm loc} (\mathbb R), \\
&&\hskip3cm v(x)=-1\hbox{ \rm if }x\le -T,v(x)=1\hbox{ \rm if }x\ge T\Big\}.
\end{eqnarray}

\begin{proposition}\label{lo-bo} Let $u\in BV((0,1);\{-1,1\})$, let $S(u)$ be its set of discontinuity points, and let $u_\e\to u$ in measure. Then we have
\begin{equation}
\liminf_{\e\to 0} F_\e(u_\e)\ge \widetilde m_{k+s}\,\#(S(u)),
\end{equation}
where $\widetilde m_{k+s}$ is given by \eqref{emmetilde}.
\end{proposition}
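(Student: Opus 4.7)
My plan is to localize the energy at each jump of $u$ and produce, by cut-off and rescaling, a competitor admissible for the problem defining $\widetilde m_{k+s}$. We may assume $\liminf_\e F_\e(u_\e)<+\infty$ and, passing to a subsequence, that $F_\e(u_\e)$ converges to the liminf. By Theorem \ref{compth}, $S(u)=\{x_1,\ldots,x_N\}$ is finite. Choose $\delta>0$ so small that the intervals $I_j=(x_j-\delta,x_j+\delta)$ are pairwise disjoint subsets of $(0,1)$ on each of which $u$ has a single jump. Since $F_\e$ is superadditive over disjoint subsets, it suffices to show
$$
\liminf_{\e\to 0}F_\e(u_\e;I_j)\ge\widetilde m_{k+s}\qquad\text{for each }j,
$$
and then sum over $j$.

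Fix $j$ and a small $\eta\in(0,\eta_0)$. Proposition \ref{numbound}, applied to $I_j$ with $\lambda_1=-1+\eta$ and $\lambda_2=1-\eta$, bounds the number of $\eta$-transition intervals of $u_\e$ in $I_j$ uniformly in $\e$. Together with the convergence in measure $u_\e\to u$ and the fact that $u$ jumps once in $I_j$, this forces at least one $\eta$-transition $T_\e\subset I_j$ to realize the net change between the two wells; the remaining $\eta$-transitions contribute non-negative energy that we may discard. Since both components of $I_j\setminus T_\e$ still have length comparable to $\delta$, in particular much greater than $\e$, the remark following Lemma \ref{datibordo} applies on each side and yields points $z_\e^-<z_\e^+$ in $I_j$ with $|u_\e(z_\e^\pm)\mp 1|<\eta$ and $|u_\e^{(\ell)}(z_\e^\pm)|<\e^{-\ell}$ for every $\ell\in\{1,\ldots,k\}$.

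I then rescale by $v_\e(y)=u_\e\big(\tfrac{z_\e^-+z_\e^+}{2}+\e y\big)$ on the interval $\Omega_\e=\big(\tfrac{z_\e^--z_\e^+}{2\e},\tfrac{z_\e^+-z_\e^-}{2\e}\big)$, which expands to $\mathbb R$. A direct change of variables gives
$$
\int_{\Omega_\e}W(v_\e)\,dy+[v_\e]^2_{k+s}(\Omega_\e)=F_\e(u_\e;(z_\e^-,z_\e^+))\le F_\e(u_\e;I_j),
$$
while at the endpoints $y_\e^\pm$ of $\Omega_\e$ we have $|v_\e(y_\e^\pm)\mp 1|<\eta$ and $|v_\e^{(\ell)}(y_\e^\pm)|<1$ for $\ell=1,\ldots,k$. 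On unit-length corridors $[y_\e^--1,y_\e^-]$ and $[y_\e^+,y_\e^++1]$ I paste a Hermite polynomial of degree $2k+1$ matching $v_\e$ and its first $k$ derivatives at the inner endpoint and matching $\pm 1$ with vanishing derivatives at the outer endpoint, extending by $\pm 1$ beyond. The resulting $\tilde v_\e$ is admissible for the problem defining $\widetilde m_{k+s}$.

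The main obstacle is to show that the cost of the cut-off vanishes in the successive limits $\e\to 0$, $\eta\to 0$. The potential part follows from (H2) since $\tilde v_\e$ stays $O(\eta)$-close to $\pm 1$ on the corridors and is exactly $\pm 1$ beyond. For the fractional seminorm $[\tilde v_\e]^2_{k+s}(\mathbb R)$, I decompose it into contributions from $\Omega_\e$ (already accounted for by the displayed inequality), from the corridors, and from cross terms with the outer region where $\tilde v_\e^{(k)}\equiv 0$. The Gagliardo kernel $|x-y|^{-1-2s}$ is integrable at infinity since $s>0$, so these extra contributions are bounded by a constant multiple of the $C^k$-norm of the Hermite interpolant on the corridors, which depends linearly on the endpoint data of $v_\e$ at $y_\e^\pm$. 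To upgrade this to $o(1)$ I apply Proposition \ref{interpolazione} on small subintervals of $\Omega_\e$ centred at $y_\e^\pm$, where $v_\e$ is $L^2$-close to $\pm 1$ (since we are in $A_\e^\eta$) and the localized $s$-seminorm of $v_\e^{(k)}$ is controlled by a vanishing tail of $F_\e(u_\e;I_j)$; this forces $v_\e^{(\ell)}$ to be $L^2$-small near the endpoints, so the Hermite matching data can be chosen arbitrarily small. Passing to the liminf in $\e$ and then letting $\eta\to 0$, admissibility of $\tilde v_\e$ yields $\widetilde m_{k+s}\le\liminf_\e F_\e(u_\e;I_j)$, and summation over $j$ concludes.
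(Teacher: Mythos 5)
Your overall strategy---localize at each jump, modify $u_\varepsilon$ near the jump, rescale to obtain a competitor for $\widetilde m_{k+s}$---is indeed the same as the paper's; the genuine difficulty in both approaches is showing that the cost of the modification vanishes, and this is exactly where your proposal has gaps.

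First, the derivative data you obtain at the endpoints are only $O(1)$, not $o(1)$. After the rescaling $v_\e(y)=u_\e(\cdot+\e y)$, Lemma~\ref{datibordo} and the remark following it give $|v_\e^{(\ell)}(y_\e^\pm)|<1$, so the Hermite polynomial $p$ you build on the unit corridor has $\|p\|_{C^{k+1}}$ of order one, hence $[p^{(k)}]_s^2$ on the corridor is of order one, and the total energy of $\tilde v_\e$ exceeds $F_\e(u_\e;I_j)$ by an amount that does not tend to zero. Second, the proposed ``upgrade'' does not close this gap: the interpolation inequality \eqref{interpolazione-eq} controls $L^2$ norms of $v_\e^{(\ell)}$, and $L^2$-smallness on a unit interval does not give pointwise smallness at any preassigned point, so you cannot directly shrink the Hermite matching data. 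What is missing is a pigeonhole localization: one must divide a region of length $O(\tau)$ into $N$ pieces and select a subinterval where the energy is $\le S/N$, and then re-run the stimaintervallo-type argument to get derivative data of order $1/\sqrt N$ in the natural scale. You gesture at ``a vanishing tail of $F_\e(u_\e;I_j)$'' but never identify the quantity that vanishes, nor why. Third, the cross-terms of the Gagliardo seminorm at the junction are not ``bounded by a constant multiple of the $C^k$-norm of the Hermite interpolant'': the integral
$\int_{y_\e^+-1}^{y_\e^+}\int_{y_\e^+}^{y_\e^++1}\frac{|v_\e^{(k)}(x)-p^{(k)}(y)|^2}{|x-y|^{1+2s}}\,dy\,dx$
also involves $|v_\e^{(k)}(x)-v_\e^{(k)}(y_\e^+)|$, i.e.\ the $H^s$-regularity of $v_\e^{(k)}$ near the endpoint, which must be estimated separately.

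The paper avoids these problems by a different device. It does not extend over a fixed-length corridor in the rescaled variable; instead, before rescaling, it uses a multiplicative cut-off $(u_\e\mp 1)\phi_\e^\pm\pm 1$ where $\phi_\e^\pm$ transitions over the scale $\e^s$ (which after rescaling becomes a diverging length $\e^{s-1}$, i.e.\ a slowly varying cut-off), centred at the midpoint of an interval $I_\e^\pm$ of length $\sim\tau/N$ chosen by pigeonhole so that $F_\e(u_\e;I_\e^\pm)\le S/N$. The choice of the scale $\e^s$ makes $|\phi_\e^{(m)}|\sim\e^{-ms}$ exactly compatible with the interpolation bound \eqref{interp-3}, and the pigeonhole makes the error $C/N$; the remaining cross-terms with the constant tails are controlled separately in Step~2, using the points $\sigma^\pm$ of small derivatives, and yield an error $c_\sigma\to0$. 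Until you incorporate a pigeonhole localization and a quantitative estimate of the junction cross-terms, the claim ``admissibility of $\tilde v_\e$ yields $\widetilde m_{k+s}\le\liminf_\e F_\e(u_\e;I_j)$'' does not follow.
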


\begin{proof}
By density we can assume that $u_\e\in C^\infty([0,1])$. Moreover, it suffices to consider the case  $S=\sup_\e F_\e(u_\e)<+\infty$.  We fix a subsequence $\e_r$ tending to $0$ such that there exists the limit $$\lim_{r\to+\infty} F_{\e_r}(u_{\e_r})=
\liminf_{\e\to 0} F_{\e}(u_{\e}).
$$
For the sake of notation we relabel this subsequence again by $u_\e$.
We now prove a lower bound for the limit above. This will be done by constructing suitable test functions for $\widetilde m_{k+s}$ whose energy is not greater than that of $u_\e$ close to its jump points. 

Let $\eta>0$ be fixed. We preliminarily note that Proposition \ref{numbound} ensures that the number of the transitions of $u_\e$ between $1+\eta$ and $1+2\eta$, between $-1-2\eta$ and $-1+\eta$ and between $-1+\eta$ and $1-\eta$ is equibounded by a constant $M(\eta)$; that is, 
\begin{equation}\label{boundtrans} 
\max\big\{\#\mathcal I_\e(1+\eta, 1+2\eta), \#\mathcal I_\e(-1-2\eta, -1-\eta), \#\mathcal I_\e(-1+\eta, 1-\eta)\big\}\leq M(\eta).
\end{equation}  

Let 
$\{x_1,\dots, x_M\}$ denote the set of the jump points of $u$, and let  
$T^\prime>0$ be such that for all $j\in\{1,\dots, M\}$ the intervals $[x_j-T^\prime,x_j+T^\prime]$ are pairwise disjoint.  

We now fix $j\in\{1,\dots, M\}$ and consider the set $[x_j-T^\prime,x_j+T^\prime]$. For the sake of simplicity of notation, we apply a translation and assume that $x_j=0$; furthermore,  it is not restrictive to assume $u=-1$ in $[-T^\prime,0)$, $u=1$ in $(0,T^\prime]$. 
Since the number of the transitions as above is equibounded by \eqref{boundtrans}, and the length of each transition interval is at most of order $\e$, up to subsequences we can assume that there exists a finite number of limit points $\{y_n\}$ of the transition intervals; we fix $\tau>0$ such that $3\tau<\min\{|y_n|: y_n\neq 0\}$. 

For each fixed $\sigma\in (0,\tau)$ (which will be chosen small in the following) there exists $\e(\eta, \sigma)\in(0,1)$ such that the sets $(-3\tau,-\sigma)$ and $(\sigma,3\tau)$ do not contain transition intervals for all the functions $u_\e$ with $\e<\e(\eta, \sigma)$. 
Since $u_\e\to u$ in measure, it follows that 
\begin{eqnarray}\label{ur1}
&u_\e(t) \in (-1-2\eta, -1+2\eta) \ \ &\hbox{\rm if } \ t\in (-3\tau,-\sigma)\nonumber \\
&u_\e(t) \in (1-2\eta, 1+2\eta) \ \ &\hbox{\rm if } \ t\in (\sigma,3\tau)
\end{eqnarray} 
for all $\e\in (0,\e(\eta,\sigma))$. 
If $k\geq 1$, for all $\sigma$ there exist $\sigma^-\in (-2\sigma,-\sigma)$ and $\sigma^+\in (\sigma,2\sigma)$ such that 
\begin{equation}\label{dtj}
\max\{|u_\e^{(\ell)}(\sigma^-)|, |u_\e^{(\ell)}(\sigma^+)|\}<\frac{1}{\e^{\ell}} 
\end{equation}
for all $\ell\in\{1,\dots, k\}$. Indeed, both intervals $(-2\sigma,-\sigma)$ and $(\sigma,2\sigma)$ satisfy the 
hypotheses of Proposition \ref{stimaintervallo}. 
Condition \eqref{dtj} will be used in the following to construct extensions to the constant values $-1$ and $1$ on the whole $\mathbb R$.

Let $N\in\mathbb N$ be fixed. 
We divide $(\tau,2\tau)$ and $(-2\tau,-\tau)$ in $N$ intervals with equal length; 
then, there exists $n^+_\e$ and $n_\e^-$ in $\{1,\dots, N\}$ such that, 
letting $I^+_\e$ and $I^-_\e$ denote the corresponding interval, respectively, 
\begin{equation}\label{enb}
\frac{1}{\e}\int_{I_\e^+}W(u_\e)\, dx+\e^{2(k+s)-1}\int_{-3\tau}^{3\tau}\int_{I_\e+}\frac{|u_\e(x)-u_\e(y)|^2}{|x-y|^{2s+1}}\,dx\,dy\leq \frac{S}{N}, 
\end{equation}
and the same holds for $I_\e^-$. 
Let $x^+_\e$ and $x_\e^-$ denote the center of the interval $I^+_\e$ and $I_\e^-$, respectively.  

We now construct functions $v_\e$ defined on the whole $\mathbb R$, first  in $(-3\tau, 3\tau)$ using a cut-off argument between $u_\e$ and $-1$ and between in $u_\e$ and $1$ and then extended as the constant $-1$ and $1$ in the two half lines $(-\infty, -3\tau]$ and $[3\tau,+\infty)$, respectively.

Let $\phi\colon\mathbb R\to[0,1]$ be such that $\phi\in C^k(\mathbb R)$, $\phi\equiv 1$ in $(-\infty,-1]$ e $\phi\equiv 0$ in $[1,+\infty)$; we set  
\[\phi^-_\e(x)=\phi\Big(\displaystyle\frac{x_\e^{-}-x}{\e^s}\Big) \ \ \hbox{\rm and } \ \  \phi^+_\e(x)=\phi\Big(\displaystyle\frac{x-x_\e^{+}}{\e^s}\Big), 
\]
and define 
$$v_\e(x)=\begin{cases} 
-1 & \hbox{\rm if } x\leq -3\tau\\
(u_\e(x)+1)\phi_\e^-(x)-1 & \hbox{\rm if } -3\tau\leq x\leq 0\\
(u_\e(x)-1)\phi_\e^+(x)+1 & \hbox{\rm if } 0\leq x\leq 3\tau\\
1 & \hbox{\rm if } x\geq 3\tau. 
\end{cases}$$  
Note that $v_\e(x)= -1$ for $x\le x_\e^--\e^s$, $v_\e(x)=u_\e(x)$ for $x_\e^-+\e^s\le x\le x_\e^+-\e^s$, and $v_\e(x)= 1$ for $x\ge x_\e^++\e^s$.

The functions $w_\e(x)=v_\e(\e x)$ are test functions for $\widetilde m_{k+s}$ with $T=3\tau/\e$. We will 
compare their related energies with $F_\e(u_\e,(-3\tau, 3\tau))$. 
\smallskip

We subdivide the rest of the proof in two computations. First, we estimate the energy of $v_\e$ inside $(-3\tau, 3\tau)$, and then, in the second step, we compute the energy of $w_\e$ on the whole line to estimate $\widetilde m_{k+s}$. 

\medskip
\noindent{\em Step $1$.} 
First, we prove that for each fixed $\delta>0$ we have 
\begin{equation}\label{step1}
F_\e(v_\e;(-3\tau,3\tau))\leq (1+\delta)F_\e(u_\e; (-3\tau, 3\tau))+\frac{C}{N}+o(1)_{\e\to 0},   
\end{equation}
where $N\in\mathbb N$ and $C>0$ is independent of $\e$ and $N$. 

We start by proving that %{\color{red} for all $\e$ small enough} 
\begin{eqnarray}\label{02T} 
\e^{2(k+s)-1}[v_\e]^2_{k+s}(-3\tau,3\tau)&\leq&(1+\delta)\e^{2(k+s)-1}[u_\e]^2_{k+s}(-3\tau,3\tau)\nonumber \\
&&+\frac{C}{N}+o(1)_{\e\to 0}. 
\end{eqnarray}
To this end, we preliminary note that, in the case $k\geq 1$, if $I$ is an interval such that $|u_\e-1|\leq 2\eta$ or  $|u_\e+1|\leq 2\eta$ in $I$, then the following estimate holds 
\begin{equation}\label{interp-3}
\|u_\e^{(\ell)}\|^2_{L^2(I)}\leq C\e^{1-2\ell} F_{\e}(u_\e;I) 
\end{equation}
for all $\e$ small enough, with $C$ only depending only $k$ and $s$, for all $\ell\in \{1,\dots, k\}$. To prove this, we can assume that $|u_\e-1|\leq 2\eta$, the other case being completely analogous. In this case, 
\begin{equation}\label{l2}
\|u_\e-1\|^2_{L^2(I)}\leq \frac{1}{\alpha_W}\int_I W(u_\e)\, dx.  
\end{equation} 
Then, interpolation inequality \eqref{interpolazione-eq} and estimate \eqref{stconv} 
allow us to deduce that 
\begin{eqnarray*}
&&\hskip-1.5cm\|u_\e^{(\ell)}\|^2_{L^2(I)}\leq R^2_{k,s}\Big((|I|^{-2\ell}+\e^{-2\ell})\|u_\e-1\|^2_{L^2(I)}+\e^{2(k+s-\ell)}[u_\e]^2_{k+s}(I)\Big) \\
&&\ \ \leq\ 2R^2_{k,s}\max\{1,\alpha_W^{-1}\} \e^{1-2\ell} F_{\e}(u_\e;I)
\end{eqnarray*} 
for all $\ell\in \{1,\dots, k\}$ and $\e\leq |I|$, concluding the proof of \eqref{interp-3}.     

We now can prove \eqref{02T}. For notational convenience, we set
$$\phi_\e(z)=\begin{cases}
\phi_\e^{-}(z) & \hbox{\rm if } z\leq 0\\
\phi_\e^{+}(z) & \hbox{\rm if } z\geq 0.
\end{cases}$$ 
Using a discrete general Leibniz rule and that $|\phi_\e|\leq 1$, we have
\begin{eqnarray}\label{v+}
(v_\e^{(k)}(x)-v_\e^{(k)}(y))^2&\leq& (1+\delta)(u_\e^{(k)}(x)-u_\e^{(k)}(y))^2
\nonumber\\
&&+c_k\Big(1+\frac{1}{\delta}\Big)\sum_{\ell=0}^{k-1}(u_\e^{(\ell)}(x)-u_\e^{(\ell)}(y))^2
(\phi_\e^{(k-\ell)}(x))^2\nonumber\\
&&+c_k\Big(1+\frac{1}{\delta}\Big)\sum_{\ell=1}^{k}(u_\e^{(\ell)}(y))^2
(\phi_\e^{(k-\ell)}(x)-\phi_\e^{(k-\ell)}(y))^2\nonumber\\
&&+c_k\Big(1+\frac{1}{\delta}\Big)(u_\e(y)-1)^2
(\phi_\e^{(k)}(x)-\phi_\e^{(k)}(y))^2,
\end{eqnarray}
where $c_k$ is a positive constant depending only on $k$.

We now estimate the $s$-seminorm of $v_\e^{(k)}$ on $(-3\tau, 3\tau)$ using inequality \eqref{v+}.
We subdivide the computation into integrals in the following three sets.
\begin{enumerate}
\item both $x$ e $y$ not belonging to $I_\e^{+}\cup I_\e^{-}$;
\item $y\in I^+_\e$ and $x\in (-3\tau,3\tau)\setminus I^+_\e$. This estimate also holds for $y\in I^-_\e$ and $x\in (-3\tau,3\tau)\setminus I^-_\e$;
\item $x,y\in I_\e^+$. This estimate also holds for $x,y\in I_\e^-$.
\end{enumerate}

In the following computations, $C$ will denote a positive constant not depending on $N$ and  $\e$, possibly changing from line to line.

\medskip

\noindent{\em Case $1\colon$} $x,y\in A_\e:=(-3\tau,3\tau)\setminus (I_\e^+\cup I_\e^-)$. 

\smallskip
We preliminarily note that, if both $x$ and $y$ do not belong to $I_\e^+\cup I_\e^-$, then 
$\phi_\e^{(k-\ell)}(x)=\phi_\e^{(k-\ell)}(y)=0$ for all $\ell\in\{0,\dots,k-1\}$, and \eqref{v+} becomes 
\begin{eqnarray}\label{v+outs}
(v_\e^{(k)}(x)-v_\e^{(k)}(y))^2&\leq&(1+\delta)(u_\e^{(k)}(x)-u_\e^{(k)}(y))^2
\nonumber\\
&&+c_k\Big(1+\frac{1}{\delta}\Big)(u_\e^{(k)}(y))^2
(\phi_\e(x)-\phi_\e(y))^2.
\end{eqnarray}
Setting $I_\e^+=(a^+_\e,b^+_\e)$ and $I_\e^-=(b^-_\e,a^-_\e)$, it is sufficient to consider the case $x\in (a^-_\e, a^+_\e)$, $y\in (-3\tau, b_\e^-)\cup (b_\e^+,3\tau)$, where $|\phi_\e(x)-\phi_\e(y)|=1$. 
Indeed, if $y\in (a^-_\e, a^+_\e), x\in (-3\tau, b_\e^-)\cup (b_\e^+,3\tau)$, we can reason in the same way by exchanging the role of $x$ and $y$ in \eqref{v+outs}, while 
if both $x$ and $y$ belong to $(a^-_\e, a^+_\e)$ or to $(-3\tau, b_\e^-)\cup (b_\e^+,3\tau)$ then $\phi_\e(x)=\phi_\e(y)$.

We first estimate
\begin{eqnarray}%\label{dd}
\int_{a^-_\e}^{a^+_\e}\int_{b^+_\e}^{3\tau}\frac{|u_\e^{(k)}(y)|^2}{|x-y|^{1+2s}}\, dx\,dy
&\leq& 
\frac{a_\e^+-a_\e^-}{(b^+_\e-a^+_\e)^{1+2s}}\|u_\e^{(k)}\|^2_{L^2(\beta_\e,3\tau)}\nonumber\\
&\leq& \frac{4N^{1+2s}}{\tau^{2s}}\|u_\e^{(k)}\|^2_{L^2(\tau,3\tau)}\nonumber\\
&\leq&CN^{1+2s}\e^{1-2k}\nonumber, 
\end{eqnarray} 
where in the last inequality we have applied \eqref{interp-3} with $\ell=k$ and $I=(\tau,3\tau)$,  since in this interval  $|u_\e-1|\leq 2\eta$. 

Analogously, applying \eqref{interp-3} in the interval $(-3\tau,-\tau)$, where $|u_\e+1|\leq 2\eta$,  we have
\begin{eqnarray}%\label{dd}
\int_{a^-_\e}^{a^+_\e}\int_{-3\tau}^{b^-_\e}\frac{|u_\e^{(k)}(y)|^2}{|x-y|^{1+2s}}\, dx\,dy
\ \leq\ CN^{1+2s}\e^{1-2k}\nonumber. 
\end{eqnarray} 
We then get
\begin{eqnarray}\label{dd}
&&\hspace{-1cm}\int_{%(-3\tau,3\tau)\setminus (I_\e^+\cup I_\e^-)
A_\e}\int_{%(-3\tau,3\tau)\setminus (I_\e^+\cup I_\e^-)
A_\e}\frac{|v_\e^{(k)}(x)-v_\e^{(k)}(y)|^2}{|x-y|^{1+2s}}\, dx\, dy\nonumber \\
&&\leq(1+\delta)\int_{%(-3\tau,3\tau)\setminus (I_\e^+\cup I_\e^-)
A_\e}\int_{%(-3\tau,3\tau)\setminus (I_\e^+\cup I_\e^-)
A_\e}\frac{|u_\e^{(k)}(x)-u_\e^{(k)}(y)|^2}{|x-y|^{1+2s}}\, dx\, dy
+CN^{1+2s}\e^{1-2k}. 
\end{eqnarray}

\smallskip
\noindent{\em Case $2\colon$ $x\in (-3\tau,3\tau)\setminus I_\e^+$ and $y\in I_\e^+$.}  

\smallskip
We first note  that, if $x,y\geq 0$,  
\begin{eqnarray*}
\frac{|\phi_\e^{(k-\ell)}(x)-\phi_\e^{(k-\ell)}(y)|^2}{|x-y|^2}&=&\e^{-2s(k-\ell)}\frac{\big|\phi^{(k-\ell)}\big(\frac{x-x_\e^+}{\e^s}\big)-\phi^{(k-\ell)}\big(\frac{y-x_\e^+}{\e^s}\big)\big|^2}{\e^{2s}\big|\frac{x-x_\e^+}{\e^s}-\frac{y-x_\e^+}{\e^s}\big|^2}\\
&\leq&\|\phi\|_{C^k(\mathbb R)}^2\, \e^{-2s(k-\ell+1)}
\end{eqnarray*}
for all $\ell\in\{0,\dots,k\}$. Then, for all $y\geq 0$ 
the following estimate holds: 
\begin{eqnarray}\label{phi+}
\int_{0}^{3\tau}\frac{|\phi_\e^{(k-\ell)}(x)-\phi_\e^{(k-\ell)}(y)|^2}{|x-y|^{1+2s}}\, dx&\leq& \frac{\|\phi\|_k^2}{\e^{2s(k-\ell+1)}}\int_0^{3\tau}|x-y|^{1-2s}\, dx\nonumber\\
%&\leq& \frac{\|\phi\|_k^2}{\e^{2s(k-\ell+1)}} \frac{(3\tau)^{2(1-s)}}{1-s}. 
&\leq& C\e^{-2s(k-\ell+1)}.
\end{eqnarray} 

If $x\leq 0$ and $y\geq 0$, it is sufficient to observe that 
\begin{equation*}%\label{sign}
|\phi_\e^{(k-\ell)}(x)- \phi_\e^{(k-\ell)}(y)|^2\leq 2\|\phi\|^2_{C^k(\mathbb R)}\e^{-2s(k-\ell)} 
\end{equation*} 
for all $\ell\in\{0,\dots, k\}$. 
Since $|x-y|\geq \tau$ if $x\in(-3\tau,0)$ and $y\in I_\e^+$, we then have \begin{equation}\label{phi-}
\int_{-3\tau}^{0}\frac{|\phi_\e^{(k-\ell)}(x)- \phi_\e^{(k-\ell)}(y)|^2}{|x-y|^{1+2s}}\, dx
\leq C\e^{-2s(k-\ell)} 
\end{equation} 
for all $y\in I_\e^+$ and $\ell\in\{0,\dots, k\}$. 

Inequalities  
\eqref{phi+} and \eqref{phi-} allow us to estimate 
\begin{eqnarray*}
&&\hspace{-2cm}\int_{I_\e^+}\int_{(-3\tau,3\tau)\setminus I_\e^+} |u_\e^{(\ell)}(y)|^2\frac{|\phi_\e^{(k-\ell)}(x)-\phi_\e^{(k-\ell)}(y)|^2}{|x-y|^{1+2s}}\, dx\, dy
\nonumber\\
%&&\leq \int_{I_\e^+}\int_{(-3\tau,0)} |u_\e^{(\ell)}(y)|^2\frac{|\phi_\e^{(k-\ell)}(x)-\phi_\e^{(k-\ell)}(y)|^2}{|x-y|^{1+2s}}\, dx\, dy+\int_{I_\e^+}\int_{(0,3\tau)\setminus I_\e^+} |u_\e^{(\ell)}(y)|^2\frac{|\phi_\e^{(k-\ell)}(x)-\phi_\e^{(k-\ell)}(y)|^2}{|x-y|^{1+2s}}\, dx\, dy\nonumber\\
&&\leq C(\e^{-2s(k-\ell)} 
+\e^{-2s(k-\ell+1)})\|u_\e^{(\ell)}\|^2_{L^2(I_\e^+)} %\nonumber\\
%&&\leq C\e^{-2s(k-\ell+1)}\|u_\e^{(\ell)}\|^2_{L^2(I_\e^+)} 
\end{eqnarray*}
for all $\ell\in\{1,\dots, k\}$, and 
\begin{eqnarray}\label{g0}
&&\hspace{-2cm}\int_{I_\e^+}\int_{(-3\tau,3\tau)\setminus I_\e^+} |u_\e(y)-1|^2\frac{|\phi_\e^{(k)}(x)-\phi_\e^{(k)}(y)|^2}{|x-y|^{1+2s}}\, dx\, dy
\nonumber\\
%&&\leq \int_{I_\e^+}\int_{(-3\tau,0)} |u_\e^{(\ell)}(y)|^2\frac{|\phi_\e^{(k-\ell)}(x)-\phi_\e^{(k-\ell)}(y)|^2}{|x-y|^{1+2s}}\, dx\, dy+\int_{I_\e^+}\int_{(0,3\tau)\setminus I_\e^+} |u_\e^{(\ell)}(y)|^2\frac{|\phi_\e^{(k-\ell)}(x)-\phi_\e^{(k-\ell)}(y)|^2}{|x-y|^{1+2s}}\, dx\, dy\nonumber\\
&&\leq C(\e^{-2sk} 
+\e^{-2s(k+1)})\|u_\e-1\|^2_{L^2(I_\e^+)}.
%\ \leq \ C\e^{-2s(k+1)}, 
\end{eqnarray} 
Since $|u_\e-1|\leq 2\eta$ in $I_\e^+$, 
we can apply the bound for $\|u_\e^{(\ell)}\|^2_{L^2(I)}$ given in \eqref{interp-3} with $I=I_\e^+$, obtaining 
\begin{eqnarray}\label{rf}
&&\hspace{-2cm}\int_{I_\e^+}\int_{(-3\tau,3\tau)\setminus I_\e^+} |u_\e^{(\ell)}(y)|^2\frac{|\phi_\e^{(k-\ell)}(x)-\phi_\e^{(k-\ell)}(y)|^2}{|x-y|^{1+2s}}\, dx\, dy
\nonumber\\
%&&\leq \int_{I_\e^+}\int_{(-3\tau,0)} |u_\e^{(\ell)}(y)|^2\frac{|\phi_\e^{(k-\ell)}(x)-\phi_\e^{(k-\ell)}(y)|^2}{|x-y|^{1+2s}}\, dx\, dy+\int_{I_\e^+}\int_{(0,3\tau)\setminus I_\e^+} |u_\e^{(\ell)}(y)|^2\frac{|\phi_\e^{(k-\ell)}(x)-\phi_\e^{(k-\ell)}(y)|^2}{|x-y|^{1+2s}}\, dx\, dy\nonumber\\
&&\leq\ C\e^{-2s(k-\ell+1)+1-2\ell}F_\e(u_\e;I_\e^+)\ 
%&&\leq C\e^{-2s(k-\ell+1)}\|u_\e^{(\ell)}\|^2_{L^2(I_\e^+)} 
\leq\ \frac{C}{N}\e^{-2k-2s+1}%F_\e(u_\e;I_\e^+)
\end{eqnarray}
for all $\ell\in\{1,\dots, k\}$, since $F_\e(u_\e;I_\e^+)\leq \frac{S}{N}$. 
As for \eqref{g0}, we can apply \eqref{l2}  to obtain 
\begin{eqnarray}\label{g0f}
&&\hspace{-2cm}\int_{I_\e^+}\int_{(-3\tau,3\tau)\setminus I_\e^+} |u_\e(y)-1|^2\frac{|\phi_\e^{(k)}(x)-\phi_\e^{(k)}(y)|^2}{|x-y|^{1+2s}}\, dx\, dy
\nonumber\\
%&&\leq \int_{I_\e^+}\int_{(-3\tau,0)} |u_\e^{(\ell)}(y)|^2\frac{|\phi_\e^{(k-\ell)}(x)-\phi_\e^{(k-\ell)}(y)|^2}{|x-y|^{1+2s}}\, dx\, dy+\int_{I_\e^+}\int_{(0,3\tau)\setminus I_\e^+} |u_\e^{(\ell)}(y)|^2\frac{|\phi_\e^{(k-\ell)}(x)-\phi_\e^{(k-\ell)}(y)|^2}{|x-y|^{1+2s}}\, dx\, dy\nonumber\\
&&\leq\ C
\e^{-2sk-2s+1}F_\e(u_\e;I_\e^+)\ \leq\ \frac{C}{N} 
\e^{-2sk-2s+1}.
%\ \leq \ C\e^{-2s(k+1)}, 
\end{eqnarray} 
%since $|u_\e-1|<2\eta$ in $I_\e^+$. 
%(u_\e(y)-1)^2
%(\phi_\e^{(k)}(x)-\phi_\e^{(k)}(y))^2

We now note that, for all $\ell\in\{0,\dots, k-1\}$, $|\phi_\e^{(k-\ell)}(x)|\leq \e^{-s(k-\ell)} \|\phi\|_{C^k(\mathbb R)}$, and that in particular it vanishes if $x\in (0,3\tau)\setminus I_\e^+$ and $x\in (-3\tau,0)\setminus I_\e^-$.   
Since $|x-y|\geq \tau$ if $x\in (-3\tau,0)$ and $y\in I_\e^+$, we then have 
\begin{eqnarray}\label{f}
&&\hspace{-2cm}\int_{I_\e^+}\int_{(-3\tau,3\tau)\setminus I_\e^+} 
\frac{|u_\e^{(\ell)}(x)-u_\e^{(\ell)}(y)|^2}{|x-y|^{1+2s}} |\phi_\e^{(k-\ell)}(x)|^2\, dx\, dy
\nonumber\\
%&&\leq \int_{I_\e^+}\int_{(-3\tau,0)} |u_\e^{(\ell)}(y)|^2\frac{|\phi_\e^{(k-\ell)}(x)-\phi_\e^{(k-\ell)}(y)|^2}{|x-y|^{1+2s}}\, dx\, dy+\int_{I_\e^+}\int_{(0,3\tau)\setminus I_\e^+} |u_\e^{(\ell)}(y)|^2\frac{|\phi_\e^{(k-\ell)}(x)-\phi_\e^{(k-\ell)}(y)|^2}{|x-y|^{1+2s}}\, dx\, dy\nonumber\\
&&\leq C\e^{-2s(k-\ell)} 
(\|u_\e^{(\ell)}\|^2_{L^2(I_\e^-)}+\|u_\e^{(\ell)}\|^2_{L^2(I_\e^+)})
%\nonumber\\
%&&\leq C\e^{-2s(k-\ell+1)}\|u_\e^{(\ell)}\|^2_{L^2(I_\e^+)} 
\ \leq \ \frac{C}{N}\e^{1-2k} 
\end{eqnarray}
for all $\ell\in\{1,\dots, k-1\}$, where we again used estimate \eqref{interp-3} and the bound on $F_\e(u_\e;I_\e^+)$ and on $F_\e(u_\e;I_\e^-)$. 
As for $\ell=0$, let $X_\e$ denote the subset of $I_\e^-$ where $\phi_\e^{(k)}\neq 0$, and note that its measure is not greater than $\frac{\tau\e^s}{N}$. Then,  
\begin{eqnarray}\label{f0}
&&\hspace{-2cm}\int_{I_\e^+}\int_{(-3\tau,3\tau)\setminus I_\e^+} 
\frac{|u_\e(x)-u_\e(y)|^2}{|x-y|^{1+2s}} |\phi_\e^{(k)}(x)|^2\, dx\, dy
\nonumber\\
%&&\leq \int_{I_\e^+}\int_{(-3\tau,0)} |u_\e^{(\ell)}(y)|^2\frac{|\phi_\e^{(k-\ell)}(x)-\phi_\e^{(k-\ell)}(y)|^2}{|x-y|^{1+2s}}\, dx\, dy+\int_{I_\e^+}\int_{(0,3\tau)\setminus I_\e^+} |u_\e^{(\ell)}(y)|^2\frac{|\phi_\e^{(k-\ell)}(x)-\phi_\e^{(k-\ell)}(y)|^2}{|x-y|^{1+2s}}\, dx\, dy\nonumber\\
&&\leq C\Big(\int_{I_\e^+} \int_{X_\e} |u_\e(x)|^2\, dx\, dy+
\int_{X_\e}\int_{I_\e^+}  |u_\e(y)|^2\, dy\, dx\Big)
\nonumber\\
&&\leq C\e^{-2sk} |X_\e| |I_\e^+|
%\nonumber\\
%&&\leq C\e^{-2s(k-\ell+1)}\|u_\e^{(\ell)}\|^2_{L^2(I_\e^+)} 
\ \leq \ \frac{C}{N}\e^{1-2k},  
\end{eqnarray}
since $u_\e$ is equibounded in $I_\e^+\cup I_\e^-$. 

Recalling \eqref{v+}, by \eqref{rf}, \eqref{g0f}, \eqref{f} and \eqref{f0}, we deduce that 
\begin{eqnarray}\label{tau+}
&&\hspace{-12mm}\int_{I_\e^+}\int_{(-3\tau,3\tau)\setminus I_\e^+}\frac{|v_\e^{(k)}(x)-v_\e^{(k)}(y)|^2}{|x-y|^{1+2s}}\, dx\, dy \nonumber\\
&\hspace{-2mm}\leq&\hspace{-2mm}(1+\delta) \int_{I_\e^+}\int_{(-3\tau,3\tau)\setminus I_\e^+}\frac{|u_\e^{(k)}(x)-u_\e^{(k)}(y)|^2}{|x-y|^{1+2s}}\, dx\, dy+\frac{C}{N}\e^{-2k-2s+1}. 
%\nonumber\\
%&&\hspace{-4mm}
%-2s(k-\ell+1)+1-2\ell =-2sk+2s\ell-2s+1-2\ell> -2sk+2sk-2s+1-2k=-2s+1-2k
%-2s(k+1)+1=-2sk+1-2s>2s+1-2k
%+C\e^{-2k-2s+1}
%F_{\e}(u_\e;I_\e^+) 
\end{eqnarray} 
%since $\e^{-2k-2s+1}\geq \e^{-2s(k-\ell+1)+1-2\ell}\geq \e^{-2s(k+1)+1}$, 

\smallskip 

\noindent{\em Case $3\colon$ $x,y\in I_\e^+$.}

\smallskip
 In this case, we also have to consider the sum up to $k-1$ of the non-vanishing terms
$(u_\e^{(\ell)}(x)-u_\e^{(\ell)}(y))^2((\phi_\e^+)^{(k-\ell)}(x))^2$. 
Noting that 
\begin{eqnarray*}
&&\hspace{-1cm}\int_{I_\e^+}\int_{I_\e^+} \frac{|u_\e^{(\ell)}(x)-u_\e^{(\ell)}(y)|^2}{|x-y|^{1+2s}} ((\phi_\e^+)^{(k-\ell)}(x))^2\, dx\, dy\\
&&\leq \e^{-2s(k-\ell)}\|\phi\|_{C^k(\mathbb R)}^2[u_\e]^2_{\ell+s}(I_\e^+),  
\end{eqnarray*}
we can apply \cite[Theorem 1.25]{leofrac} to estimate the $s$-seminorm of $u^{(\ell)}_\e$ as follows: 
\begin{equation}\label{semi}
[u^{(\ell)}_\e]^2_s(I_\e^+)\leq  \hat C(\|u_\e^{(\ell)}\|^2_{L^2(I_\e^+)}+\|u_\e^{(\ell+1)}\|^2_{L^2(I_\e^+)})
\end{equation}
for all $\ell\in\{1,\dots, k-1\}$,  
where the positive constant $\hat C$ only depends on $\ell,s$ and on the length of the interval $I_\e^+$ (which is independent of $\e$).  Note that for $\ell=0$ we have that 
\begin{equation}\label{semi0}
[u_\e]^2_{s}(I_\e^+)\leq  \hat C(\|u_\e-1\|^2_{L^2(I_\e^+)}+\|u_\e^{\prime}\|^2_{L^2(I_\e^+)}). 
\end{equation}
Since \eqref{interp-3} and \eqref{l2} hold, we then get 
\begin{equation}\label{semibis}
[u_\e]^2_{\ell+s}(I_\e^+)\leq  C\e^{-1-2\ell} F_{\e}(u_\e;I_\e^+)\leq \frac{C}{N}\e^{-1-2\ell}, 
\end{equation}
where $C$ does not depend on $\e$. 
We then conclude that 
\begin{eqnarray}\label{dop}
\int_{I_\e^+}\int_{I_\e^+} \frac{|u_\e^{(\ell)}(x)-u_\e^{(\ell)}(y)|^2}{|x-y|^{1+2s}} ((\phi_\e^+)^{(k-\ell)}(x))^2\, dx\, dy&\leq&\frac{C}{N}\e^{-2s(k-\ell)-1-2\ell}\nonumber\\
&\leq&\frac{C}{N}\e^{-2k-2s+1}.  
\end{eqnarray}

Recalling \eqref{phi+}, and applying the bound for $\|u_\e^{(\ell)}\|^2_{L^2(I)}$ given in \eqref{interp-3} with $I=I_\e^+$, similarly to the previous case we deduce  
\begin{eqnarray}\label{rfi}
&&\hspace{-2cm}\int_{I_\e^+}\int_{I_\e^+} |u_\e^{(\ell)}(y)|^2\frac{|\phi_\e^{(k-\ell)}(x)-\phi_\e^{(k-\ell)}(y)|^2}{|x-y|^{1+2s}}\, dx\, dy
\nonumber\\
%&&\leq \int_{I_\e^+}\int_{(-3\tau,0)} |u_\e^{(\ell)}(y)|^2\frac{|\phi_\e^{(k-\ell)}(x)-\phi_\e^{(k-\ell)}(y)|^2}{|x-y|^{1+2s}}\, dx\, dy+\int_{I_\e^+}\int_{(0,3\tau)\setminus I_\e^+} |u_\e^{(\ell)}(y)|^2\frac{|\phi_\e^{(k-\ell)}(x)-\phi_\e^{(k-\ell)}(y)|^2}{|x-y|^{1+2s}}\, dx\, dy\nonumber\\
&&\leq\ C\e^{-2s(k-\ell+1)+1-2\ell}F_\e(u_\e;I_\e^+)\ 
%&&\leq C\e^{-2s(k-\ell+1)}\|u_\e^{(\ell)}\|^2_{L^2(I_\e^+)} 
\leq\ \frac{C}{N}\e^{-2k-2s+1}%F_\e(u_\e;I_\e^+)
\end{eqnarray}
for all $\ell\in\{1,\dots, k\}$, since $F_\e(u_\e;I_\e^+)\leq \frac{S}{N}$. 
Moreover, by using \eqref{l2} we get  
\begin{eqnarray}\label{g0fi}
&&\hspace{-2cm}\int_{I_\e^+}\int_{I_\e^+} |u_\e(y)-1|^2\frac{|\phi_\e^{(k)}(x)-\phi_\e^{(k)}(y)|^2}{|x-y|^{1+2s}}\, dx\, dy
\nonumber\\
%&&\leq \int_{I_\e^+}\int_{(-3\tau,0)} |u_\e^{(\ell)}(y)|^2\frac{|\phi_\e^{(k-\ell)}(x)-\phi_\e^{(k-\ell)}(y)|^2}{|x-y|^{1+2s}}\, dx\, dy+\int_{I_\e^+}\int_{(0,3\tau)\setminus I_\e^+} |u_\e^{(\ell)}(y)|^2\frac{|\phi_\e^{(k-\ell)}(x)-\phi_\e^{(k-\ell)}(y)|^2}{|x-y|^{1+2s}}\, dx\, dy\nonumber\\
&&\leq C
\e^{-2sk-2s+1}F_\e(u_\e;I_\e^+)\ \leq \frac{C}{N} 
\e^{-2sk-2s+1}.
%\ \leq \ C\e^{-2s(k+1)}, 
\end{eqnarray}

By collecting \eqref{dop}, \eqref{rfi} and \eqref{g0fi}, it follows that 
\begin{eqnarray}\label{pp}
&&\hspace{-1cm}\int_{I_\e^+}\int_{I_\e^+}\frac{|v^{(k)}_\e(x)-v^{(k)}_\e(y)|}{|x-y|^{1+2s}}\, dx\, dy\nonumber\\
&\leq&(1+\delta)\int_{I_\e^+}\int_{I_\e^+}\frac{|u^{(k)}_\e(x)-u^{(k)}_\e(y)|}{|x-y|^{1+2s}}\, dx\, dy+\frac{C}{N}\e^{-2k-2s+1}. 
\end{eqnarray}

\medskip

We now 
estimate the part of the energy of $v_\e$ involving the double-well potential $W$. Since $v_\e=u_\e$ in $(-3\tau,3\tau)\setminus (I_\e^+\cup I_\e^-)$, and recalling the bound on the energy \eqref{enb}, it is sufficient to prove that  
there exists $C>0$ independent of $\e$ such that 
\begin{equation}\label{sW}
\frac{1}{\e}\int_{I_\e^+\cup I_\e^-}W(v_\e)\, dx\leq  \frac{C}{\e}\int_{I_\e^+\cup I_\e^-}W(u_\e)\, dx. 
\end{equation} 
The local hypotheses on $W$ in a neighbourhood of the minimum points ensure that 
$$\alpha_W (z-1)^2\leq W(z)\leq \gamma_W(z-1)^2$$  
for $|z-1|<\overline\eta$. 
Since for $x\in I_\e^+$ we have 
$$(u_\e(x)-1)\phi_\e^+(x)+1\in [\min\{u_\e(x),1\}, \max\{u_\e(x),1\}]\subseteq[1-2\eta,1+2\eta],$$ 
by choosing $\eta$ such that $2\eta<\overline\eta$ we get 
\begin{eqnarray*}
W(v_\e(x))&=&W((u_\e(x)-1)\phi_\e^+(x)+1)
\ \leq\ \gamma_W((u_\e(x)-1)\phi_\e^+(x))^2\\
&\leq& \gamma_W(u_\e(x)-1)^2 
\ \leq \ \frac{\gamma_W}{\alpha_W}W(u_\e(x)), 
\end{eqnarray*}
and, recalling \eqref{enb}, 
\begin{eqnarray*}
\frac{1}{\e}\int_{I_\e^+}W(v_\e)\, dx\leq\frac{\gamma_W}{\alpha_W}\frac{1}{\e}\int_{I_\e^+}W(u_\e(x))\, dx\leq \frac{C}{N}. 
\end{eqnarray*}
A completely analogous argument can be applied in $I_\e^-$.  

Since \eqref{02T} holds, this concludes the proof of estimate \eqref{step1}. 

\bigskip
\noindent{\em Step $2$.} 
We now can finally prove the desired estimate. More precisely, we now define $w_\e(x)=v_\e(\e x)$, and prove that 
\begin{eqnarray}\nonumber\label{dis-co-1}
&&\hspace{-1cm}\int_{\mathbb R} W(w_\e)\, dx+\int_{\mathbb R}\!\int_{\mathbb R} \frac{|w_\e^{(k)}(x)-w_\e^{(k)}(y)|^2}{|x-y|^{1+2s}}\, dx\, dy\\
&&\leq 
F_\e(v_\e;(-3\tau,3\tau))+c_\sigma+o(1)_{\e\to 0}, 
\end{eqnarray} 
with $c_\sigma$ independent of $\e$  and such that $c_\sigma\to 0$ as $\sigma\to 0$. 
The function $w_\e$ is an admissible test function for the problem defining $\widetilde m_{k+s}$ since by construction $v_\e(x)=1$ if $x\geq 2\tau$ and $v_\e(x)=-1$ if $x\leq -2\tau$. Hence, \eqref{dis-co-1} ensures we have   
\begin{eqnarray}\label{dis-co-2}
\widetilde m_{k+s} \leq F_\e(v_\e;(-3\tau,3\tau))+c_\sigma+o(1)_{\e\to 0}.  
\end{eqnarray}

To prove \eqref{dis-co-1}, we note that 
\begin{eqnarray*}
\int_{-\infty}^{+\infty} W(w_\e)\, dx=\int_{-\frac{3\tau}{\e}}^{\frac{3\tau}{\e}} W(w_\e)\, dx=\frac{1}{\e}\int_{-3\tau}^{3\tau} W(v_\e)\, dx, 
\end{eqnarray*}
and that 
\begin{eqnarray*}
&&\hskip-2cm\int_{-\frac{3\tau}{\e}}^{\frac{3\tau}{\e}} \int_{-\frac{3\tau}{\e}}^{\frac{3\tau}{\e}} \frac{|w_\e^{(k)}(x)-w_\e^{(k)}(y)|^2}{|x-y|^{1+2s}}\, dx\, dy\\
&&=\e^{2(k+s)-1}\int_{-3\tau}^{3\tau} \int_{-3\tau}^{3\tau} \frac{|v_\e^{(k)}(x)-v_\e^{(k)}(y)|^2}{|x-y|^{1+2s}}\, dx\, dy.  
\end{eqnarray*}
Hence, we only have to estimate the integral of $\frac{|w_\e^{(k)}(x)-w_\e^{(k)}(y)|^2}{|x-y|^{1+2s}}$ in the sets $(\frac{3\tau}{\e},+\infty)\times (-\frac{3\tau}{\e},\frac{3\tau}{\e})$, $(-\infty, -\frac{3\tau}{\e})\times (-\frac{3\tau}{\e},\frac{3\tau}{\e})$ and $(\frac{3\tau}{\e},+\infty)\times (-\infty, -\frac{3\tau}{\e})$. 

\smallskip
We first treat the case $k\geq 1$, for which $w_\e^{(k)}(x)=0$ if $|x|\ge \frac{2\tau}{\e}$, and write 
\begin{eqnarray}\label{sti-ca-1}\nonumber
&&\hskip-2cm\int_{\frac{3\tau}{\e}}^{+\infty}\!\!\int_{-\frac{3\tau}{\e}}^{\frac{3\tau}{\e}} \frac{|w_\e^{(k)}(x)-w_\e^{(k)}(y)|^2}{|x-y|^{1+2s}}\, dx\, dy\\&=&\nonumber\int_{-\frac{2\tau}{\e}}^{\frac{2\tau}{\e}} |w_\e^{(k)}(x)|^2 \!\int_{\frac{3\tau}{\e}}^{+\infty}\frac{1}{(y-x)^{1+2s}}\, dy\, dx\\
&\leq&\frac{1}{2s} \frac{\e^{2s}}{\tau^{2s}}\int_{-\frac{2\tau}{\e}}^{\frac{2\tau}{\e}} |w_\e^{(k)}(x)|^2 %\int_{\frac{\tau}{\e}}^{+\infty}(y-x)^{-1-2s}
%(\frac{\tau}{\e}-x)^{-2s}
\, dx.
\end{eqnarray}

We now recall that by  \eqref{dtj} there exist $\sigma^-\in[-2\sigma,-\sigma]$ and $
\sigma^+\in[\sigma,2\sigma]$ such that $|u_\e^{(k-1)}(\sigma^{\pm})|=|v_\e^{(k-1)}(\sigma^{\pm})|\leq \e^{1-k}$. 
Since $||v_\e|-1|\leq 2\eta$ in $A:=(-3\tau,3\tau)\setminus (\sigma^-,\sigma^+)$, we may use  interpolation inequality \eqref{interp-3} in $(-3\tau,\sigma^-)$ and in $(\sigma^+, 3\tau)$
 to estimate 
\begin{equation*}%\label{interp-w}
\|v_\e^{(k)}\|^2_{L^2(-3\tau,\sigma^-)}+\|v_\e^{(k)}\|^2_{L^2(\sigma^+, 3\tau)}\leq C\e^{1-2k} \big(F_{\e}(v_\e;(-3\tau,\sigma^-))+F_{\e}(v_\e;(\sigma^+, 3\tau))\big).
\end{equation*}
Since $w_\e^{(k)}(x)=\e^{k}v_\e^{(k)}(\e x)$, we get that $\int_{\frac{A}{\e}} |w_\e^{(k)}(x)|^2\, dx$ is equibounded. 
Recalling that 
$|v_\e^{(k-1)}(\sigma^{\pm})|\leq \e^{1-k}$, we  have that
$$|w_\e^{(k-1)}(\sigma^+)-w_\e^{k-1}(\sigma^-)|\leq 2$$ and that $[w_\e]^2_{k+s}$ is equibounded by $S$, using \eqref{stimaseminormaeq} with $J=I=(\frac{\sigma^-}{\e},\frac{\sigma^+}{\e})$ and $u=w_\e^{(k)}$, we deduce that 
\begin{eqnarray*}
\|w_\e^{(k)}\|^2_{L^2(\frac{\sigma^-}{\e},\frac{\sigma^+}{\e})}\leq C\Big(\frac{\e}{\sigma} + \frac{\sigma^{2s}}{\e^{2s}}\Big), 
\end{eqnarray*}
since $\sigma^-\in[-2\sigma,-\sigma]$ and $
\sigma^+\in[\sigma,2\sigma]$. 
Hence, using \eqref{sti-ca-1} and the fact that $\int_{\frac{A}{\e}} |w_\e^{(k)}(x)|^2\, dx$ is equibounded, we obtain 
\begin{eqnarray*}
\int_{\frac{3\tau}{\e}}^{+\infty}\int_{-\frac{3\tau}{\e}}^{\frac{3\tau}{\e}} \frac{|w_\e^{(k)}(x)-w_\e^{(k)}(y)|^2}{|x-y|^{1+2s}}\, dx\, dy
&\leq&C \frac{\e^{2s}}{\tau^{2s}} \Big(1+\frac{\e}{\sigma}+\frac{\sigma^{2s}}{\e^{2s}}\Big)
\\
&=&C\sigma^{2s}+o(1)_{\e\to 0}.
\end{eqnarray*} 
The analogous argument gives the same bound for the integral on $(\frac{3\tau}{\e},+\infty)\times (-\frac{3\tau}{\e},\frac{3\tau}{\e})$. 
As for the integral in $(\frac{3\tau}{\e},+\infty)\times (-\infty, -\frac{3\tau}{\e})$, we have that 
\begin{eqnarray*}
\int_{-\infty}^{-\frac{3\tau}{\e}} \int_{\frac{3\tau}{\e}}^{+\infty}  \frac{|w_\e^{(k)}(x)-w_\e^{(k)}(y)|^2}{|x-y|^{1+2s}}\, dx\, dy=0, 
\end{eqnarray*}
concluding the proof of \eqref{dis-co-1} if $k\ge 1$.

\smallskip
We now treat the case $k=0$, for which we have to take into account that $w_\e(x)=1$ if $x\ge \frac{2\tau}{\e}$ and $w_\e(x)=-1$ if $x\le -\frac{2\tau}{\e}$.
We then have
\begin{eqnarray*}
&&\hskip-2cm\int_{\frac{3\tau}{\e}}^{+\infty}\!\!\int_{-\frac{3\tau}{\e}}^{\frac{3\tau}{\e}} \frac{|w_\e(x)-w_\e(y)|^2}{|x-y|^{1+2s}}\, dx\, dy\\&=&\int_{-\frac{2\tau}{\e}}^{\frac{2\tau}{\e}} |w_\e(x)-1|^2 \!\int_{\frac{3\tau}{\e}}^{+\infty}\frac{1}{(y-x)^{1+2s}}\, dy\, dx\\
&\leq&\frac{1}{2s} \frac{\e^{2s}}{\tau^{2s}}\int_{-\frac{2\tau}{\e}}^{\frac{2\tau}{\e}} |w_\e(x)-1|^2 %\int_{\frac{\tau}{\e}}^{+\infty}(y-x)^{-1-2s}
%(\frac{\tau}{\e}-x)^{-2s}
\, dx.
\end{eqnarray*}

Let $x\in (-\sigma, \sigma)$, and consider a neighbourhood of $x$ with length strictly greater than 
$\frac{2S\e }{\alpha_W\eta^2}$ and included in $(-\tau,\tau)$. 
Since \eqref{mis} holds; that is, the measure of the set where
$||u_\e(t)|-1|\geq \eta$ is less than $\frac{2S\e}{\alpha_W\eta^2}$, then, noting that 
$u_\e=v_\e$ in $(-\tau,\tau)$, there exists a point $y(x)$ such that $|x-y(x)|<\frac{2S\e }{\alpha_W\eta^2}$ and $|v_\e(y(x))|<1+\eta$. We now use the embedding of $H^s$ into $C^{0,s-\frac12}$ to deduce that 
\begin{equation}
|v_\e(x)-v_\e(y)|^2\leq [v_\e]^2_s(-\tau,\tau) |x-y(x)|^{2s-1}\leq \e^{1-2s}S \Big(\frac{2S\e }{\alpha_W\eta^2}\Big)^{2s-1}. 
\end{equation}
Then, $v_\e$ is equibounded in $L^\infty(-\sigma,\sigma)$. Since it is equibounded by $1+2\eta$ in the remaining part of $(-3\tau,3\tau)$, it follows that $\|w_\e\|_{L^{\infty}(\mathbb R)}$ is equibounded. Note that alternatively, we can also argue by truncation, since the seminorm $[u]_s$ decreases by truncation.
Hence, we obtain 
\begin{eqnarray*}
\int_{\frac{3\tau}{\e}}^{+\infty}\!\!\int_{-\frac{3\tau}{\e}}^{\frac{3\tau}{\e}} \frac{|w_\e(x)-w_\e(y)|^2}{|x-y|^{1+2s}}\, dx\, dy
&\leq&\frac{1}{2s} \frac{\e^{2s}}{\tau^{2s}}\frac{4\tau}{\e} \|w_\e(x)-1\|_{L^\infty(\mathbb R)}^2 \\
&\leq& C\e^{2s-1}, 
\end{eqnarray*}
which is infinitesimal as $\e\to 0$ since $2s>1$. 
The same argument gives the same bound for the integral in $(\frac{3\tau}{\e},+\infty)\times (-\frac{3\tau}{\e},\frac{3\tau}{\e})$. 

As for the integral in $(\frac{3\tau}{\e},+\infty)\times (-\infty, -\frac{3\tau}{\e})$, recalling that $2s>1$ we have that 
\begin{eqnarray*}
\int_{-\infty}^{-\frac{3\tau}{\e}} \int_{\frac{3\tau}{\e}}^{+\infty}  \frac{|w_\e(x)-w_\e(y)|^2}{|x-y|^{1+2s}}\, dx\, dy&=&4\int_{-\infty}^{-\frac{3\tau}{\e}} \int_{\frac{3\tau}{\e}}^{+\infty}  \frac{1}{|x-y|^{1+2s}}\, dx\, dy\\
&=&\frac{2}{s(2s-1)}\Big(\frac{6\tau}{\e}\Big)^{1-2s}, 
\end{eqnarray*}
which is infinitesimal as $\e\to 0$ since $2s>1$, 
concluding the proof of \eqref{dis-co-1} if $k=0$.

\bigskip
\noindent{\em Conclusion of the proof.} Noting that the argument can be repeated for each jump point $x_i$ of $u$, using \eqref{pp} 
and \eqref{dis-co-2} we obtain that
\begin{eqnarray*}
\liminf_{\e\to 0} F_\e(u_\e)&\ge &\liminf_{\e\to 0} \sum_{i=1}^M F_\e(u_\e;(x_i-3\tau, x_i+3\tau))
\\ & =&\frac1{1+\delta}\Big( \liminf_{\e\to 0} \sum_{i=1}^M F_\e(v_\e;(x_i-3\tau, x_i+3\tau))-M \frac CN\Big)\\
&\ge& \frac1{1+\delta}\Big(\sum_{i=1}^M \liminf_{\e\to 0}  F_\e(v_\e;(x_i-3\tau, x_i+3\tau))- M\frac CN\Big)\\
&\ge& \frac1{1+\delta}\Big(M\widetilde m_{k+s} -M\frac CN -Mc_\sigma\Big),
\end{eqnarray*}
which concludes the proof by the arbitrariness of $\delta>0$ and $N\in\mathbb N$ and recalling that $c_\sigma\to 0$ as $\sigma\to 0$.
\end{proof}

\section{Upper bound and optimal-profile formulas}

We now can conclude the proof of Theorem \ref{main}.

\begin{proposition}
If $u\colon(0,1)\to\{-1,1\}$ has a finite discontinuity set $S(u)$, then there exists a family $u_\e$ such that $u_\e\to u$ in $L^1(0,1)$ and such that $F_\e(u_\e)$ converges to $\widetilde m_{k+s}\#S(u)$.
\end{proposition}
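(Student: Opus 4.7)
The plan is to build a recovery sequence by gluing rescaled near-optimal profiles for $\widetilde m_{k+s}$ at each jump point of $u$. Fix $\delta>0$ and, by \eqref{emmetilde}, choose $T>0$ and $v\in H^{k+s}_{\rm loc}(\mathbb R)\cap C^k(\mathbb R)$ (mollifying near $\pm T$ if needed) such that $v\equiv -1$ on $(-\infty,-T]$, $v\equiv 1$ on $[T,+\infty)$, and
$$
\int_{\mathbb R} W(v)\,dx+[v]^2_{k+s}(\mathbb R)\le \widetilde m_{k+s}+\delta.
$$
Enumerate $S(u)=\{x_1,\dots,x_M\}$ with orientations $\sigma_i\in\{\pm 1\}$ encoding the sign of each jump. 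For $\e$ small enough that the neighbourhoods $N_i:=(x_i-\e T,x_i+\e T)$ are pairwise disjoint and contained in $I$, I define $u_\e(x)=\sigma_i v((x-x_i)/\e)$ on $N_i$ and $u_\e(x)=u(x)$ elsewhere. Since $v^{(\ell)}(\pm T)=0$ for $\ell\ge 1$ and $v(\pm T)=\pm 1$, the gluing is $C^k$ at $x_i\pm\e T$ and $u_\e\in H^{k+s}(I)$; convergence $u_\e\to u$ in $L^1(I)$ follows from $u_\e=u$ off a set of measure $2MT\e\to 0$ and the uniform boundedness of $u_\e$. The change of variables $y=(x-x_i)/\e$ together with $W(\pm 1)=0$ reduce the potential part to $\frac{1}{\e}\int_I W(u_\e)\,dx=M\int_{\mathbb R}W(v)\,dy$, where the $\sigma_i=-1$ case contributes the same integral via the reflection $y\mapsto -y$.

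The crucial step is estimating $\e^{2(k+s)-1}[u_\e]^2_{k+s}(I)$. I split $I\times I$ into three regions: (a) pairs with both $x,y\in N_i$, (b) cross pairs $N_i\times N_j$ with $i\ne j$, and (c) mixed pairs $N_i\times(I\setminus \bigcup_j N_j)$ and its symmetric counterpart. Region (a) rescales exactly to $\sum_i[v]^2_{k+s}(-T,T)$, and region (b) is $o(1)$: since $|x-y|$ is bounded below by the minimum pairwise distance between jump points, the integrand is controlled by $|u_\e^{(k)}(x)|^2+|u_\e^{(k)}(y)|^2$ for $k\ge1$ (by the constant $4$ for $k=0$), giving a total of order $\e^{2s+1}$ after the prefactor. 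The main obstacle is region (c). For $k\ge 1$ the key observation is that $u_\e^{(k)}\equiv 0$ on $I\setminus\bigcup_j N_j$, so the integrand reduces to $|u_\e^{(k)}(x)|^2/|x-y|^{1+2s}$; bounding the $y$-integral by enlarging the outer domain from $I\setminus\bigcup_j N_j$ to $\mathbb R\setminus N_i$ and rescaling yields
$$
2\e^{2(k+s)-1}\sum_i\int_{N_i}\int_{\mathbb R\setminus N_i}\frac{|u_\e^{(k)}(x)|^2}{|x-y|^{1+2s}}\,dy\,dx=\sum_i\bigl([v]^2_{k+s}(\mathbb R)-[v]^2_{k+s}(-T,T)\bigr),
$$
using that $v^{(k)}$ vanishes outside $(-T,T)$. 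Combining (a) and (c) telescopes to $M[v]^2_{k+s}(\mathbb R)$, and adding the $o(1)$ term from (b) gives $\e^{2(k+s)-1}[u_\e]^2_{k+s}(I)\le M[v]^2_{k+s}(\mathbb R)+o(1)$. For $k=0$ the analogous splitting works with $|u_\e(x)-u_\e(y)|^2$ in place of the derivative term; the cross contributions vanish in the limit because the tail integral $\int_\tau^\infty r^{-1-2s}\,dr$ converges thanks to the hypothesis $2s>1$, exactly as in Step~2 of the proof of Proposition~\ref{lo-bo}.

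Summing the potential and non-local estimates gives $\limsup_\e F_\e(u_\e)\le M(\widetilde m_{k+s}+\delta)$. A diagonal argument over $\delta_n\to 0$ produces a single sequence with $\limsup F_\e(u_\e)\le M\widetilde m_{k+s}$, and the matching $\liminf$ from Proposition~\ref{lo-bo} upgrades this to the claimed convergence $F_\e(u_\e)\to \widetilde m_{k+s}\#S(u)$.
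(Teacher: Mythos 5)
Your decomposition of $I\times I$ into (a) diagonal blocks $N_i\times N_i$, (b) off-diagonal blocks $N_i\times N_j$, and (c) mixed blocks $N_i\times(I\setminus\bigcup_j N_j)$ plus its mirror, is genuinely different from the paper's: the paper partitions $I$ into intervals $(x_{i-1},x_i)$ each containing one jump point (so the ``far'' contributions near each $t_i$ are absorbed into a single diagonal term), whereas you partition around the transition layers $N_i$ themselves. For $k\ge 1$ your route is clean and arguably more elegant: the observation that $u_\e^{(k)}\equiv 0$ on $I\setminus\bigcup_j N_j$ kills the integrand on the omitted block $(I\setminus\bigcup_j N_j)^2$, reduces region (c) to a single squared-derivative integral, and the telescoping identity $[v]^2_{k+s}(-T,T)+2\int_{-T}^{T}|v^{(k)}|^2\int_{|b|>T}|a-b|^{-1-2s}db\,da=[v]^2_{k+s}(\mathbb R)$ closes the argument. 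That part is correct.

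For $k=0$, however, there is a genuine gap. Your three regions do \emph{not} cover $I\times I$; the block $(I\setminus\bigcup_j N_j)\times(I\setminus\bigcup_j N_j)$ is missing. For $k\ge 1$ this was harmless because $u_\e^{(k)}$ vanishes there, but for $k=0$ the function $u_\e=u$ takes the values $+1$ and $-1$ on different components of $I\setminus\bigcup_j N_j$, so the integrand $|u_\e(x)-u_\e(y)|^2/|x-y|^{1+2s}$ equals $4/|x-y|^{1+2s}$ whenever $x,y$ sit on opposite sides of some $N_i$. A direct computation (set $a=x_i-x$, $b=y-x_i$ with $a,b\in(\e T,\delta)$) gives
$$
\e^{2s-1}\int\!\!\int_{(I\setminus\bigcup_j N_j)^2}\frac{|u_\e(x)-u_\e(y)|^2}{|x-y|^{1+2s}}\,dx\,dy \;=\; \frac{4M}{s(2s-1)}(2T)^{1-2s}+o(1),
$$
which is $O(1)$, not $o(1)$; indeed it is precisely the missing ``far-far'' piece $M\int_{|a|>T}\int_{|b|>T}|v(a)-v(b)|^2|a-b|^{-1-2s}\,da\,db$ of $M[v]^2_s(\mathbb R)$, and the hypothesis $2s>1$ is what makes it finite rather than divergent. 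Your sentence ``the cross contributions vanish in the limit because the tail integral converges thanks to $2s>1$'' conflates this with the genuinely negligible block (b); as written, the estimate $\e^{2s-1}[u_\e]^2_s(I)\le M[v]^2_s(\mathbb R)+o(1)$ is not established for $k=0$. You would need to add a fourth region, identify $u_\e(x_i+\e b)=v(b)$ for $|b|<\delta/\e$, and show this block converges to the far-far part of the seminorm, at which point the $k=0$ bookkeeping still adds up to $M[v]^2_s(\mathbb R)+o(1)$. The paper avoids this issue entirely because its jump-centered partition builds the far-far contribution into each diagonal term $I^\e_{ii}$.
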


\begin{proof}
Up to a diagonal argument, and taking the lower bound into account, it suffices to prove that for each $\sigma>0$ there exists a sequence $u_\e$ such that $u_\e\to u$ in $L^1(0,1)$ and 
\begin{equation}
\limsup_{\e\to 0}F_\e(u_\e)\le\widetilde m_{k+s}\#S(u)+\sigma.
\end{equation}

\smallskip
Let $\eta>0$ be fixed and let $T>0$ and $v\in H^{k+s}_{\rm loc} (\mathbb R)$ be such that 
$$
\int_{-\infty}^{+\infty} W(v)dx+[v^{(k)}]_{s}(\mathbb R)\le \widetilde m_{k+s}+\eta
$$
and $v(t)=1$ for $t\ge T$, $v(t)=-1$ for $t\le -T$.

If  $N=\#S(u)$ and $S(u)=\{t_1,\ldots, t_N\}$ with $t_i<t_{i+1}$, we define the sets
$$
S_+=\{i\in\{1,\ldots,N\}: u(t_i^+)=1\},\quad S_-=\{i\in\{1,\ldots,N\}: u(t_i^-)=1\}.
$$
Note that one of these two sets is just the set of even $i$ and the other the set of odd $i\in\{1,\ldots,N\}$.
For $\e$ small enough we define $u_\e\colon(0,1)\to \mathbb R$ as 
$$
u_\e(t)=\begin{cases}
v(\frac{t-t_i}\e) &\hbox{ if }|t-t_i|\le\e T, i\in S_+\\
v(\frac{t_i-t}\e) &\hbox{ if }|t-t_i|\le\e T, i\in S_-\\
u(t) &\hbox{ otherwise.}
\end{cases}
$$
The functions thus defined belong to $H^{k+s}(0,1)$. We now estimate $F_\e(u_\e)$.

If we set $x_0=0$, $x_i=\frac{t_i+t_{i+1}}2$ for $i\in\{1,\ldots, N-1\}$ and $x_N=1$, 
then we can write
\begin{eqnarray*}
&&F_\e(u_\e)=\sum_{i=1}^N\frac1\e\int_{t_i-\e T}^{t_i+\e T}W(u_\e)dx 
\\&&\hskip1.5cm+\e^{2(k+s)-1}
\sum_{i=1}^N\sum_{j=1}^N\int_{x_{i-1}}^{x_i}\int_{x_{j-1}}^{x_j}\frac{|u^{(k)}_\e(x)-
u^{(k)}_\e(y)|^2}{|x-y|^{1+2s}}dx\,dy.
\end{eqnarray*}
Changing variables, we have 
$$
\frac1\e\int_{t_i-\e T}^{t_i+\e T}W(u_\e)dt= \int_{-T}^T W(v)dt
$$
and 
$$
\e^{2(k+s)-1}\int_{x_{i-1}}^{x_i}\int_{x_{i-1}}^{x_i}\frac{|u^{(k)}_\e(x)-
u^{(k)}_\e(y)|^2}{|x-y|^{1+2s}}dx\,dy\le [v^{(k)}]_{s}(\mathbb R)
$$
 for all $i\in\{1,\ldots, N\}$.
Hence, the claim follows once we prove that each term
\begin{equation}\label{Iij} I^\e_{ij}:=
\e^{2(k+s)-1}\int_{x_{i-1}}^{x_i}\int_{x_{j-1}}^{x_j}\frac{|u^{(k)}_\e(x)-
u^{(k)}_\e(y)|^2}{|x-y|^{1+2s}}dy\,dx
\end{equation}
in the double sum with $i\neq j$ is asymptotically negligible. After a change of variables we can write
$$
I^\e_{ij}=\int_{\frac{x_{i-1}-t_i}\e}^{\frac{x_i-t_i}\e}\int_{\frac{x_{j-1}-t_j}\e}^{\frac{x_j-t_j}\e} \frac{|v^{(k)}(x)-
v^{(k)}(\pm y)|^2}{\big|y-x+\frac{t_j-t_i}\e \big|^{1+2s}}dy\,dx,
$$
with the plus sign if $i-j$ is even, and with the minus sign otherwise. The term $t_i-t_j$ in the denominator shows that the double integrals with the largest values are those with consecutive indices, then it suffices to treat the case when $i=1$ and $j=2$.  It is not restrictive to suppose that $1\in S_+$. We then have to estimate 
$$
I^\e=\int_{\frac{x_0-t_1}\e}^{\frac{x_1-t_1}\e}\int_{\frac{x_1-t_2}\e}^{\frac{x_2-t_2}\e} \frac{|v^{(k)}(x)-
v^{(k)}(-y)|^2}{\big|y-x+\frac{t_2-t_1}\e \big|^{1+2s}}dy\,dx.
$$
Since $v^{(k)}(x)=v^{(k)}(-y)$ if $x\ge T$ and $y\le -T$ or  if $x\le -T$ and $y\ge T$ (note that if $k\ge 1$ this is true also if $x,y\ge T$ or $x,y\le -T$), it suffices to estimate the double integrals
$$
I^\e_1:=\int_{-T}^{T}\int_{\frac{x_1-t_2}\e}^{\frac{x_2-t_2}\e} \frac{|v^{(k)}(x)-
v^{(k)}(-y)|^2}{\big|y-x+\frac{t_2-t_1}\e \big|^{1+2s}}dy\,dx
$$
and 
$$
I^\e_2:=\int_{T}^{\frac{x_1-t_1}\e}\int_{-T}^{\frac{x_2-t_2}\e} \frac{|v^{(k)}(x)-
v^{(k)}(-y)|^2}{\big|y-x+\frac{t_2-t_1}\e \big|^{1+2s}}dy\,dx,
$$
and the analogous $I_3$ and $I_4$ with the role of $x$ and $y$ interchanged.

We first estimate
\begin{eqnarray}\label{Ie1}\nonumber
I^\e_1&\le& 2\int_{-T}^{T}\int_{\frac{x_1-t_2}\e}^{\frac{x_2-t_2}\e} \frac{|v^{(k)}(x)|^2+
|v^{(k)}(-y)|^2}{\big|y-x+\frac{t_2-t_1}\e \big|^{1+2s}}dy\,dx
\\ \nonumber
&= &2\int_{-T}^{T}|v^{(k)}(x)|^2\Big(\int_{\frac{x_1-t_2}\e}^{\frac{x_2-t_2}\e} +
\frac1{\big|y-x+\frac{t_2-t_1}\e \big|^{1+2s}}dy\Big)\,dx
\\
&&+2\int_{-T}^{T}\Big(\int_{\frac{x_1-t_2}\e}^{\frac{x_2-t_2}\e} \frac{
|v^{(k)}(-y)|^2}{\big|y-x+\frac{t_2-t_1}\e \big|^{1+2s}}dy\Big)\,dx.
\end{eqnarray}
For $|x|\le T$ we have
$$
\int_{\frac{x_1-t_2}\e}^{\frac{x_2-t_2}\e} \frac{
1}{\big|y-x+\frac{t_2-t_1}\e \big|^{1+2s}}dy
\le\frac{\frac{t_2-t_1}\e}{\big|\frac{x_1-t_1}\e-T \big|^{1+2s}} =O(\e^{2s})
$$
as $\e\to 0$, and hence also
\begin{eqnarray*}
&&\hskip-1.5cm\int_{\frac{x_1-t_2}\e}^{\frac{x_2-t_2}\e} \frac{
|v^{(k)}(-y)|^2}{\big|y-x+\frac{t_2-t_1}\e \big|^{1+2s}}dy\ 
\le\ \int_{\frac{x_1-t_2}\e}^{-T} \frac{
1}{\big|y-x+\frac{t_2-t_1}\e \big|^{1+2s}}dy
\\
&&+\int_{-T}^{T} \frac{
|v^{(k)}(-y)|^2}{\big|y-x+\frac{t_2-t_1}\e \big|^{1+2s}}dy
+
\int_{T}^{\frac{x_2-t_2}\e} \frac{1}{\big|y-x+\frac{t_2-t_1}\e \big|^{1+2s}}dy
\\
&\le&
\int_{\frac{x_1-t_2}\e}^{\frac{x_2-t_2}\e} \frac{
1}{\big|y-x+\frac{t_2-t_1}\e \big|^{1+2s}}dy
+\int_{-T}^{T} \frac{
|v^{(k)}(-y)|^2}{\big|y-x+\frac{t_2-t_1}\e \big|^{1+2s}}dy
\\
&\le& \frac{\frac{x_2-x_1}\e}{\big|\frac{x_1-t_1}\e-T \big|^{1+2s}}+\frac1{\big|\frac{t_2-t_1}\e-2T \big|^{1+2s}}\int_{-T}^{T} 
|v^{(k)}(y)|^2dy=o(1)
\end{eqnarray*}
as $\e\to 0$. Using these estimates in \eqref{Ie1} we obtain that $I^\e_1=o(1)$ as $\e\to 0$. Note that the second estimate slightly improves if $k\ge 1$ since $v^{(k)}(y)=0$ if $|y|\ge T$. An analogous argument shows that $I^\e_2=o(1)$ as $\e\to 0$,
so that $I^\e$ tends to $0$ as $\e\to 0$. Note that if $i-j$ is even, the argument above can be followed word for word since we have only used the estimate $|v^{(k)}(x)-v^{(k)}(-y)|^2\le 2(|v^{(k)}(x)|^2+|v^{(k)}(-y)|^2)$, and hence the claim is proved.

We finally note that if $s>\frac12$ then the estimate of $I^\e_{ij}$ defined in \eqref{Iij}  for $i\neq j$ can be achieved by a simpler direct computation, as we have
$$
I^\e_{ij}\le\e^{2s-1} \int_{x_{i-1}}^{x_i}\int_{x_{j-1}}^{x_j}\frac{4\|v^{(k)}\|_\infty^2}{|x-y|^{1+2s}}dy\,dx\le \e^{2s-1}\frac{8\|v^{(k)}\|_\infty^2}{s(2s-1){\delta^{2s-1}}},
$$
where $\delta=\min\{|x_k-x_\ell|: k\neq \ell\}$.
\end{proof} 

The following proposition  concludes the proof of Theorem \ref{main}.

\begin{proposition}
Let $m_{k+s}$ and $\widetilde m_{k+s}$ be defined in \eqref{emme} and \eqref{emmetilde}, repectively. Then $m_{k+s}=\widetilde m_{k+s}$.
\end{proposition}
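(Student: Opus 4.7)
The inequality $m_{k+s} \le \widetilde m_{k+s}$ is immediate: any function admissible for $\widetilde m_{k+s}$ with some parameter $T$ trivially satisfies $\lim_{x \to \pm\infty} v(x) = \pm 1$, so it is admissible for $m_{k+s}$ with the same energy. The content of the statement is the reverse inequality, which I would prove by approximation.

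Fix $\eta > 0$ and pick $v \in H^{k+s}_{\rm loc}(\mathbb R)$ with $\lim_{x \to \pm\infty} v(x) = \pm 1$ and $\int W(v)\,dx + [v^{(k)}]_s^2(\mathbb R) \le m_{k+s} + \eta$. The plan is to construct, for each large $T$, a truncation $v_T$ that coincides with $\pm 1$ outside $[-T-1, T+1]$ and whose energy differs from that of $v$ by $o(1)_{T \to \infty}$. Concretely, fix $\phi \in C^\infty(\mathbb R; [0,1])$ with $\phi \equiv 1$ on $[0, +\infty)$ and $\phi \equiv 0$ on $(-\infty, -1]$, set $\phi_T^-(x) = \phi(x + T + 1)$, $\phi_T^+(x) = \phi(T + 1 - x)$, and define
\begin{equation*}
v_T(x) = \begin{cases} -1 & x \le -T-1,\\ (v(x)+1)\phi_T^-(x) - 1 & -T-1 \le x \le 0,\\ (v(x)-1)\phi_T^+(x) + 1 & 0 \le x \le T+1,\\ 1 & x \ge T+1. \end{cases}
\end{equation*}
Then $v_T$ is admissible for $\widetilde m_{k+s}$ with parameter $T+1$.

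The energy analysis would follow Step 1 of the proof of Proposition \ref{lo-bo}. For the double-well part, $W(v_T)$ vanishes outside $[-T-1, T+1]$ and equals $W(v)$ on $[-T, T]$; on the cut-off strips $[T, T+1]$ and $[-T-1, -T]$, for $T$ large enough so that $||v| - 1| \le \overline \eta$ there, hypotheses (H1) and (H2) give $W(v_T) \le (\gamma_W/\alpha_W) W(v)$ (since $v_T$ lies between $v$ and the corresponding well), and the integrals $\int_T^{T+1} W(v)\,dx$ and $\int_{-T-1}^{-T} W(v)\,dx$ tend to $0$ by absolute continuity. For the fractional seminorm $[v_T^{(k)}]_s^2(\mathbb R)$, I would split $\mathbb R \times \mathbb R$ into pieces according to whether the two variables lie in $(-T, T)$, in the cut-off strips, or in the outer half-lines where $v_T$ is constant. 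The contribution from $(-T, T)^2$ is $[v^{(k)}]_s^2(-T, T)$, converging to $[v^{(k)}]_s^2(\mathbb R)$. The remaining contributions involving the cut-off strips would be estimated exactly as in Cases $1$--$3$ of Step 1 of Proposition \ref{lo-bo}, via the discrete Leibniz inequality \eqref{v+} applied to $v_T$ and the interpolation bound \eqref{interp-3} on the cut-off strips. The tail terms where one variable lies in an outer half-line would be handled as in Step 2 of the same proof.

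The main obstacle, as in the lower bound, is the bookkeeping of the fractional seminorm near the cut-off. Its reduction to $o(1)_{T \to \infty}$ ultimately rests on the fact that $\|v^{(\ell)}\|_{L^2(T, T+1)} \to 0$ for every $\ell \in \{0, \dots, k\}$; this follows from $\|v - 1\|_{L^2(T, T+1)} \to 0$ (a consequence of (H1) and the finiteness of $\int W(v)$), from $[v^{(k)}]_s^2(T, T+1) \to 0$ (absolute continuity of the integral), and from Proposition \ref{interpolazione}. Since there is no small parameter $\e$ to balance here, the argument is a non-scaling simplification of the lower-bound construction; letting $T \to \infty$ and exploiting the arbitrariness of $\eta$ yields $\widetilde m_{k+s} \le m_{k+s}$.
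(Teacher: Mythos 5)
Your proof follows the same high-level strategy as the paper: cut off a near-optimal test function for $m_{k+s}$ to obtain an admissible function for $\widetilde m_{k+s}$, and show the energy increase is asymptotically negligible. The identification that one can bypass the paper's $N$-subdivision (used in Proposition \ref{lo-bo}) in favour of absolute continuity, since here $T\to\infty$, is a sound simplification. However, there is a genuine gap in how you treat the tail term.

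The paper's cut-off (via Proposition \ref{lo-bo} with $\varepsilon=1$, $3\tau=T$) places the transition inside $(\pm\tau,\pm 2\tau)$, so that $v_T^{(k)}$ is supported in $(-2\tau-1,2\tau+1)\subset(-T,T)$ while the tail integral in Step 2 runs over $|x|\ge 3\tau=T$. This leaves a gap of order $T$, and the tail reduces to $\int_{|x|>T}\int_{|y|<2\tau+1}|v_T^{(k)}(y)|^2|x-y|^{-1-2s}$ with a uniformly bounded kernel (of size $\sim\tau^{-2s}$), which is manifestly $o(1)$. Your cut-off, by contrast, places the transition on $[T,T+1]$ and then sets $v_T\equiv 1$ immediately afterwards, so $v_T^{(k)}$ is supported up to $T+1$ and the outer half-line also begins at $T+1$. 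The tail term then reads
\begin{equation*}
\int_{T+1}^{+\infty}\int_{-T-1}^{T+1}\frac{|v_T^{(k)}(y)|^2}{|x-y|^{1+2s}}\,dy\,dx
= \frac{1}{2s}\int_{-T-1}^{T+1}\frac{|v_T^{(k)}(y)|^2}{(T+1-y)^{2s}}\,dy,
\end{equation*}
whose kernel $(T+1-y)^{-2s}$ is singular as $y\to (T+1)^-$. This singularity simply does not arise in Step 2 of Proposition \ref{lo-bo}, so citing that step cannot close the estimate. To handle it you must add an ingredient that does not appear in your sketch: for instance, exploiting that $\phi\in C^\infty$ vanishes to infinite order at the edge of its transition interval, which gives $|\phi_T^{+\,(j)}(y)|\le C(T+1-y)^{m}$ for any $m$ and thus tames $(T+1-y)^{-2s}$ on the cut-off strip, after which the term is controlled by $\sum_{\ell\le k}\|v^{(\ell)}\|^2_{L^2(T,T+1)}\to 0$; you also need the global bound $\|v^{(k)}\|_{L^2(\mathbb R)}<\infty$ (which does follow from (H1), the finiteness of $[v]^2_{k+s}(\mathbb R)$ and Proposition \ref{interpolazione} applied on unit intervals, but is nowhere stated) to kill the contribution from $y$ far from the cut-off. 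Alternatively you could simply adopt the paper's placement of the cut-off well inside $(-T,T)$, which makes the gap explicit and avoids the singular kernel entirely. As stated, the step ``the tail terms \dots would be handled as in Step 2'' is incorrect, and this needs to be repaired. As a minor point, with your stated conventions ($\phi\equiv 1$ on $[0,+\infty)$, $\phi\equiv 0$ on $(-\infty,-1]$, $\phi_T^+(x)=\phi(T+1-x)$) one gets $\phi_T^+\equiv 1$ on $(-\infty,T+1]$, so $v_T$ jumps at $x=T+1$; you presumably intended $\phi\equiv 1$ on $[1,+\infty)$, $\phi\equiv 0$ on $(-\infty,0]$.
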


\begin{proof} By definition $m_{k+s}\le\widetilde m_{k+s}$ since all test functions for the problems defining $\widetilde m_{k+s}$ are test functions for the problem defining 
$m_{k+s}$.

In order to prove the converse inequality, let $v$ be a test function for the problem defining $m_{k+s}$. We can apply the cut-off method in the proof of Proposition~\ref{lo-bo} with $\e=1$ and $u_\e=v$, and $3\tau=T$ arbitrary. The corresponding function $w_\e=w_T$ (which in this case coincides with $v_\e$) does not depend on $\e$ but indeed depends on $T$, and \eqref{dis-co-1} reads
\begin{eqnarray*}\nonumber\label{dis-co-3}
&&\hspace{-1cm}\int_{\mathbb R} W(w_T)\, dx+\int_{\mathbb R}\!\int_{\mathbb R} \frac{|w_T^{(k)}(x)-w_T^{(k)}(y)|^2}{|x-y|^{1+2s}}\, dx\, dy\\
&&\hspace{3cm}\leq 
F_1(v;(-T,T))+ c_\sigma+o(1)
\end{eqnarray*} 
as $T\to+\infty$, with $c_\sigma$ arbitrarily small. Since the right-hand side tends to $F_1(v,\mathbb R)+c_\sigma$ we obtain the desired estimate.
\end{proof}

\section{Correction of the discontinuities at integer exponents}
We now examine the behaviour of the coefficients $m_{k+s}$, showing their degeneracy close to integer points. We then prove that this discontinuity can be overcome and  the functionals $F_\e$, after suitable modifications, can be interpreted as interpolations of corresponding local functionals depending on integer derivatives. 

We first examine the behaviour of the coefficients $m_r=m_{k+s}$ defined in \eqref{emme} with $k=\lfloor r\rfloor$ and $s= r-\lfloor r\rfloor$. The special case of $k=0$ and $s\to 1^-$ has been examined in \cite{PaVi}. In this section we make the technical hypothesis that $W(z)\ge c_1|z|^2 -c_2$, so that functionals $F_\e$ are equicoercive in $L^2$.  

\begin{proposition}\label{continuityofm1} The function $r\mapsto m_{r}$ is a continuous function on $(\frac12,+\infty)\setminus \mathbb N$ and
\begin{equation}\label{limeks}
\lim_{r\to k} m_{r}=  +\infty;
\end{equation}
that is, the coefficients blow up at integer points.
\end{proposition}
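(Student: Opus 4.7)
The plan is to treat the continuity on $(\tfrac12,+\infty)\setminus\mathbb N$ and the blow-up \eqref{limeks} at integer points separately.

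For continuity at a non-integer $r=k+s$ (with $s\in(0,1)$), I would prove upper and lower semicontinuity as $r_n\to r$. Since $s_n\to s\in(0,1)$, we may assume $k_n=k$ eventually. For upper semicontinuity, pick an admissible $v$ for $m_r$ with energy at most $m_r+\eta$, and approximate it by a smooth function with transition localized in a bounded interval, using the cut-off construction of Proposition~\ref{lo-bo}. For such a $v$, the integrand $|v^{(k)}(x)-v^{(k)}(y)|^2|x-y|^{-1-2s_n}$ is dominated uniformly in $n$ by an integrable function (controlled near the diagonal by $C|x-y|^{1-2\bar s}$ for some $\bar s\in(s,1)$, and far from the diagonal by the compact support of $v^{(k)}$), so by dominated convergence $[v^{(k)}]^2_{s_n}\to[v^{(k)}]^2_s$, yielding $\limsup_n m_{r_n}\le m_r+\eta$. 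For lower semicontinuity, take near-minimizers $v_n$ for $m_{r_n}$; from (H1) and $\int W(v_n)\le C$, the set $\{||v_n|-1|>\eta\}$ has uniformly bounded measure, so after translation the transition region is uniformly localized. Proposition~\ref{interpolazione} then provides uniform bounds for $v_n$ in $H^{k+s'}_{\rm loc}(\mathbb R)$ for any $s'<s$, hence a subsequence converges pointwise a.e.\ to an admissible $v$, and Fatou's lemma applied to both $\int W$ and the double integral (with varying exponent $s_n\to s\in(0,1)$) gives $\liminf_n m_{r_n}\ge m_r$.

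For the blow-up \eqref{limeks} I argue by contradiction. Fix $k\in\mathbb N$ with $k\ge 1$ (the case $k=0$ is excluded by $r>\tfrac12$) and suppose $r_n\to k$ with $m_{r_n}\le M$. Writing $r_n=k_n+s_n$, either $k_n=k$ with $s_n\to 0^+$, or $k_n=k-1$ with $s_n\to 1^-$. Choose $v_n$ with $\int W(v_n)+[v_n^{(k_n)}]^2_{s_n}(\mathbb R)\le M+1$. Exactly as in the continuity argument, the transition region of $v_n$ is uniformly localized after translation, and the interpolation techniques of Section~3 yield a subsequence converging in $L^2_{\rm loc}(\mathbb R)$ (indeed in $H^{k-1}_{\rm loc}$ when $k\ge 2$) to some admissible $v$. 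The key observation is that the renormalized seminorm $\frac{s_n(1-s_n)}{2^{1-s_n}}[v_n^{(k_n)}]^2_{s_n}(\mathbb R)$ tends to $0$: its prefactor vanishes while $[v_n^{(k_n)}]^2_{s_n}$ stays bounded. Applying the $\Gamma$-liminf inequality recalled in the introduction (Maz'ya--Shaposhnikova when $s_n\to 0^+$, limit $2\int|v^{(k)}|^2$; Bourgain--Brezis--Mironescu and Ponce when $s_n\to 1^-$, limit $\int|v^{(k_n+1)}|^2=\int|v^{(k)}|^2$) on every bounded interval then forces $v^{(k)}\equiv 0$. Hence $v$ is a polynomial of degree less than $k$; but $\int W(v)<\infty$ forces $v$ to equal $\pm 1$ at infinity, hence to be constant $\pm 1$, contradicting the admissibility $v(\pm\infty)=\pm 1$.

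The main obstacle is the compactness step, which must be strong enough both to preserve the asymptotic boundary conditions $v(\pm\infty)=\pm 1$ in the limit and to apply the $\Gamma$-liminf inequality of Maz'ya--Shaposhnikova or Bourgain--Brezis--Mironescu. The first point relies on a uniform-in-$n$ localization of the transition region (from $\int W(v_n)\le M$ and (H1)) together with the uniform tail decay of $v_n\mp 1$ on plateau intervals furnished by Proposition~\ref{interpolazione}; the second point requires at least $L^2_{\rm loc}$ convergence, which the interpolation machinery, combined with Lemma~\ref{stimaseminorma}, exactly delivers.
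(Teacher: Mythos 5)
Your argument follows essentially the same route as the paper: Fatou/dominated convergence for the $\Gamma$-convergence of the fractional seminorms to establish continuity at non-integer $r$, and the Maz'ya--Shaposhnikova and Bourgain--Brezis--Mironescu/Ponce $\Gamma$-limits combined with the vanishing prefactor to force $v^{(k)}\equiv 0$ in the blow-up, exactly as the paper does. One small imprecision: Proposition~\ref{interpolazione} only yields estimates on integer-order derivatives $\|v_n^{(\ell)}\|_{L^2}$, $\ell\le k$, so it does not by itself give $H^{k+s'}_{\rm loc}$ bounds; the fractional compactness you need for a.e.\ convergence of $v_n^{(k)}$ comes instead from combining the bounded seminorm $[v_n]_{k+s_n}$ with the seminorm-lowering estimate $[u]_{s'}(I)\le |I|^{s_n-s'}[u]_{s_n}(I)$ (invoked in the proof of Lemma~\ref{stimaseminorma}), which is what the paper implicitly relies on.
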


\begin{proof} We set $G_s(u)=[u]^2_s$ and we note that the functionals $G_{s+r}$ $\Gamma$-converge to $G_s$ as $r\to 0$ with respect to the $L^2_{\rm loc}$-convergence for all $s\in(0,1)$. The liminf inequality follows from an application of Fatou's Lemma.
Indeed if $u_r\to u$ in $L^2_{\rm loc}(I)$ with $G_{s+r}(u_r)\le S<+\infty$, with fixed $\delta>0$, letting $\Delta_\delta=\{(x,y): |x-y|\le\delta\}$, for all bounded intervals $I\subset \mathbb R$, we have 
$$
\int_{(I\times I)\setminus \Delta_\delta}\frac{|u^{(k)}(x)- u^{(k)}(y)|^2}{|x-y|^{1+2s}}\,dxdy\le \liminf_{r\to0}\int_{(I\times I)\setminus \Delta_\delta}\frac{|u^{(k)}(x)- u^{(k)}(y)|^2}{|x-y|^{1+2(s+r)}}\,dxdy
$$
$$
\le \liminf_{r\to0}[u_r]^2_{s+r}.
$$
Letting $\delta\to 0$ and using the arbitrariness of $I$ we have that $u\in H^s(I)$ and the lower bound. Conversely, if $u$ is $C^\infty$ and constant for $|x|\ge T$ we have the pointwise convergence $G_{s+r}(u)\to G_s(u)$, which proves the upper bound. 
Finally, we note that, since the argument showing the equi-boundedness of the number of transitions can be applied uniformly as $r\to 0$, up to translations we have that $u_r\to \pm1$ uniformly at $\pm\infty$, so that $u\to\pm1$ at $\pm\infty$. By the fundamental theorem of $\Gamma$-convergence, we then obtain the convergence of the related minimum problems; that is, $m_{s+r}\to m_s$.

Suppose now that, up to subsequences, we have $m_s\le C<+\infty$ as $s\to 1-$.
Then, since by the results of Ponce \cite{ponce} the functionals $(1-s)G_s$ $\Gamma$-converge to $G_{k+1}(u):=\int_\mathbb R |u^{(k+1)}|^2dt$.
If we take a sequence $u_s$ almost realizing the minimum of $m_s$ then, up to translations and subsequences $u_s\to u$ in $L^2_{\rm loc}(\mathbb R)$ we would have $G_{k+1}(u)\le \liminf_{s\to 1^-} G_s(u_s)=0$, so that $u^{(k+1)}=0$ and $u_s$ converges uniformly to a non-trivial $k$-th order polynomial, for which the integral $\int W(u_s)dx$ blows up.

An analogous argument can be used for $s\to 0$ by the $\Gamma$-convergence of $s\,G_s$ $\Gamma$-converge to $2G_{k}$ by \cite{MS}.
\end{proof}

In order to correct the behaviour of $m_{k+s}$ at integer points, for $s\in(0,1)$ we define the functionals
\begin {equation}
F^{k,s}_\e(u)=\frac1\e\int_I W(u(x))dx+\frac{s(1-s)}{2^{1-s}}\e^{2(k+s)-1}[u]^2_{k+s}(I),
\end{equation}
which differs by a multiplicative factor, singular at $s=0$ and $s=1$, in the higher-order singular perturbation.
These functionals are still of the form of $F_\e$, since they can be rewritten as
$$
F^{k,s}_\e(u)=2^{s-1}s(1-s)\Big(\frac1\e\int_I W_{k,s}(u(x))dx+\e^{2(k+s)-1}[u]^2_{k+s}(I)\Big),
$$
with $W_{k,s}(z)= 2^{1-s}\frac{1}{s(1-s)}W(z)$ still satisfying (H1) and (H2), and their $\Gamma$-limit has the coefficient $m_k(s)$ defined in \eqref{emmeks}; that is,
\[
 m_{k}(s) =\inf\bigg\{\int_{\mathbb R} W(v)dx+\tfrac{s(1-s)}{2^{1-s}}[v]^2_{k+s}(\mathbb R) :v\in H^{k+s}_{\rm loc}(\mathbb R), v(\pm\infty)=\pm1\bigg\}.
\]

\begin{proposition}\label{continuityofm2} For $k\in\mathbb N$ and $s\in(0,1)$, let $m_{k}(s)$ be defined by 
\eqref{emmeks}, and let 
\begin{equation}\label{emmek}
m_{k}=\inf\Big\{\int_{\mathbb R} W(v)dx+\int_\mathbb R|v^{(k)}|^2 dx : v\in H^{k}_{\rm loc}(\mathbb R), \lim_{x\to\pm\infty}v(x)=\pm1\Big\}.
\end{equation}
Then $s\mapsto m_k(s)$ is a continuous function and
\begin{equation}\label{limeks}
\lim_{s\to 0^+} m_k(s)= m_k,\qquad \lim_{s\to 1^-} m_k(s)= m_{k+1}.
\end{equation}
In particular, the function $r\mapsto m_{\lfloor r\rfloor}(r-\lfloor r\rfloor) $ is continuous on $(\frac12,+\infty)$.
\end{proposition}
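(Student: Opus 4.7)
The plan is to deduce the statement from the fundamental theorem of $\Gamma$-convergence applied to the family of functionals
\[
H_s(v)=\int_{\mathbb R}W(v)\,dx+\tfrac{s(1-s)}{2^{1-s}}[v]^2_{k+s}(\mathbb R),\qquad s\in(0,1),
\]
extended at the endpoints by $H_0(v)=\int_{\mathbb R}W(v)\,dx+\int_{\mathbb R}|v^{(k)}|^2\,dx$ and $H_1(v)=\int_{\mathbb R}W(v)\,dx+\int_{\mathbb R}|v^{(k+1)}|^2\,dx$, all considered on the constraint class of admissible profiles with $v(\pm\infty)=\pm1$. If one shows that $H_{s_n}$ $\Gamma$-converges to $H_{s_0}$ in $L^2_{\rm loc}(\mathbb R)$ for every $s_n\to s_0 \in [0,1]$ and that almost-minimizers are equi-coercive (up to translations), then continuity of $s\mapsto m_k(s)$ on $(0,1)$ together with the matching limits $m_k(0^+)=m_k$ and $m_k(1^-)=m_{k+1}$ follow from the convergence of minimum values. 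The final assertion about continuity on $(\tfrac12,+\infty)$ is then an immediate consequence of $m_k(1^-)=m_{k+1}=m_{k+1}(0^+)$.

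\smallskip

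Continuity on $(0,1)$ essentially reproduces the argument of Proposition~\ref{continuityofm1}, since the prefactor $\frac{s(1-s)}{2^{1-s}}$ is continuous and strictly positive on $(0,1)$: the liminf inequality comes from Fatou applied off the diagonal $\{|x-y|\ge\delta\}$ and letting $\delta\to 0$, and the recovery sequence is given by any smooth near-minimizer of $H_{s_0}$ which coincides with $\pm 1$ outside a bounded interval, for which $s\mapsto [v]^2_{k+s}(\mathbb R)$ is continuous by dominated convergence.

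\smallskip

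For the limits $s\to 0^+$ and $s\to 1^-$ I would invoke the $\Gamma$-convergence results of Maz'ya--Shaposhnikova \cite{MS} and Ponce \cite{ponce} recalled in the introduction, applied to the function $v^{(k)}$: after renormalization by $\frac{s(1-s)}{2^{1-s}}$ the non-local perturbation $\Gamma$-converges in $L^2_{\rm loc}$ to $\int|v^{(k)}|^2$ as $s\to 0^+$ and to $\int|v^{(k+1)}|^2$ as $s\to 1^-$. The upper bound $\limsup_{s\to 0^+}m_k(s)\le m_k$ (and the analogous bound for $s\to 1^-$) follows by testing with a smooth near-minimizer $v$ of $m_k$ (resp.~$m_{k+1}$) which is $\pm 1$ outside a bounded set, for which the pointwise convergence of the renormalized seminorm is a direct consequence of the cited results. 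For the lower bound, I would take a minimizing sequence $v_s$ of $H_s$ with $v_s(\pm\infty)=\pm1$ and apply the interpolation techniques of Section~3 (in particular, arguments analogous to those in Proposition~\ref{numbound}) to bound uniformly in $s$ the number of transitions between the wells; a subsequential limit $v$ with $v(\pm\infty)=\pm1$ then exists in $L^2_{\rm loc}$ after translation, and the $\Gamma$-liminf inequality of MS/Ponce combined with Fatou on the potential term gives $\liminf_{s\to 0^+}m_k(s)\ge H_0(v)\ge m_k$, and analogously at $s\to 1^-$.

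\smallskip

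The main technical obstacle is verifying that the equi-coerciveness ingredients of Sections~3--4 remain uniformly valid as $s$ approaches the endpoints: the constants in Proposition~\ref{interpolazione}, Lemma~\ref{stimaseminorma} and Proposition~\ref{numbound} depend a priori on $s$ through the exponent $k+s$ and through fractional Sobolev embedding constants, so one must track their dependence and check that they remain controlled as $s\to 0^+$ or $s\to 1^-$. Once this uniformity is in place, the $\Gamma$-convergence machinery applied to $H_s$ closes the argument.
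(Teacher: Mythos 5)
Your proposal is correct and follows essentially the same route as the paper: reduce to the $\Gamma$-convergence argument of Proposition~\ref{continuityofm1} (Fatou off the diagonal for the liminf, pointwise convergence of the seminorm on a smooth near-minimizer for the recovery sequence), invoke the Maz'ya--Shaposhnikova and Ponce $\Gamma$-convergence results for the $s\to 0^+$ and $s\to 1^-$ endpoint limits, and use equi-boundedness of transitions to obtain compactness of almost-minimizers up to translation so that the fundamental theorem of $\Gamma$-convergence applies. The ``main technical obstacle'' you flag at the end --- tracking the $s$-dependence of the interpolation and compactness constants as $s\to 0^+,1^-$ --- is exactly what the paper's proof dispatches with a single unproved assertion (``the argument showing the equi-boundedness of transitions can be applied uniformly''), so you have correctly located where the remaining work in a fully detailed argument would lie.
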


\begin{proof} The continuity in $(0,1)$ follows repeating the corresponding part of the proof of Proposition \ref{continuityofm1}. The continuity at $s=0$ and $s=1$ also follows from the proof of Proposition \ref{continuityofm1}, once we observe that the argument showing the equi-boundedness of transitions can be applied uniformly as $s\to 0^+$ and as $s\to 1^-$ on functions $u_s$ with bounded $F^{k,s}_1(u_s)$, and this guarantees that, up to translations and subsequences, $u_s\to u$ which is a test function for $m_k$ and $m_{k+1}$, respectively. The $\Gamma$-convergence results of \cite{MS} and \cite{ponce} allow us to conclude the proof.
\end{proof}

\noindent{\bf Acknowledgements.}
 This paper is based on work supported by the GNAMPA Project ``Asymptotic analysis of nonlocal variational problems'' funded by INdAM.  
The author is a member of GNAMPA of INdAM.

\bibliographystyle{abbrv}

\bibliography{references}

\begin{thebibliography}{10}

\bibitem{alberti}
G.~Alberti.
\newblock Variational models for phase transitions, an approach via
  {$\Gamma$}-convergence.
\newblock In {\em Calculus of {V}ariations and {P}artial {D}ifferential
  {E}quations ({P}isa, 1996)}, pages 95--114. Springer, Berlin, 2000.

\bibitem{AB}
G.~Alberti and G.~Bellettini.
\newblock A nonlocal anisotropic model for phase transitions. {I}. {T}he
  optimal profile problem.
\newblock {\em Math. Ann.}, 310(3):527--560, 1998.

\bibitem{ABS}
G.~Alberti, G.~Bouchitt\'e, and P.~Seppecher.
\newblock Un r\'esultat de perturbati\-ons singuli\`eres avec la norme
  {$H^{1/2}$}.
\newblock {\em C. R. Acad. Sci. Paris S\'er. I Math.}, 319(4):333--338, 1994.

\bibitem{ABS-2}
G.~Alberti, G.~Bouchitt\'e, and P.~Seppecher.
\newblock Phase transition with the line-tension effect.
\newblock {\em Arch. Rational Mech. Anal.}, 144(1):1--46, 1998.

\bibitem{AT}
L.~Ambrosio and V.~M. Tortorelli.
\newblock Approximation of functional depending on jumps by elliptic functional
  via {$\Gamma$}-convergence.
\newblock {\em Comm. Pure Appl. Math.}, 43(8):999--1036, 1990.

\bibitem{MR1051228}
S.~Baldo.
\newblock Minimal interface criterion for phase transitions in mixtures of
  {C}ahn--{H}illiard fluids.
\newblock {\em Ann. Inst. H. Poincar\'e{} C Anal. Non Lin\'eaire}, 7(2):67--90,
  1990.

\bibitem{MR1286918}
A.~C. Barroso and I.~Fonseca.
\newblock Anisotropic singular perturbations---the vec\-to\-rial case.
\newblock {\em Proc. Roy. Soc. Edinburgh Sect. A}, 124(3):527--571, 1994.

\bibitem{MR1036589}
G.~Bouchitt\'e.
\newblock Singular perturbations of variational problems arising from a
  two-phase transition model.
\newblock {\em Appl. Math. Optim.}, 21(3):289--314, 1990.

\bibitem{BBM}
J.~Bourgain, H.~Brezis, and P.~Mironescu.
\newblock Another look at {S}obolev spaces.
\newblock In {\em Optimal {C}ontrol and {P}artial {D}ifferential {E}quations},
  pages 439--455. IOS, Amsterdam, 2001.

\bibitem{bln}
A.~Braides.
\newblock {\em Approximation of Free-discontinuity Problems}.
\newblock Springer-Verlag, Berlin, 1998.

\bibitem{BCST}
A.~Braides, A.~Causin, M.~Solci, and L.~Truskinovsky.
\newblock Beyond the classical {C}auchy-{B}orn rule.
\newblock {\em Arch. Ration. Mech. Anal.}, 247(6):Paper No. 107, 113, 2023.

\bibitem{BDS}
G.~C. Brusca, D.~Donati, and M.~Solci.
\newblock Higher-order singular perturbation models for phase transitions,
  2024. https://arxiv.org/abs/2402.13626.

\bibitem{CH}
J.~W. Cahn and J.~E. Hilliard.
\newblock Free energy of a nonuniform system. {I}. {I}n\-ter\-fa\-cial free
  energy.
\newblock {\em J. Chem. Phys.}, 28(2):258--267, 1958.

\bibitem{CKP}
V.~Crismale, L.~De~Luca, A.~Kubin, A.~Ninno, and M.~Ponsiglione.
\newblock The variational approach to {$s$}-fractional heat flows and the limit
  cases {$s\to0^+$} and {$s\to1^-$}.
\newblock {\em J. Funct. Anal.}, 284(8):Paper No. 109851, 38, 2023.

\bibitem{MR3748585}
G.~Dal~Maso, I.~Fonseca, and G.~Leoni.
\newblock Asymptotic analysis of second order nonlocal {C}ahn--{H}illiard-type
  functionals.
\newblock {\em Trans. Amer. Math. Soc.}, 370(4):2785--2823, 2018.

\bibitem{DPV}
E.~{Di~Nezza}, G.~Palatucci, and E.~Valdinoci.
\newblock Hitchhiker's guide to the fractional {S}obolev spaces.
\newblock {\em Bull. Sci. Math.}, 136:521--573, 2012.

\bibitem{DipVal}
S.~Dipierro and E.~Valdinoci.
\newblock Some perspectives on (non)local phase transitions and minimal
  surfaces.
\newblock {\em Bull. Math. Sci.}, 13(1):Paper No. 2330001, 77, 2023.

\bibitem{FM}
I.~Fonseca and C.~Mantegazza.
\newblock Second order singular perturbation models for phase transitions.
\newblock {\em SIAM J. Math. Anal.}, 31(5):1121--1143, 2000.

\bibitem{FMu}
I.~Fonseca and S.~M\"uller.
\newblock Quasi-convex integrands and lower semicontinuity in {$L^1$}.
\newblock {\em SIAM J. Math. Anal.}, 23(5):1081--1098, 1992.

\bibitem{gurtin}
M.~E. Gurtin.
\newblock Some results and conjectures in the gradient theory of phase
  transitions.
\newblock In {\em Metastability and Incompletely Posed Problems}, pages
  135--146. Springer, 1987.

\bibitem{Landau}
L.~Landau.
\newblock On the theory of phase transitions.
\newblock {\em Zh. Eksp. Teor. Fiz.}, 7:19--32, 1937.

\bibitem{leofrac}
G.~Leoni.
\newblock {\em A {F}irst {C}ourse in {F}ractional {S}obolev {S}paces}, volume
  229 of {\em Graduate Studies in Mathematics}.
\newblock American Mathematical Society, Providence, RI, 2023.

\bibitem{MS}
V.~Maz'ya and T.~Shaposhnikova.
\newblock On the {B}ourgain, {B}rezis, and {M}ironescu theorem concerning
  limiting embeddings of fractional {S}obolev spaces.
\newblock {\em J. Funct. Anal.}, 195(2):230--238, 2002.

\bibitem{Modica}
L.~Modica.
\newblock The gradient theory of phase transitions and the minimal interface
  criterion.
\newblock {\em Arch. Ration. Mech. Anal.}, 98:123--142, 1987.

\bibitem{MM}
L.~Modica and S.~Mortola.
\newblock Un esempio di {$\Gamma \sp{-}$}-convergenza.
\newblock {\em Boll. Un. Mat. Ital. B (5)}, 14(1):285--299, 1977.

\bibitem{MR1097327}
N.~C. Owen and P.~Sternberg.
\newblock Nonconvex variational problems with an\-iso\-tro\-pic perturbations.
\newblock {\em Nonlinear Anal.}, 16(7-8):705--719, 1991.

\bibitem{PaVi}
G.~Palatucci and S.~Vincini.
\newblock Gamma-convergence for one-dimensional nonlocal phase transition
  energies.
\newblock {\em Matematiche (Catania)}, 75(1):195--220, 2020.

\bibitem{ponce}
A.~C. Ponce.
\newblock A new approach to {S}obolev spaces and connections to
  {$\Gamma$}-con\-ver\-gence.
\newblock {\em Calc. Var. Partial Differential Equations}, 19(3):229--255,
  2004.

\bibitem{SV}
O.~Savin and E.~Valdinoci.
\newblock {$\Gamma$}-convergence for nonlocal phase transitions.
\newblock {\em Ann. Inst. H. Poincar\'e{} C Anal. Non Lin\'eaire},
  29(4):479--500, 2012.

\bibitem{solci}
M.~Solci.
\newblock Local interpolation techniques for higher-order singular
  per\-tur\-ba\-tions of non-convex functionals: free-discontinuity problems,
  2024. https://arxiv.org/abs/2402.10656.

\bibitem{vdW}
J.~van~der Waals.
\newblock The thermodynamic theory of capillarity under the hy\-po\-thesis of a
  continuous variation of density (in {D}utch).
\newblock {\em Verhandel. Ko\-nink. Akad. Weten. Amsterdam (Section 1)}, 1(8),
  1893.

\end{thebibliography}

\end{document}